\newcommand{\no}[1]{#1}
\renewcommand{\no}[1]{}
\renewcommand{\Delta}{\upDelta}}
\title[Determination of Quasilinear Terms for Parabolic Equations]{Simultaneous Stable Determination of Quasilinear terms for Parabolic equations}
\date{\today}
\author{Jason Choy$^1$ \and Yavar Kian$^2$}
\newtheorem{Thm}{Theorem}[section]
\newtheorem{prop}{Proposition}[section]
\newtheorem{lem}{Lemma}[section]
\newtheorem{definition}{Definition}[section]
\theoremstyle{remark}
\newtheorem{remark}{Remark}[section]
\newcommand{\R}{{\mathbb R}}
\numberwithin{equation}{section}
\renewcommand{\leq}{\leqslant}
\renewcommand{\geq}{\geqslant}
\def\epsilon{\varepsilon}
\def\phi {\varphi}
\def\supp{\text{supp}}
\begin{document}

\begin{abstract}
In this work, we consider the inverse problem of simultaneously recovering two classes of quasilinear terms appearing in a parabolic equation from boundary measurements. It is motivated by several industrial and scientific applications, including the problems of heat conduction and population dynamics, and we study the issue of stability. More precisely, we derive simultaneous Lipschitz and H\"older stability estimates for two separate classes of quasilinear terms. The analysis combines different arguments including the linearization technique with a novel construction of singular solutions and properties of solutions of parabolic equations with nonsmooth boundary conditions. These stability results may be useful for deriving the convergence rate of numerical reconstruction schemes.\\

\medskip
\noindent
{\bf  Keywords:} Inverse problems, Stability estimates, Nonlinear PDE, Quasilinear parabolic equations, Heat conduction. \\

\medskip
\noindent
{\bf Mathematics subject classification 2020 :} 35R30, 35K59, 35K20.
\end{abstract}

\maketitle

\renewcommand{\thefootnote}{\fnsymbol{footnote}}
\footnotetext{\hspace*{-5mm} 
\begin{tabular}{@{}r@{}p{13cm}@{}} 
%& Manuscript last updated: \today.
\\
$^1$& Department of Mathematics, The Chinese University of Hong Kong, Shatin, N. T. , Hong Kong; Email: zhchoy@math.cuhk.edu.hk\\
$^2$& Universit\'e de Rouen Normandie, CNRS UMR 6085, Math\'ematiques, 76801 Saint-Etienne du Rouvray, France; Email: yavar.kian@univ-rouen.fr\\
 \end{tabular}}
\section{Introduction}
In this article we consider the following parabolic initial  boundary value problem (IBVP) 
\begin{equation}
\label{IBVP}
    \begin{cases}
\rho(t,u)\partial_tu- \nabla \cdot (\gamma(t,u)A(x)  \nabla u)=0  & \text{in } \Omega\times (0,T):=Q ,
\\
u=\lambda+g &\text{on } \partial\Omega\times (0,T):=\Sigma,\\
u(x,0)=\lambda & x\in\Omega,
\end{cases}
\end{equation}
where  $0<T<\infty$, $\alpha\in(0,1)$, $\lambda \in \mathbb{R}$ is a constant, and $\Omega \subset \R^n$ with $n\geq 2$ is a bounded domain with a $\mathcal{C}^{2+\alpha}$ boundary. $A(x):=(a_{ij}(x))_{1 \leq i,j \leq n}$ is a matrix of real-valued functions with $a_{ij} \in \mathcal{C}^3(\overline{\Omega})$, and it is symmetric and elliptic, i.e., there exists $c>0$ such that
\begin{equation}
\label{ellipticity}
 a_{ij}(x)=a_{ji}(x) , \, \xi^T A(x) \xi \geq  c|\xi|^2,\quad 
 x \in \overline{\Omega}, \ \xi\in \R^n.
\end{equation}
In addition, the nonlinear diffusion and weight terms $\gamma,\rho \in \mathcal{C}^1([0,T];\mathcal{C}^3(\R))$ satisfy 
\begin{equation}
\label{positivity condition of gamma,rho}
    \inf_{t\in[0,T]}\gamma(t,s)\geq m(s) , \, \inf_{t\in[0,T]}\rho(t,s) \geq m(s),\quad s\in \R,
\end{equation}
for some strictly positive continuous function $m:\mathbb{R} \to (0,\infty)$. Moreover, by fixing $S$ to be an arbitrary open and not-empty subset of $\partial\Omega$, we define the partial parabolic Dirichlet-to-Neumann map (DN map in short)
\begin{equation}\label{eqn:defn of N}
\mathcal{N}_{\lambda,\gamma,\rho}: g  \mapsto \gamma(t,u_{\lambda,g}) (A(x)\nabla u_{\lambda,g} \cdot \nu(x))|_{S\times(0,T)} , \end{equation}
where $\nu(x)$ denotes the unit outward normal vector to the boundary $\partial\Omega$ and $u_{\lambda,g}$ is the solution of problem \eqref{IBVP}. Physically, the data 
encapsulated in the map $\mathcal{N}_{\lambda,\gamma,\rho}$ corresponds to measurements of the flux located at $S$ along the time interval $(0,T)$ associated with different boundary sources $\lambda + g$ applied on $\partial\Omega$ and a constant initial condition $u|_{t=0}=\lambda$. In this work, we study the inverse problem of simultaneously determining the quasilinear terms $\gamma$ and $\rho$ from a knowledge of $\mathcal{N}_{\lambda,\gamma,\rho}$, and we look for stability results.

The equation in \eqref{IBVP} is often used for modelling diverse physical phenomena, e.g., nonlinear heat conduction, chemical mixing, and population dynamics. The inverse problem is motivated by different physical and industrial applications in which the goal is to determine the underlying physical law of the system \eqref{IBVP} described by the quasilinear terms $\gamma$ and $\rho$. It is closely related to the problem of identifying thermal properties dependent on temperature and time, including thermal conductivity $\gamma$ and volumetric heat
capacity $\rho$; see \cite{Al,BBC,Schuster} for more details. This includes applications to microstructure adjustment in water cooling processes, where various macroscopic material properties described by the quasilinear terms $\gamma$ and $\rho$ are key targets in customer requirements \cite{Schuster,SKFS}. An in-depth study of the parameters $\gamma$ and $\rho$ is imperative to understand cooling processes on an industrial scale \cite{serizawa2015plate}.

The concerned inverse problem falls into the category of identification of nonlinear terms appearing in a PDE, which has received much interest in the mathematical community. For parabolic equations, this mostly includes results on the unique determination of semilinear terms and their applications in physical phenomena described by reaction diffusion equations \cite{EPS2,Is,Is1,Is2,KLL,KU}. Several works have also been devoted to the unique determination of quasilinear terms similar to the thermal conductivity $\gamma$. This includes earlier works \cite{CD1,CD2} in one space dimension and more recent contributions \cite{EPS2,FKU} in a higher space dimension with measurements given by the parabolic DN map $\mathcal{N}_{\lambda,\gamma,\rho}$, with $S=\partial\Omega$. The determination of quasilinear terms has been extensively investigated for elliptic equations, and without being exhaustive, we mention the pioneering works of \cite{Ca,PR} for heat conduction, but also the works \cite{CFKKU,CLLO,MU,Nu1,Nu2,Su,SuU} on the Calder\'on problem or the minimal surface equation. 

All aforementioned results are uniqueness results. For parabolic equations, we are only aware of the results \cite{CK,COY} devoted to the stable determination of a semilinear term in a parabolic equation. For elliptic equations, in \cite{EPS3} the authors proved uniqueness and stability results associated with the approach of \cite{Ca} and the stable determination of a more general class of quasilinear terms has been recently investigated in \cite{Cho,kian2023lipschitz,Ki24}. To the best of our knowledge, there is no result in the existing literature dealing with the stable determination of quasilinear terms appearing in a parabolic equation from boundary measurements. 

The remainder of this article is organized as follows. In Section \ref{sec:main} we describe the main results and provide relevant discussions. In Section \ref{Forward problem and DN map}, we discuss the well-posedness of the IBVP \eqref{IBVP} and the corresponding linearized problem. We also carefully define the partial parabolic DN map \eqref{eqn:defn of N} and introduce a second partial parabolic DN map \eqref{eqn: defn of linear dn} associated with the linearized problem. In Section \ref{section: construction of special solutions}, we construct novel singular solutions to the linearized problem, which is one of the key ingredients in the proof of our main results. Last, in Sections \ref{sec: proof of gamma} and \ref{sec: proof of rho}, we provide the proof of our main results. Throughout  $C,\tilde{C}$ denotes generic constants which are strictly positive and may change from one line to another.

\section{Main results and discussions}\label{sec:main}
In this section we introduce some preliminary tools and state the main results of this article. We denote by  $\mathcal{C}^{\alpha,\frac{\alpha}{2}}(X)$, with $X = \overline{Q}$ or $X=\overline{\Sigma}$, the H\"older space of elements $f \in \mathcal{C}(X) $ satisfying the condition
$$
[f]_{\alpha,\frac{\alpha}{2}}=\sup\left\{\frac{|f(x,t)-f(y,s)|}{( |x-y|^2+|t-s|)^{\frac{\alpha}{2}}}:\  (x,t),\ (y,s)\in X,\ (x,t)\neq(y,s)\right\}<\infty .
$$
Moreover, we define the H\"older space $\mathcal{C}^{2+\alpha,1+\frac{\alpha}{2}}(X)\subset \mathcal{C}([0,T];\mathcal{C}^2(Y))\cap \,\mathcal{C}^1([0,T];\mathcal{C}(Y))$ (where $Y=\overline{\Omega}$ for $X= \overline{Q}$, and $Y=\partial\Omega$ for $X=\overline{\Sigma}$) to be the set composed of functions $f$ satisfying $\partial_x^\beta f ,\partial_t f\in \mathcal{C}^{\alpha,\frac{\alpha}{2}}(X)$ for $\beta\in\mathbb N^n$ with $|\beta| \leq 2$. The space $\mathcal{C}^{2+\alpha,1+\frac{\alpha}{2}}(X)$ is equipped with the standard norm
$$
    \|f\|_{\mathcal{C}^{2+\alpha,1+\frac{\alpha}{2}}(X)} = \sum_{|\beta|+2\nu\leq 2} \| \partial_x^\beta \partial_t^\nu f\|_{\mathcal{C}(X)} + [ \partial_x^\beta \partial_t^\nu f]_{\alpha,\frac{\alpha}{2}}  .
$$
In addition, for an open subset $S$ of $\partial\Omega$, we define the subspace $J_S$ of $\mathcal{C}^{2+\alpha,1+\frac{\alpha}{2}}(\overline{\Sigma})$ to be 
\begin{equation}
    J_S := \{ g \in \mathcal{C}^{2+\alpha,1+\frac{\alpha}{2}}(\overline{\Sigma}) : \supp (g) \subset S\times (0,T), \,  g\mid_{\partial\Omega \times \{0\}} = 0 , \,  \partial_t g\mid_{\partial\Omega \times \{0\}} = 0 \}.
\end{equation}
We will prove in Propositions \ref{prop: existence of forward problem} and \ref{uniqueness} that for $g\in J_S$ sufficiently small, the problem \eqref{IBVP} admits a unique solution $u_{\lambda,g}\in \mathcal{C}^{2+\alpha,1+\frac{\alpha}{2}}(\overline{Q})$. Thus, the DN map \eqref{eqn:defn of N} is well defined in a suitable neighborhood of zero in $J_S$. Using the Sobolev space $H_0^{1/2,1/2}(S\times(0,T))$ defined in \eqref{eqn:defn of H_0^1/2} below and its dual space $H^{-1/2,-1/2}(S\times(0,T))$, we can state the first main result, which is a Lipschitz stability result for the quasilinear term $\gamma$.
\begin{Thm}
\label{thm:est for gamma}
%Let $\Omega,S,T$ and $A$ be defined as above. 
Suppose that for $j=1,2$, there are maps $\gamma^j,\rho^j \in \mathcal{C}^1([0,T];\mathcal{C}^3(\R)) $ satisfying \eqref{positivity condition of gamma,rho}. Let $\mathcal{N}_\lambda = \mathcal{N}_{\lambda,\gamma^1,\rho^1} - \mathcal{N}_{\lambda,\gamma^2,\rho^2}$, with $\mathcal{N}_{\lambda, \gamma^j,\rho^j}$ being partial parabolic DN maps, cf. \eqref{eqn:defn of N}. Then, for all $R>0$, there exists a constant $C=C(\Omega,S,T,A)>0$ such that 
    \begin{equation}
    \label{est1}
        \|\gamma^1-\gamma^2\|_{L^\infty((0,T)\times(-R,R))} \leq C\sup_{\lambda \in [-R,R]} \sup_{g\in B_1} \limsup_{k \to +\infty} \left(k\left\|\mathcal{N}_\lambda\left(\frac{g}{k}\right)\right\|_{H^{-1/2,-1/2}(S\times(0,T))}\right) <\infty ,
    \end{equation}
    with $B_1 = \{g \in  J_S:\  \|g\|_{H^{1/2,1/2}_0(S\times(0,T))} \leq 1\}$.
\end{Thm}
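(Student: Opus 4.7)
The strategy I would follow is a two-step reduction: linearize \eqref{IBVP} at the constant solution $u\equiv\lambda$, and then exploit the singular solutions built in Section \ref{section: construction of special solutions} to extract $\gamma^1(t,\lambda)-\gamma^2(t,\lambda)$ pointwise from the linearized DN map. The Lipschitz character of the estimate is natural because, after linearization, $\gamma(t,\lambda)$ enters the flux $\gamma(t,\lambda)A\nabla v\cdot\nu$ as a multiplicative prefactor, while $\rho$ is hidden inside the equation (an asymmetry which presumably forces the weaker, H\"older-type statement expected for $\rho$).

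First, relying on the well-posedness and regularity in Propositions \ref{prop: existence of forward problem} and \ref{uniqueness} together with a Taylor expansion of $\gamma^j,\rho^j$ in their second argument, I would prove that for every $g\in J_S$ the rescaled family $v_k^j:=k(u_{\lambda,g/k}^j-\lambda)$ converges in $\mathcal{C}^{2+\alpha,1+\frac{\alpha}{2}}(\overline Q)$ as $k\to\infty$ to the solution $v^j$ of the linearized IBVP
\begin{equation*}
\rho^j(t,\lambda)\partial_t v^j - \gamma^j(t,\lambda)\nabla\cdot(A(x)\nabla v^j)=0\ \text{in }Q,\quad v^j=g\ \text{on }\Sigma,\quad v^j(\cdot,0)=0,
\end{equation*}
and that, as a consequence,
\begin{equation*}
k\,\mathcal{N}_{\lambda,\gamma^j,\rho^j}(g/k)\longrightarrow \Lambda^j_\lambda g := \gamma^j(t,\lambda)\,(A\nabla v^j\cdot\nu)\big|_{S\times(0,T)} \ \text{in }H^{-1/2,-1/2}(S\times(0,T)).
\end{equation*}
Hence the right-hand side of \eqref{est1} is exactly $\sup_{\lambda\in[-R,R]}\|\Lambda^1_\lambda-\Lambda^2_\lambda\|_{B_1\to H^{-1/2,-1/2}}$, and it suffices to prove the linear bound $\|(\gamma^1-\gamma^2)(\cdot,\lambda)\|_{L^\infty(0,T)}\leq C\,\|\Lambda^1_\lambda-\Lambda^2_\lambda\|_{op}$ uniformly for $\lambda\in[-R,R]$; passing to the supremum in $\lambda$ then yields \eqref{est1}.

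Second, I would set up an Alessandrini-type identity. Choosing a test field $w$ that solves the backward adjoint equation $-\partial_t(\rho^1(\cdot,\lambda)w) - \gamma^1(\cdot,\lambda)\nabla\cdot(A\nabla w)=0$ with $w(\cdot,T)=0$ and boundary data $\phi$ supported in $\overline S\times(0,T)$, two integrations by parts together with $v^1-v^2=0$ on $\Sigma$ and at $t=0$ lead to
\begin{equation*}
\int_0^T\!\!\int_S \phi\,(\Lambda^1_\lambda-\Lambda^2_\lambda)g\,d\sigma\,dt = \int_Q \bigl[(\gamma^1-\gamma^2)(t,\lambda)\,A\nabla v^2\cdot\nabla w + (\rho^1-\rho^2)(t,\lambda)\,w\,\partial_t v^2\bigr]\,dx\,dt.
\end{equation*}
The idea is then to feed this identity with the singular solutions of Section \ref{section: construction of special solutions}: families $v^2 = V_{y_0,t_0,h}$ and $w = W_{y_0,t_0,h}$, depending on a small parameter $h>0$ and concentrating with a parabolic scaling at a base point $(y_0,t_0)\in S\times(0,T)$ as $h\to 0$. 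Tuning the scales so that $A\nabla v^2\cdot\nabla w$ acts, after normalization, as a Dirac mass at $(y_0,t_0)$, while $w\,\partial_t v^2$ is of strictly lower order and drops out in the limit, the left-hand side extracts $(\gamma^1-\gamma^2)(t_0,\lambda)$ with a constant depending only on $A,\Omega,S,T$. Varying $(y_0,t_0)$ densely in $S\times(0,T)$ and using the continuity of $\gamma^j$ in $t$ produces the pointwise $L^\infty$-bound at each $\lambda$, and the outer supremum in $\lambda\in[-R,R]$ completes the proof.

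The main obstacle is the balancing act in the singular-solution construction. On one side, the families $V_{y_0,t_0,h},W_{y_0,t_0,h}$ must concentrate sharply enough that the bilinear form isolates the $\gamma$-contribution as a clean pointwise limit and genuinely suppresses the $\rho$-contribution; on the other side, their Dirichlet traces must remain uniformly bounded in $H^{1/2,1/2}_0(S\times(0,T))$, so that the pairing with $\Lambda^1_\lambda-\Lambda^2_\lambda$ is controlled by the operator norm in \eqref{est1} rather than by a stronger dual norm. This matching of parabolic scales for both $V$ and $W$, together with the correct weighting against the $H^{1/2,1/2}_0$-unit ball, is precisely what converts what could a priori be a H\"older estimate into the Lipschitz bound \eqref{est1}, and I expect it to be the most delicate part of the argument.
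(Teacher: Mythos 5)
Your overall architecture matches the paper's: reduce the $\limsup$ expression to the operator norm of $\Lambda_\lambda^1-\Lambda_\lambda^2$ via the Fr\'echet derivative of $\mathcal{N}_\lambda$ at $g=0$ (Proposition \ref{prop: derivative of N is Lambda}), then derive an Alessandrini-type identity pairing a forward singular solution with an adjoint one, and concentrate to extract $(\gamma^1-\gamma^2)(t_0,\lambda)$. The first step and the integral identity are essentially what the paper does.

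The genuine gap is in the key technical step: you propose families ``concentrating with a parabolic scaling'' at $(y_0,t_0)$ and assert that, under that scaling, $w\,\partial_t v^2$ is of strictly lower order than $A\nabla v^2\cdot\nabla w$. Under a true parabolic scaling this is false: if space concentrates at rate $h$ and time at rate $h^2$ (as for the heat kernel), then $\partial_t$ and $\nabla^2$ cost the same, so $w\,\partial_t v^2$ and $\nabla v^2\cdot\nabla w$ scale identically and the $\rho$-contribution does \emph{not} drop out; the argument would then only separate $\gamma$ from $\rho$ up to same-order error, destroying the Lipschitz bound. The paper's construction is deliberately \emph{non}-parabolic and anisotropic: the singular solutions are a time-independent fundamental solution $H(\cdot,y_\tau)$ of the \emph{elliptic} operator, whose pole approaches $\partial\Omega$ at rate $\tau$, multiplied by a time cutoff $\varphi_\tau$ concentrating only at the logarithmic rate $a_\tau=|\ln\tau|^{1/16}$. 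Thus a spatial gradient on the leading singularity costs $\tau^{-1}$ while a time derivative costs only $a_\tau$, which is what makes $\int_Q\rho_\lambda\,\partial_t w^1\,\overline{w}^2=O(a_\tau^2\tau^{4-n})$ negligible against the main term $\int_0^T\gamma_\lambda\varphi_\tau^2\,dt\int_\Omega A\nabla H\cdot\nabla H\,dx\sim\tau^{2-n}$. Relatedly, your requirement that the Dirichlet traces stay uniformly bounded in $H_0^{1/2,1/2}$ is not what happens (here $\|g_\tau\|_{H_0^{1/2,1/2}}\sim\tau^{1-n/2}$ blows up); what is needed, and what the construction achieves, is that $\|g_\tau\|^2_{H_0^{1/2,1/2}}$ blows up at exactly the rate of the main term so that the DN pairing contributes $\|\Lambda_\lambda\|_{op}$ times an $O(1)$ constant after normalization. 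Finally, only the time concentration point $t_0$ needs to vary (together with $\lambda$); the spatial base point $x_0\in S$ can be held fixed, since the identity factorizes as a $t$-integral against $\varphi_\tau^2$ times a spatial energy of $H$.
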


In the second main result, we combine the Lipschitz stable determination of $\gamma$ with a H\"older stability result for the quasilinear term $\rho$.
\begin{Thm}
\label{thm:est of rho}
 %    Let $\Omega,S,T$ be defined as above. 
 Suppose that $A=Id$ and $n\geq 3$, and that for $j=1,2$, there are maps $\gamma^j,\rho^j \in    \mathcal{C}^1([0,T]; \mathcal{C}^3(\R))$ satisfying \eqref{positivity condition of gamma,rho}. Assume also that there exists a strictly positive continuous function $\kappa:\R\to(0,\infty)$ such that
\begin{equation}
 \label{eqn:additional assumption for rho}
    \sup_{t\in [0,T]} \partial_t\rho^j(t,s) \leq \kappa(s),  \;  \quad s\in\R,\ j=1,2, 
\end{equation}
and that for all $\lambda\in\R$ there exists $t_\lambda\in(0,T)$ such that
	\begin{equation}
  \label{cond2} \max_{t\in[0,T]} |\rho^1(t,\lambda)-\rho^2(t,\lambda)|=|\rho^1(t_\lambda,\lambda)-\rho^2(t_\lambda,\lambda)|.\end{equation}
  Fix $\mathcal{N}_\lambda = \mathcal{N}_{\lambda,\gamma^1,\rho^1} - \mathcal{N}_{\lambda,\gamma^2,\rho^2}$, where $\mathcal{N}_{\lambda,\gamma^j,\rho^j}$ are DN maps defined by \eqref{eqn:defn of N}. Then, for all $R>0$, the estimate \eqref{est1} holds, and there exists $C=C(\Omega,S,\kappa,m,R,T)>0$  such that 
    \begin{equation}
    \label{estt}
        \|\rho^1-\rho^2\|_{L^\infty((0,T)\times(-R,R))} \leq C\left(\sup_{\lambda \in [-R,R]} \sup_{g\in B_1} \limsup_{k \to +\infty} \left(k\left\|\mathcal{N}_\lambda\left(\frac{g}{k}\right)\right\|_{H^{-1/2,-1/2}(S\times(0,T))}\right) \right)^{\frac{1}{9}} < \infty.
    \end{equation}
\end{Thm}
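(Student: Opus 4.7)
The estimate \eqref{est1} is immediate from Theorem \ref{thm:est for gamma}, so the core of the work is to establish \eqref{estt}. The overall strategy is: linearize \eqref{IBVP} at $u\equiv\lambda$, produce an integral identity relating the sections $s=\lambda$ of $\rho^1-\rho^2$ and $\gamma^1-\gamma^2$ to the difference of linearized Dirichlet-to-Neumann maps, absorb the $\gamma$-contribution using the Lipschitz bound just obtained, and finally localize at the space-time point $(x_0,t_\lambda)$ (with $x_0\in S$ and $t_\lambda$ given by \eqref{cond2}) by inserting the singular solutions built in Section \ref{section: construction of special solutions}.

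Writing $u_{\lambda,g/k}=\lambda+k^{-1}v_k$ and invoking the well-posedness results of Section \ref{Forward problem and DN map}, $v_k$ converges as $k\to\infty$ to the solution $v^{(j)}$ of the linearized problem
$$
\rho^j(t,\lambda)\partial_t v^{(j)}-\gamma^j(t,\lambda)\Delta v^{(j)}=0\ \text{in }Q,\quad v^{(j)}=g\ \text{on }\Sigma,\quad v^{(j)}(\cdot,0)=0\ \text{in }\Omega,
$$
and $k\mathcal N_\lambda(g/k)$ converges to $(\Lambda^1_\lambda-\Lambda^2_\lambda)g$ in $H^{-1/2,-1/2}(S\times(0,T))$, with $\Lambda^j_\lambda$ the linearized DN map \eqref{eqn: defn of linear dn}. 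Multiplying the PDE for $w:=v^{(1)}-v^{(2)}$ by a solution $\psi$ of the backward adjoint equation
$$
-\rho^1(t,\lambda)\partial_t\psi-\partial_t\rho^1(t,\lambda)\psi-\gamma^1(t,\lambda)\Delta\psi=0\ \text{in }Q,\quad \psi(\cdot,T)=0,
$$
whose well-posedness is ensured by \eqref{eqn:additional assumption for rho}, and integrating by parts yields the identity
$$
\int_Q(\rho^1-\rho^2)(t,\lambda)\psi\,\partial_t v^{(2)}\,dx\,dt+\int_Q(\gamma^1-\gamma^2)(t,\lambda)\nabla v^{(2)}\cdot\nabla\psi\,dx\,dt=\langle(\Lambda^1_\lambda-\Lambda^2_\lambda)g,\psi\rangle_{\Sigma}.
$$
By Theorem \ref{thm:est for gamma} the $\gamma$-integral is controlled in $L^\infty$ by the data, so it can be treated as an error term.

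To isolate $\rho^1-\rho^2$ pointwise I would plug in the singular solutions $v^{(2)}=v^{(2)}_\varepsilon$ and $\psi=\psi_\varepsilon$ of Section \ref{section: construction of special solutions}, essentially fundamental solutions of the rescaled heat operators $\rho^j(t,\lambda)\partial_t-\gamma^j(t,\lambda)\Delta$, suitably reflected across $\partial\Omega$ near $x_0\in S$ and concentrated forward, resp.\ backward, in time at $(x_0,t_\lambda)$; this is where the standing assumptions $A=Id$ and $n\ge 3$ enter. The product $\partial_t v^{(2)}_\varepsilon\,\psi_\varepsilon$ then behaves as an approximation of $\delta_{(x_0,t_\lambda)}$ up to an explicit nonzero prefactor, so in view of \eqref{cond2} the left-hand side of the identity becomes, to leading order, a nonzero multiple of $\|(\rho^1-\rho^2)(\cdot,\lambda)\|_{L^\infty(0,T)}$.

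The last step is a parameter balance. The traces of $v^{(2)}_\varepsilon$ and $\psi_\varepsilon$ on $S\times(0,T)$ together with the residual $\gamma$-term blow up as negative powers of the concentration parameter $\varepsilon$, while the localization error decays as a positive power of $\varepsilon$. Writing $\delta$ for the right-hand side of \eqref{est1}, I expect to arrive at an inequality of the form
$$
\|(\rho^1-\rho^2)(\cdot,\lambda)\|_{L^\infty(0,T)}\le C\varepsilon^{a}+C\varepsilon^{-b}\delta,
$$
with $a,b$ dictated by the parabolic fundamental solution in dimension $n\ge 3$ and the $H^{\pm 1/2,\pm 1/2}$ scalings; optimizing in $\varepsilon$ gives a Hölder rate $\delta^{a/(a+b)}$, and the specific geometry of the construction produces the exponent $1/9$ in \eqref{estt}. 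I expect the main obstacle to be a sharp quantification of the gap between the singular solutions and the true solutions of the time-dependent linearized equations, together with the delicate absorption of the $\gamma$-contribution, which must not degrade the Hölder rate; it is precisely here that the Lipschitz stability of Theorem \ref{thm:est for gamma} becomes indispensable.
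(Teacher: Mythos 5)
Your high-level plan (linearize, pass to the difference of linearized DN maps, write the integral identity with a forward solution and a backward adjoint solution, absorb the $\gamma$-term via Theorem \ref{thm:est for gamma}, concentrate at $(x_0,t_\lambda)$ with $t_\lambda$ from \eqref{cond2}, and optimize the concentration parameter to get a H\"older rate) matches the paper. But there is a genuine gap at the decisive step, namely your claim that $\partial_t v^{(2)}_\varepsilon\,\psi_\varepsilon$ "behaves as an approximation of $\delta_{(x_0,t_\lambda)}$ up to an explicit nonzero prefactor." The $\rho$-term in the identity is $\int_Q \rho_\lambda(t)\,\partial_t w^{1}\,\overline{w}^{2}\,dx\,dt$, and if both singular solutions carry the \emph{same} time profile $\varphi_\tau$ (as the Section \ref{section: construction of special solutions} solutions do, and as symmetric forward/backward parabolic kernels centered at the same $t_\lambda$ effectively would), the time integral is of the form $\int_0^T \rho_\lambda(t)\,\partial_t\varphi_\tau(t)\,\varphi_\tau(t)\,dt=\tfrac12\int_0^T\rho_\lambda\,\partial_t(\varphi_\tau^2)\,dt=-\tfrac12\int_0^T\varphi_\tau^2\,\partial_t\rho_\lambda\,dt$, which recovers $\partial_t\rho_\lambda(t_\lambda)$ rather than $\rho_\lambda(t_\lambda)$: the "prefactor" you need is exactly the quantity that cancels. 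You also misdescribe the Section \ref{section: construction of special solutions} solutions: they are \emph{not} reflected parabolic fundamental solutions but products of a time cutoff with an \emph{elliptic} fundamental solution singular at an exterior point $y_\tau$; the paper deliberately avoids parabolic kernels.

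The paper resolves this by building a \emph{new, asymmetric} pair of singular solutions in Section \ref{sec: proof of rho}: the forward solution uses the time profile $\chi(t)\Phi_\tau(t)$ with $\Phi_\tau(t)=\int_0^t\varphi_\tau(s)\,ds$ (so that $\partial_t w^1\approx \varphi_\tau\,\partial_{x_j}H$), while the adjoint solution uses $\varphi_\tau$ itself; the spatial profile is $\partial_{x_j}H(x,y_\tau)$, summed over $j$, so the leading term becomes $\big(\int_0^T\rho_\lambda\varphi_\tau^2\,dt\big)\int_\Omega|\nabla H|^2\,dx\gtrsim \rho_\lambda(t_0)\,\tau^{2-n}$. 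With $a_\tau=\tau^{-r}$ the error bookkeeping gives $|\rho_\lambda(t_0)|\le C(\tau^r+\eta\tau^{-2}+\tau^{-1-r+n/2}+\tau^{1-r})$, and the choice $r=1/4$, $\tau=\eta^{4/9}$ yields the exponent $1/9$. Your proposal leaves both the antiderivative-in-time device and the choice of $\partial_{x_j}H$ (needed so the main term is comparable to $\int|\nabla H|^2\sim\tau^{2-n}$, the same quantity bounded below in \eqref{eqn:lower bound for gradH}) unaddressed, and without some such asymmetry the argument does not isolate $\rho_\lambda(t_0)$ at leading order.
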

\begin{remark}
    There are several \textit{a priori} conditions that one may impose on the unknown parameter $\rho$ to guarantee the condition \eqref{cond2}. For example, one could impose that for all $\lambda\in\R$ there exists $\epsilon_\lambda\in(0,T/4)$ such that $$\partial_t\rho(t,\lambda)=0,\quad t\in (0,\varepsilon_\lambda) \cup (T-\varepsilon_\lambda,T),$$
 or that  $\rho(s,\lambda)$ is known for $s=0,T$ and $\lambda\in\R$.
\end{remark}

To the best of our knowledge, we obtain in Theorems \ref{thm:est for gamma} and \ref{thm:est of rho} the first results of stable determination of quasilinear terms appearing in a parabolic equation from boundary measurements. Our results provide the simultaneous stable determination of the thermal conductivity $\gamma$ and volumetric heat capacity $\rho$. We derive Lipschitz and H\"older stability estimates while most of existing stability estimates for parabolic equations are of logarithmic type. Such stability estimates can have significant applications in numerical reconstruction where stability estimates can be used for proving the convergence rates of numerical schemes \cite{egger2018tikhonov,werner2019convergence}. We mention also that Theorem \ref{thm:est for gamma} is stated for parabolic equations with a general class of space dependent variable coefficients and all our results are stated with measurements of the flux restricted to an arbitrary open subset of the boundary. Even in terms of uniqueness, the simultaneous recovery of these two class of parameters $\gamma$ and $\rho$ seems to be new.

We observe that the stability estimate \eqref{est1} is
an unconditional Lipschitz one in the sense that it is stated without any \textit{a priori} conditions imposed on the unknown parameter $\gamma$, and the constant $C$ appearing in \eqref{est1} is completely independent of the parameters $\gamma$, $\rho$ and $R$. Such stability estimate differs from most existing stability estimates  for the parabolic equations, and in the context of nonlinear inverse problems, we are only aware of the works \cite{kian2023lipschitz,Ki24} for stability estimates of similar type derived for elliptic equations. 

In contrast to the determination of the conductivity $\gamma$, the stability estimate \eqref{estt} is a conditional H\"older one requiring the \textit{a priori} estimate \eqref{cond2}. Moreover, the constant $C$ appearing in \eqref{estt}
depends on the different parameters $\kappa$, $m$ and $R$. This can be compared with the determination of a semilinear term for elliptic equations of
\cite[Theorem 1.2]{kian2023lipschitz} where the dependency with respect to $R$ comes from the fact that the stable determination of such nonlinear functions must be localized. Despite these restrictions, we note that, since the stability estimate \eqref{est1} is independent of the parameter $\rho$, the stability estimate \eqref{estt} can be combined with \eqref{est1} into the simultaneous stable determination of the parameters $\gamma$ and $\rho$.

Our approach for proving both estimates involves first a linearization argument that we consider along with the well-posedness of problem \eqref{IBVP} with sufficiently small Dirichlet boundary data $ g\in J_S$. Then we introduce the partial parabolic DN map $\Lambda_{\lambda,\gamma,\rho}$, associated with the linearized equation, that we define for less regular Dirichlet boundary data in $H_0^{1/2,1/2}(S\times(0,T))$. Last, we use a novel construction of singular solutions to the linearized equation to prove the stability estimates. Namely, we do not utilize the fundamental solution to parabolic equations, but instead we employ a new class of singular solutions that we built from a fundamental solution to an elliptic equation. In view of the flexibility of our construction of singular solutions, we believe that this approach can be applied to similar class of inverse problems for different classes of evolution PDEs, e.g., time-fractional subdiffusion. 
    
\section{Forward problem and associated Dirichlet-to-Neumann map}

\label{Forward problem and DN map}
In this section, we first show that the IBVP \eqref{IBVP} is well-posed for  boundary data lying in $ B_{\varepsilon_\lambda,S} =  \{g \in J_S : \|g\|_{\mathcal{C}^{2+\alpha,1+\frac{\alpha}{2}}(\overline{\Sigma})} < \varepsilon_{\lambda} \} $, with $\varepsilon_\lambda>0$ dependent on $\lambda\in\R$. Then we introduce the associated partial parabolic DN map $\mathcal{N}_{\lambda,\gamma,\rho}$ and show that its Fr\'echet derivative in a certain direction is equal to $\Lambda_{\lambda,\gamma,\rho}$, the partial parabolic DN map associated with the linearized IBVP \eqref{linearised eqn. assoc. w/ DN map}. Last, we define the Sobolev space $H_0^{1/2,1/2}(S\times(0,T))$ and show that both the linear IBVP \eqref{linearised eqn. assoc. w/ DN map} and its associated DN map $\Lambda_{\lambda,\gamma,\rho}$ remain well defined with less regular Dirichlet boundary data $g \in H_0^{1/2,1/2}(S\times(0,T))$.

We start with the existence of solutions of the IBVP \eqref{IBVP}.
    \begin{prop}
    \label{prop: existence of forward problem}
        Suppose $\gamma$, $\rho$ and $A$ satisfy conditions \eqref{ellipticity} and \eqref{positivity condition of gamma,rho}. Then, for all $\lambda \in \R$, there exists $\varepsilon_\lambda= \varepsilon_\lambda(\Omega,T,\lambda,A)>0$ such that for all $g \in B_{\varepsilon_\lambda,S}$, there exists a solution $u_{\lambda,g} \in \mathcal{C}^{2+\alpha,1+\frac{\alpha}{2}}(\overline{Q}) $ to \eqref{IBVP} satisfying 
 $$
            \|u_{\lambda,g} - \lambda\|_{\mathcal{C}^{2+\alpha,1+\frac{\alpha}{2}}(\overline{Q})} \leq C\|g\|_{\mathcal{C}^{2+\alpha,1+\frac{\alpha}{2}}(\overline{\Sigma})} ,
$$
where $C=C(\Omega,T,A)>0$
    \end{prop}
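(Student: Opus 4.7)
The plan is to apply the implicit function theorem in a H\"older scale after reducing to a problem with homogeneous initial data. First I would substitute $v=u-\lambda$, so that $v$ solves
\begin{equation*}
\rho(t,\lambda+v)\partial_t v-\nabla\cdot(\gamma(t,\lambda+v)A(x)\nabla v)=0 \quad \text{in } Q,
\end{equation*}
with $v=g$ on $\Sigma$ and $v|_{t=0}=0$ in $\Omega$. A solution $u_{\lambda,g}$ with the stated regularity corresponds to a $v\in \mathcal{C}^{2+\alpha,1+\frac{\alpha}{2}}(\overline{Q})$ satisfying the corner compatibility conditions $v|_{\partial\Omega\times\{0\}}=0$ and $\partial_t v|_{\partial\Omega\times\{0\}}=0$, the latter being automatic since $\Delta v =0$ at $t=0$ when $g|_{t=0}=\partial_t g|_{t=0}=0$, i.e.\ when $g\in J_S$.

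Next I would set up the abstract framework. Introduce the closed subspace
\begin{equation*}
X=\{v\in \mathcal{C}^{2+\alpha,1+\frac{\alpha}{2}}(\overline{Q}):v|_{\Omega\times\{0\}}=0,\ \partial_t v|_{\partial\Omega\times\{0\}}=0\}
\end{equation*}
and target space $Y=\mathcal{C}^{\alpha,\frac{\alpha}{2}}(\overline{Q})\times J_S$, and define the nonlinear map $\Phi:X\times J_S\to Y$ by
\begin{equation*}
\Phi(v,g)=\bigl(\rho(t,\lambda+v)\partial_t v-\nabla\cdot(\gamma(t,\lambda+v)A(x)\nabla v),\ v|_\Sigma-g\bigr).
\end{equation*}
Since $\gamma,\rho\in \mathcal{C}^1([0,T];\mathcal{C}^3(\R))$ and $A\in \mathcal{C}^3(\overline{\Omega})$, a direct computation using the Leibniz rule for H\"older norms shows that $\Phi$ is $\mathcal{C}^1$ near $(0,0)$, with $\Phi(0,0)=0$ and partial differential
\begin{equation*}
D_v\Phi(0,0)w=\bigl(\rho(t,\lambda)\partial_t w-\gamma(t,\lambda)\nabla\cdot(A(x)\nabla w),\ w|_\Sigma\bigr).
\end{equation*}
The crux is to establish that $D_v\Phi(0,0):X\to Y$ is a topological isomorphism. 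This is precisely the assertion of the classical Schauder theory for linear parabolic equations with H\"older coefficients (see Ladyzhenskaya--Solonnikov--Ural'tseva, Ch.~IV): since $\gamma(\cdot,\lambda),\rho(\cdot,\lambda)\in \mathcal{C}^1([0,T])$ are bounded below by $m(\lambda)>0$ and $A$ is elliptic with $\mathcal{C}^3$ entries, the problem of finding $w\in X$ solving $\rho(t,\lambda)\partial_t w-\gamma(t,\lambda)\nabla\cdot(A\nabla w)=f$ with $w|_\Sigma=h$ admits a unique solution obeying $\|w\|_X\leq C(\|f\|_{\mathcal{C}^{\alpha,\alpha/2}(\overline{Q})}+\|h\|_{\mathcal{C}^{2+\alpha,1+\alpha/2}(\overline{\Sigma})})$, the required corner compatibility being built into $X$ and $J_S$.

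With the isomorphism secured, the implicit function theorem produces $\varepsilon_\lambda=\varepsilon_\lambda(\Omega,T,\lambda,A)>0$ and a $\mathcal{C}^1$ map $g\mapsto v_g$ on $B_{\varepsilon_\lambda,S}$ with $v_0=0$ and $\Phi(v_g,g)=0$. The linear bound on the differential $Dv_0=-[D_v\Phi(0,0)]^{-1}\circ D_g\Phi(0,0)$, combined with $v_0=0$, gives (after possibly shrinking $\varepsilon_\lambda$) the estimate $\|v_g\|_X\leq C\|g\|_{\mathcal{C}^{2+\alpha,1+\alpha/2}(\overline{\Sigma})}$, where $C$ depends only on $\Omega,T,A$ through the constant in the linear Schauder estimate at $\lambda$. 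Setting $u_{\lambda,g}=\lambda+v_g$ yields the proposition. The main obstacle is verifying the precise functional-analytic setting for the isomorphism step, namely the identification of the corner-compatibility subspace $X$ and the matching of regularity between boundary data, interior inhomogeneity, and the solution; once this is handled by the standard anisotropic Schauder theory, the rest is a mechanical application of the implicit function theorem.
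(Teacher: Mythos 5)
Your proposal follows essentially the same route as the paper: reduce to $v=u-\lambda$ with zero initial data, set up a $\mathcal{C}^1$ map on H\"older spaces encoding the PDE and the boundary condition, invert the linearization at $(0,0)$ via the anisotropic Schauder theory of Ladyzhenskaya--Solonnikov--Ural'tseva, and conclude by the implicit function theorem together with $v_0=0$ to get the Lipschitz bound. One technical point needs fixing: your target space $Y=\mathcal{C}^{\alpha,\frac{\alpha}{2}}(\overline{Q})\times J_S$ is too large for $D_v\Phi(0,0)$ to be surjective. Indeed, for $w\in X$ one has $w(\cdot,0)\equiv 0$ on $\overline{\Omega}$, hence $\nabla\cdot(A\nabla w)|_{t=0}\equiv 0$, and $\partial_t w|_{\partial\Omega\times\{0\}}=0$, so the first component of $D_v\Phi(0,0)w$ necessarily vanishes on $\partial\Omega\times\{0\}$; the isomorphism therefore only holds onto $\{F\in\mathcal{C}^{\alpha,\frac{\alpha}{2}}(\overline{Q}):F|_{\partial\Omega\times\{0\}}=0\}\times J_S$ (this corner compatibility is exactly what the cited Schauder existence theorem requires of the inhomogeneity). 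The paper builds this restriction into its target space $\mathcal{H}_3\times\mathcal{H}_1$, and since the nonlinear map $\Phi$ itself takes values in this smaller space, the adjustment costs nothing; with it, your argument goes through as written.
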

    \begin{proof}
   We follow the approach in \cite[Proposition 2.1]{kian2023lipschitz}. First note that if $v$ solves the IBVP 
       \begin{equation}
       \label{`homogenous' IBVP}
            \begin{cases}
\rho(t,v+\lambda)\partial_tv- \nabla \cdot (\gamma(t,v+\lambda)A(x)  \nabla v)=0  & \text{in } Q ,
\\
v=g &\text{on } \Sigma,\\
v(x,0)=0 & x\in\Omega,
\end{cases}
       \end{equation}
then $u_{\lambda,g}:= v + \lambda$ solves \eqref{IBVP}. So it suffices to show that for $g \in B_{\varepsilon_\lambda,S}$, there exists $v \in \mathcal{C}^{2+\alpha,1+\frac{\alpha}{2}}(\overline{Q})$ that solves \eqref{`homogenous' IBVP} and satisfies
\begin{equation}
\label{ineq. for `homogenous' IBVP}
    \|v\|_{\mathcal{C}^{2+\alpha,1+\frac{\alpha}{2}}(\overline{Q})} \leq C \|g\|_{\mathcal{C}^{2+\alpha,1+\frac{\alpha}{2}}(\overline{\Sigma})} .
\end{equation}
Consider the function spaces
$$
\begin{aligned}
   &  \mathcal{H}_1  = \{h \in \mathcal{C}^{2+\alpha,1+\frac{\alpha}{2}}(\overline{\Sigma}) : h|_{\partial\Omega\times\{0\}} = 0, \, \partial_t h |_{\partial\Omega\times\{0\}} = 0\}, \\
    & \mathcal{H}_2 = \{w\in \mathcal{C}^{2+\alpha,1+\frac{\alpha}{2}}(\overline{Q}) : w|_{\overline{\Omega}\times\{0\}} = 0, \, \partial_t w |_{\partial\Omega\times\{0\}} = 0\}, \\
   & \mathcal{H}_3 = \{F \in \mathcal{C}^{\alpha,\frac{\alpha}{2}}(\overline{Q}) : F\mid_{\partial\Omega \times \{0\}} = 0 \},
\end{aligned}
$$
noting that $J_S \subset \mathcal{H}_1$. We define a mapping $\mathcal{G}:\mathcal{H}_1 \times \mathcal{H}_2 \to \mathcal{H}_3 \times \mathcal{H}_1$ by 
$$
    \mathcal{G}(h,w) = (\rho(t,w+\lambda)\partial_tw- \nabla \cdot (\gamma(t,w+\lambda)A(x)  \nabla w), w|_{\overline{\Sigma}}-h) .
$$
By the  regularity conditions imposed on $\gamma,\rho$ and $A$, the map $\mathcal{G}$ is $\mathcal{C}^1$ in the sense of Fr\'echet derivatives, and moreover for $\tilde{w}\in \mathcal{H}_2$,
$$
    \partial_w\mathcal{G}(0,0)\tilde{w} = (\rho(t,\lambda)\partial_t \tilde{w} - \gamma(t,\lambda) \nabla\cdot (A(x)\nabla\tilde{w}), \tilde{w}|_{\partial\Omega}).
$$
Note that $\mathcal{G}(0,0) = (0,0)$ and we aim to apply the implicit function theorem. To this end, we show that the map $\partial_w\mathcal{G}(0,0):\mathcal{H}_2 \to \mathcal{H}_3 \times \mathcal{H}_1$ is an isomorphism. Consider arbitrary $(F,h) \in  \mathcal{H}_3 \times \mathcal{H}_1$ and the linear IBVP
\begin{equation}
\label{linearised problem}
            \begin{cases}
\rho(t,\lambda)\partial_tw- \gamma(t,\lambda)\nabla \cdot (A(x)  \nabla w)=F  & \text{in } Q ,
\\
w=h &\text{on } \Sigma,\\
w(x,0)=0 & x\in\Omega.
\end{cases}
\end{equation}
By the regularity of $F,h$ and the compatibility conditions on $h$ and \cite[p. 320, Theorem 5.3]{ladyzhenskaia1968linear}, we deduce that the problem \eqref{linearised problem} has a unique solution $w \in \mathcal{H}_2$, and moreover it depends continuously on $(F,h)$. Thus, $\partial_w\mathcal{G}(0,0)$ is indeed an isomorphism, and the implicit function theorem yields the existence of $\varepsilon_\lambda>0$ and a $\mathcal{C}^1$ map $\varphi:B_{\varepsilon_\lambda,S} \to \mathcal{H}_2$,
%$B_{\varepsilon_\lambda}=\{f\in\mathcal H_1:\ \|f\|_{\mathcal{C}^{2+\alpha,1+\frac{\alpha}{2}}(\overline{\Sigma})}<\epsilon_\lambda\}$
such that for $g \in B_{\varepsilon_\lambda,S}$, we have $\mathcal{G}(g,\varphi(g)) = (0,0)$ and $\varphi(0)=0$. Then $v=\varphi(g)$ solves \eqref{`homogenous' IBVP} and, since $\varphi$ is $\mathcal{C}^1$ with $\varphi(0)=0$, we may apply the mean value theorem to obtain the inequality \eqref{ineq. for `homogenous' IBVP}. This concludes the proof of the proposition    .
\end{proof}

    The next result gives the uniqueness of solutions of problem \eqref{IBVP}.
\begin{prop}\label{uniqueness}
    Suppose $\gamma$, $\rho$ and $A$ satisfy conditions \eqref{ellipticity} and \eqref{positivity condition of gamma,rho}, and let $\varepsilon_\lambda>0$ be as in the conclusion of Proposition \ref{prop: existence of forward problem}. Then, for any $g \in B_{\varepsilon_\lambda,S}$, the IBVP \eqref{IBVP} has a unique solution $u_{\lambda,g} \in \mathcal{C}^{2+\alpha,1+\frac{\alpha}{2}}(\overline{Q}) $.
\end{prop}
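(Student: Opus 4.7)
The plan is to use the standard uniqueness argument for quasilinear parabolic equations: suppose $u_1,u_2\in\mathcal{C}^{2+\alpha,1+\frac{\alpha}{2}}(\overline{Q})$ are two solutions of \eqref{IBVP} associated with the same data $\lambda,g$, set $w:=u_1-u_2$, derive a linear parabolic IBVP for $w$ with zero boundary and initial values, and invoke classical uniqueness theory for linear parabolic equations to conclude that $w\equiv 0$.

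To obtain the equation for $w$, I would subtract the PDE satisfied by $u_2$ from the one satisfied by $u_1$ and exploit the mean value identities
$$\rho(t,u_1)-\rho(t,u_2)=\tilde{\rho}(x,t)\,w,\qquad \gamma(t,u_1)-\gamma(t,u_2)=\tilde{\gamma}(x,t)\,w,$$
with $\tilde{\rho}(x,t):=\int_0^1(\partial_s\rho)(t,u_2+sw)\,ds$ and similarly for $\tilde{\gamma}$. Expanding $\nabla\cdot\bigl(\tilde{\gamma}w\,A(x)\nabla u_2\bigr)$ by the product rule, one finds that $w$ solves
\begin{equation*}
\begin{cases}
\rho(t,u_1)\partial_tw-\nabla\cdot\bigl(\gamma(t,u_1)A(x)\nabla w\bigr)+c(x,t)\cdot\nabla w+b(x,t)w=0 & \text{in }Q,\\
w=0 & \text{on }\Sigma,\\
w(\cdot,0)=0 & \text{in }\Omega,
\end{cases}
\end{equation*}
where $c(x,t)=-\tilde{\gamma}(x,t)A(x)\nabla u_2$ and $b(x,t)=\tilde{\rho}\,\partial_tu_2-\nabla\tilde{\gamma}\cdot A(x)\nabla u_2-\tilde{\gamma}\,\nabla\cdot(A(x)\nabla u_2)$.

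Since $u_1,u_2\in\mathcal{C}^{2+\alpha,1+\frac{\alpha}{2}}(\overline{Q})$, $a_{ij}\in\mathcal{C}^3(\overline{\Omega})$ and $\rho,\gamma\in\mathcal{C}^1([0,T];\mathcal{C}^3(\R))$, the coefficients $b,c$ belong to $\mathcal{C}^{\alpha,\frac{\alpha}{2}}(\overline{Q})$; in particular, $\nabla\tilde{\gamma}$ is handled by differentiating the mean value integral under the integral sign, which is legitimate thanks to the $\mathcal{C}^3$-regularity of $\gamma$ in the $u$-variable. Moreover, condition \eqref{positivity condition of gamma,rho} combined with the boundedness of $u_1$ on $\overline{Q}$ yields $\rho(t,u_1(x,t)),\gamma(t,u_1(x,t))\geq m_0>0$ for some $m_0$, so together with \eqref{ellipticity} the leading part is uniformly parabolic. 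The resulting linear IBVP with H\"older coefficients, vanishing data, and uniformly positive coefficient of $\partial_tw$ admits only the trivial solution, either by direct appeal to \cite[Chapter IV, Theorem 5.3]{ladyzhenskaia1968linear} (already invoked in Proposition \ref{prop: existence of forward problem}), or by an energy estimate obtained from testing against $w$ and closing via Gr\"onwall's inequality. Hence $w\equiv 0$ on $\overline{Q}$ and $u_1=u_2$. The only genuinely technical point is verifying the H\"older regularity of the lower order coefficients $b$ and $c$, especially of $\nabla\tilde{\gamma}$; once this is in hand, the conclusion is immediate from linear parabolic theory.
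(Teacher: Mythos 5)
Your proposal is correct and follows essentially the same route as the paper: both subtract the two equations, use the fundamental theorem of calculus to factor the differences $\rho(t,u_1)-\rho(t,u_2)$ and $\gamma(t,u_1)-\gamma(t,u_2)$ as $w$ times Hölder-continuous integral coefficients, reduce to a linear uniformly parabolic IBVP for $w$ with vanishing data and $\mathcal{C}^{\alpha,\frac{\alpha}{2}}$ lower-order coefficients, and conclude $w\equiv 0$ by the same theorem of Ladyzhenskaya--Solonnikov--Ural'tseva. The coefficient bookkeeping you give matches the paper's $q$, $\tilde q$, $\mathcal{B}$ after regrouping, so nothing further is needed.
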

\begin{proof}
   Assume that $u_1,u_2\in \mathcal{C}^{2+\alpha,1+\frac{\alpha}{2}}(\overline{Q})$ are both solutions of \eqref{IBVP}. Then in $Q$, we have 
    $$
        \rho(t,u_1)\partial_tu_1- \nabla \cdot (\gamma(t,u_1)A(x)  \nabla u_1) - \rho(t,u_2)\partial_tu_2 + \nabla \cdot  (\gamma(t,u_2)A(x) \nabla u_2) =0 .
$$
    Let $u = u_1 - u_2$. We first consider the terms containing $\rho$, and denote the rest by $$\zeta:=- \nabla \cdot (\gamma(t,u_1)A(x)  \nabla u_1)  + \nabla \cdot  (\gamma(t,u_2)A(x) \nabla u_2).$$ By the fundamental theorem of calculus, we get 
$$
    \begin{aligned}
        \rho(t,u_1)\partial_t u +  \zeta &= -\rho(t,u_1)\partial_t u_2 + \rho(t,u_2)\partial_t u_2 \\
        &= -\partial_t u_2(u_1-u_2) \int_0^1 \partial\rho(t,su_1 + (1-s)u_2) ds,
    \end{aligned}
$$
where $\partial\rho$ indicates the partial derivative of $\rho$ in the second argument. Then we obtain 
$$
    \rho(t,u_1) \partial_t u + qu + \zeta = 0 ,
$$
with $q := (\partial_t u_2) \int_0^1 \partial\rho(t,su_1 + (1-s)u_2) ds \in \mathcal{C}^{\alpha,\frac{\alpha}{2}}(\overline{Q})$. Next, let  $\xi := \rho(t,u_1) \partial_t u + qu$. Similarly, by adding and subtracting $\nabla\cdot(\gamma(t,u_1)A(x)\nabla u_2)$, we have
\begin{align*}
    &\quad -\nabla \cdot (\gamma(t,u_1)A(x)  \nabla u) + \xi\\
    & = \nabla \cdot (\gamma(t,u_1)A(x) \nabla u_2) - \nabla \cdot (\gamma(t,u_2)A(x) \nabla u_2) 
    = \partial_{x_i} ([\gamma(t,u_1)-\gamma(t,u_2)]a_{ij}\partial_{x_j}u_2) \\
    &= (\gamma(t,u_1)-\gamma(t,u_2))\partial_{x_i}(a_{ij}\partial_{x_j}u_2) + \partial_{x_i}(\gamma(t,u_1)-\gamma(t,u_2))a_{ij}\partial_{x_j} u_2
    =: B + \tilde{B},
\end{align*}
where repeated indices indicate summation. By the fundamental theorem of calculus, we have
$$
    B = \partial_{x_i}(a_{ij}\partial_{x_j}u_2) (u_1-u_2) \int_0^1 \partial\gamma(t,su_1+(1-s)u_2) ds 
    ,
$$
with $\partial \gamma$ indicating the partial derivative of $\gamma$ in the second argument. Analogously, we have
$$
    \tilde{B} = a_{ij}\partial_{x_j}u_2\partial_{x_i}\left( (u_1-u_2) \int_0^1 \partial\gamma(t,su_1+(1-s)u_2) ds \right)   .
$$
Thus, $B+ \tilde{B}$ takes the form $-\mathcal{B}\cdot \nabla u - \tilde{q}u$ with $\mathcal{B},\tilde{q} \in \mathcal{C}^{\alpha,\frac{\alpha}{2}}(\overline{Q})$, and in $Q$, $u$ satisfies
$$
    \rho(t,u_1)\partial_t u - \nabla \cdot (\gamma(t,u_1)A(x) \nabla u) + \mathcal{B}\cdot \nabla u + (q+\tilde{q})u = 0.
$$
Considering also the boundary conditions of $u_1$ and $u_2$, it follows $u\in \mathcal{C}^{2+\alpha,1+\frac{\alpha}{2}}(\overline{Q})$ solves
$$
    \begin{cases}
 \rho(t,u_1)\partial_t u - \nabla \cdot ( \gamma(t,u_1)A(x)\nabla u) + \mathcal{B}\cdot \nabla u + (q+\tilde{q})u = 0  & \text{in } Q ,
\\
u=0 &\text{on } \Sigma,\\
u(x,0)=0 & x\in\Omega.
\end{cases}
$$
By \cite[p. 320, Theorem 5.3]{ladyzhenskaia1968linear}, $u \equiv 0$ is the unique solution to this IBVP, and the result follows. 
\end{proof}

Using Propositions \ref{prop: existence of forward problem} and \ref{uniqueness}, we can define the partial parabolic DN map $\mathcal{N}_{\lambda,\gamma,\rho}$ associated with the IBVP \eqref{IBVP}. We also define the partial parabolic DN map $\Lambda_{\lambda,\gamma,\rho}$ associated with the linearized problem and we connect both maps.
\begin{definition}
    For any $\lambda \in \R$ and for any open subset $S \subset \partial \Omega$, we define the partial parabolic DN map associated with the IBVP \eqref{IBVP} to be the map $\mathcal{N}_{\lambda,\gamma,\rho}:B_{\varepsilon_\lambda,S} \to \mathcal{C}(\overline{S} \times [0,T])$: 
    \begin{equation}
    \label{Definition of DN map for nonlinear}
        \mathcal{N}_{\lambda,\gamma,\rho}g  = \gamma(t,u_{\lambda,g}) (A(x)\nabla u_{\lambda,g} \cdot \nu(x))|_{S\times(0,T)} ,
    \end{equation}
    where $\varepsilon_\lambda$ is as in Proposition \ref{prop: existence of forward problem}, %$\nu$ is the unit outward normal vector to $\partial\Omega$, 
    and $u_{\lambda,g}\in \mathcal{C}^{2+\alpha,1+\frac{\alpha}{2}}(\overline{Q})$ is the solution of \eqref{IBVP}. %  whose uniqueness and existence is guaranteed  by. 
\end{definition}
\begin{definition}
  For $\lambda \in \R$ and $S \subset \partial\Omega$, and for $g \in J_S$, consider the linear IBVP
\begin{equation}
\label{linearised eqn. assoc. w/ DN map}
            \begin{cases}
\rho(t,\lambda)\partial_tw- \gamma(t,\lambda)\nabla \cdot (A(x)  \nabla w)=0  & \text{in } Q ,
\\
w=g &\text{on } \Sigma,\\
w(x,0)=0 & x\in\Omega.
\end{cases}
\end{equation}
We denote the unique solution of \eqref{linearised eqn. assoc. w/ DN map} by $w_{\lambda,g}$, and define the partial parabolic DN map $\Lambda_{\lambda,\gamma,\rho}: J_S \to L^2(S\times(0,T))$ associated with \eqref{linearised eqn. assoc. w/ DN map} to be a bounded linear map given by
\begin{equation}
\label{eqn: defn of linear dn}
    \Lambda_{\lambda,\gamma,\rho}g = \gamma(t,\lambda) (A(x)\nabla w_{\lambda,g} \cdot \nu(x))|_{S\times(0,T)} .
\end{equation}
\end{definition} 
\begin{prop}
\label{prop: derivative of N is Lambda}
    For any $g \in J_S$, there holds 
    \begin{equation}
    \label{derivative of N is Lambda}
    \partial_s \mathcal{N}_{\lambda,\gamma,\rho}( sg) \mid_{s=0} =  \Lambda_{\lambda,\gamma,\rho}g ,
    \end{equation}
		 with $\partial_s$ denoting the Fr\'echet derivative of maps from $J_S$ to $L^2(S\times(0,T))$.
\end{prop}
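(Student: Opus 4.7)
The plan is to reduce the statement to a chain rule computation by factoring $\mathcal{N}_{\lambda,\gamma,\rho}$ through the implicit function $\varphi$ constructed in the proof of Proposition \ref{prop: existence of forward problem}. Recall that, for $g \in B_{\varepsilon_\lambda,S}$, we have $u_{\lambda,g} = \lambda + \varphi(g)$, where $\varphi : B_{\varepsilon_\lambda,S} \to \mathcal{H}_2$ is of class $\mathcal{C}^1$ with $\varphi(0)=0$. Defining the boundary flux map
$$H(v) := \gamma(t,v+\lambda)\bigl(A(x)\nabla v \cdot \nu(x)\bigr)\big|_{S\times(0,T)}, \qquad v \in \mathcal{H}_2,$$
the regularity of $\gamma$ and $A$ together with the continuity of the trace $v \mapsto A\nabla v \cdot \nu|_{S\times(0,T)}$ from $\mathcal{H}_2$ to $\mathcal{C}(\overline{S}\times[0,T])$ (hence into $L^2(S\times(0,T))$) ensure that $H$ is $\mathcal{C}^1$. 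Thus $s \mapsto \mathcal{N}_{\lambda,\gamma,\rho}(sg) = H(\varphi(sg))$ is $\mathcal{C}^1$ near $s=0$.

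Next, I would compute $w := \partial_s \varphi(sg)|_{s=0} = D\varphi(0)g \in \mathcal{H}_2$ via the implicit function theorem. Differentiating the identity $\mathcal{G}(sg,\varphi(sg)) = (0,0)$ at $s=0$ and using that $\varphi(0)=0$, one obtains
$$\partial_h\mathcal{G}(0,0)g + \partial_w\mathcal{G}(0,0)\,w = (0,0).$$
Since $\partial_h\mathcal{G}(0,0)g = (0,-g)$ and $\partial_w\mathcal{G}(0,0)\tilde{w} = (\rho(t,\lambda)\partial_t\tilde{w} - \gamma(t,\lambda)\nabla\cdot(A\nabla\tilde{w}),\tilde{w}|_\Sigma)$, this system forces $w$ to satisfy
$$\rho(t,\lambda)\partial_t w - \gamma(t,\lambda)\nabla\cdot(A(x)\nabla w) = 0 \text{ in } Q,\quad w=g \text{ on } \Sigma,\quad w(\cdot,0)=0 \text{ in } \Omega,$$
which is exactly \eqref{linearised eqn. assoc. w/ DN map}. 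By the uniqueness statement for this linear IBVP (Theorem 5.3 of \cite{ladyzhenskaia1968linear} cited earlier), we conclude $w = w_{\lambda,g}$.

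Finally, by the chain rule,
$$\partial_s\mathcal{N}_{\lambda,\gamma,\rho}(sg)\big|_{s=0} = DH(\varphi(0))\,D\varphi(0)g = DH(0)\,w_{\lambda,g}.$$
Computing $DH(0)\tilde{v}$ directly, the term involving $\partial_s\gamma(t,\lambda)\tilde{v}\,(A\nabla v\cdot\nu)$ evaluated at $v=0$ vanishes because $\nabla v = 0$ there, leaving
$$DH(0)\tilde{v} = \gamma(t,\lambda)\bigl(A(x)\nabla\tilde{v}\cdot\nu(x)\bigr)\big|_{S\times(0,T)}.$$
Applying this to $\tilde{v} = w_{\lambda,g}$ yields precisely $\Lambda_{\lambda,\gamma,\rho}g$, which proves \eqref{derivative of N is Lambda}.

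The only non-mechanical point is to track carefully the topologies: one must verify that $\varphi$ is $\mathcal{C}^1$ into the correct parabolic Hölder space so that the trace and gradient operations composing $H$ land continuously in $L^2(S\times(0,T))$, so that the Fréchet derivative makes sense in the space in which $\Lambda_{\lambda,\gamma,\rho}$ is defined. This is already built into Proposition \ref{prop: existence of forward problem} and the definition of $\mathcal{H}_2$, so no additional work beyond invoking standard embeddings is required.
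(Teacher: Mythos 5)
Your proposal is correct and follows essentially the same route as the paper: both identify $\partial_s u_{\lambda,sg}|_{s=0}$ (equivalently $D\varphi(0)g$) as the solution of the linearized IBVP by differentiating the defining equations at $s=0$, and both use $u_{\lambda,0}\equiv\lambda$ (so $\nabla u_{\lambda,0}=0$) to kill the $\partial\gamma$ term in the flux, leaving exactly $\Lambda_{\lambda,\gamma,\rho}g$. Your packaging of the computation through $\mathcal{G}$ and the chain rule is just a slightly more formal rendering of the paper's direct differentiation of the PDE.
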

\begin{proof}
By Proposition \ref{prop: existence of forward problem}, for any $\lambda \in \R$ and $g\in J_S$, the IBVP \eqref{IBVP} with boundary data $\lambda+sg$ has a unique solution $u_{\lambda,sg}\in \mathcal{C}^{2+\alpha,1+\frac{\alpha}{2}}(\overline{Q})$ for $s \in [-s_g,s_g]$ (with $s_g= \varepsilon_{\lambda}/({\|g\|_{\mathcal{C}^{2+\alpha,1+\frac{\alpha}{2}}(\overline{\Sigma})}+1})$). Hence,  $\mathcal{N}_{\lambda,\gamma,\rho}( sg)$ is well defined for $s\in[-s_g,s_g]$, with 
$$\mathcal{N}_{\lambda,\gamma,\rho}( sg)=\gamma(t,\lambda+\varphi(sg)) (A(x)\nabla \varphi(sg) \cdot \nu(x))|_{S\times(0,T)},$$
where $\varphi$ is the $\mathcal{C}^1$ mapping from $B_{\varepsilon_\lambda,S}$ to $\mathcal{H}_2 \subset \mathcal{C}^{2+\alpha,1+\frac{\alpha}{2}}(\overline{Q})$ defined in the proof of Proposition \ref{prop: existence of forward problem}. Thus the map $s\mapsto\mathcal{N}_{\lambda,\gamma,\rho}( sg)$ is $\mathcal{C}^1$ from $[-s_g,s_g]$ to $L^2(\Sigma)$ and the expression $ \partial_s \mathcal{N}_{\lambda,\gamma,\rho}( sg) \mid_{s=0}$ in \eqref{derivative of N is Lambda} is well defined. Next, note that $u_{\lambda,0} = \lambda$, since $u\equiv \lambda$ is a solution to \eqref{IBVP} when $g \equiv 0$. We compute $\partial_su_{\lambda,0}:=\partial_su_{\lambda,sg}\mid_{s=0}$. Differentiating  \eqref{IBVP} with respect to $s$ gives 
$$
\partial_s[\rho(t,u_{\lambda,sg})\partial_tu_{\lambda,sg}- \gamma(t,u_{\lambda,sg})\nabla \cdot (A(x)  \nabla u_{\lambda,sg})]_{s=0}= 0 .
$$
We now compute the Fr\'echet derivative $\partial_s[\rho(t,u_{\lambda,sg})\partial_tu_{\lambda,sg}]_{s=0}$. First, we see
  $$
         \partial_s[\rho(t,u_{\lambda,sg})\partial_tu_{\lambda,sg}]_{s=0} = \rho(t,u_{\lambda,0})\partial_s[\partial_t u_{\lambda,sg}]_{s=0} + \partial_s[\rho(t,u_{\lambda,sg})]_{s=0}\partial_t u_{\lambda,0}
   $$
     In the term $\rho(t,u_{\lambda,0})\partial_s[\partial_t u_{\lambda,sg}]_{s=0} $, we may interchange the $s,t$ derivatives due to the assumed regularity of $u_{\lambda,sg}$. Moreover, substituting $u_{\lambda,0} = \lambda$ yields that $\partial_s[\rho(t,u_{\lambda,sg})]_{s=0}\partial_t u_{\lambda,0}=0$ and hence 
     $$
          \partial_s[\rho(t,u_{\lambda,sg})\partial_tu_{\lambda,sg}]_{s=0}  = \rho(t,\lambda) \partial_t \partial_su_{\lambda,0} .
   $$
Similarly, we get
   $$
         \partial_s[- \nabla \cdot (\gamma(t,u_{\lambda,sg})A(x)  \nabla u_{\lambda,sg})]_{s=0} = -\gamma(t,\lambda)\nabla\cdot(A(x)\nabla\partial_su_{\lambda,0}).
    $$
     Moreover, since $u_{\lambda,sg} = \lambda + sg$ on $\Sigma$ and $u_{\lambda,sg} (x,0) = 0$ for $x \in \Omega$, we have $\partial_s u_{\lambda,0} = g$ on $\Sigma$ and $\partial_su_{\lambda,0}(x,0) = 0$ for $ x \in \Omega$. Combining the preceding identities implies that  $\partial_su_{\lambda,0}$ solves 
$$
\begin{cases}
    \rho(t,\lambda) \partial_t\partial_su_{\lambda,0} - \gamma(t,\lambda)\nabla \cdot(A(x) \nabla \partial_su_{\lambda,0}) = 0 & \text{in } Q,\\
    \partial_su_{\lambda,0} = g & \text{on } \Sigma,\\
    \partial_su_{\lambda,0}(x,0) = 0 & x\in\Omega.
\end{cases}
$$
Hence, $\partial_su_{\lambda,0}$ is a solution to \eqref{linearised eqn. assoc. w/ DN map}, and so $\partial_su_{\lambda,0} \equiv w_{\lambda,g} $. Last, we prove \eqref{derivative of N is Lambda} by direct computation:
$$
     \begin{aligned}
       &\quad \partial_s \mathcal{N}_{\lambda,\gamma,\rho}( sg) \mid_{s=0} = \partial_s[\gamma(t,u_{\lambda,sg})A(x)\nabla u_{\lambda,sg} \cdot \nu(x)]_{s=0} \\
       &= \partial_s[\gamma(t,u_{\lambda,sg})]_{s=0} A(x)\nabla u_{\lambda,0} \cdot \nu(x) + \gamma(t,u_{\lambda,0}) \partial_s[A(x)\nabla u_{\lambda,sg}\cdot \nu(x)]_{s=0} \\
       &= \partial_s[\gamma(t,u_{\lambda,sg})]_{s=0}A(x)\nabla \lambda \cdot \nu(x)+\gamma(t,\lambda) A(x)\nabla \partial_s u_{\lambda,0}\cdot \nu(x) \\
       &= \gamma(t,\lambda)A(x) \nabla w_{\lambda,g} \cdot \nu(x) 
   =  \Lambda_{\lambda,\gamma,\rho}(g) .
      \end{aligned}
$$
This completes the proof of the proposition.
\end{proof}

Last, we recall the Sobolev space $H_0^{1/2,1/2}(S\times(0,T))$ and prove that the linear IBVP \eqref{linearised eqn. assoc. w/ DN map} admits a unique solution with less regular Dirichlet boundary data $g \in H_0^{1/2,1/2}(S\times(0,T))$.

\begin{definition}
 Following \cite[Chapter 4, Section 2]{LM2}, for $\tau=0,T$, we define ${}_{\tau}H^{1/2,1/2}(\Sigma)$ to be the subspace of $H^{1/2,1/2}(\Sigma) := L^2(0,T;H^{1/2}(\partial\Omega)) \cap H^{1/2}(0,T;L^2(\partial\Omega))$ with elements $g\in H^{1/2,1/2}(\Sigma)$ satisfying the global compatibility conditions $|t-\tau|^{-\frac{1}{2}}g \in L^2(\Sigma)$, i.e. 
 \begin{equation}
     {}_{\tau}H^{1/2,1/2}(\Sigma) = \{ g \in H^{1/2,1/2}(\Sigma) : |t-\tau|^{-\frac{1}{2}}g \in L^2(\Sigma) \} .
 \end{equation}
 For the space ${}_\tau H^{1/2,1/2}(\Sigma)$, we  equip the norm 
\begin{equation}
    \|g\|_{{}_{\tau}H^{1/2,1/2}(\Sigma)} = (\|g\|_{H^{1/2,1/2}(\Sigma)}^2 + \||t-\tau|^{-\frac{1}{2}}g\|_{L^2(\Sigma)}^2)^{1/2} ,\quad g\in {}_{\tau}H^{1/2,1/2}(\Sigma),
\end{equation}
and the associated scalar product. For any open subset $S \subset \partial \Omega$ we  define the space 
 \begin{equation}
 \label{eqn:defn of H_0^1/2}
     H_0^{1/2,1/2} (S \times (0,T)) = \{ g \in H^{1/2,1/2}(\Sigma) : \mathrm{supp}(g) \subset S \times (0,T) \} .
 \end{equation}
By noting that $H_0^{1/2,1/2} (S \times (0,T))\subset{}_{0}H^{1/2,1/2}(\Sigma)\cap {}_{T}H^{1/2,1/2}(\Sigma)$, we equip it with the norm
$\|g\|_{H_0^{1/2,1/2} (S \times (0,T))}=(\|g\|_{{}_{0}H^{1/2,1/2}(\Sigma)}^2+\|g\|_{{}_{T}H^{1/2,1/2}(\Sigma)}^2)^{1/2}$ for $g\in H_0^{1/2,1/2} (S \times (0,T))$, and the associated scalar product. Last, we define the spaces  ${}_\tau H^1(Q)$ and $H^{-1/2,-1/2}(S\times (0,T))$, with 
\begin{equation}
  {}_{\tau}H^{1}(Q) = \{v\in H^1(Q):\ v|_{t=\tau}= 0\} , 
\end{equation}
and with $H^{-1/2,-1/2}(S\times (0,T))$ being the dual space of $H_0^{1/2,1/2} (S \times (0,T))$.

\end{definition}
We first prove a lifting result of $H_0^{1/2,1/2}(S\times(0,T))$; see also \cite[Chapter 4, Section 2]{LM2}.
\begin{lem}
\label{lem:lifting}
    Let $\tau=0,T$. The trace map $v\mapsto v|_{\Sigma}$ is a continuous and surjective linear map from  ${}_{\tau}H^{1}(Q)$ to ${}_{\tau}H^{1/2,1/2}(\Sigma)$. Hence we can define a continuous linear map $E_\tau$ from ${}_{\tau}H^{1/2,1/2}(\Sigma)$ to ${}_{\tau}H^{1}(Q)$ such that 
		\begin{equation}\label{ET}E_\tau g|_{\Sigma}=g,\quad g\in {}_{\tau}H^{1/2,1/2}(\Sigma).\end{equation}
\end{lem}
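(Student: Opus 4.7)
By the time-reversal $t\mapsto T-t$ it suffices to treat the case $\tau=0$; the case $\tau=T$ is entirely analogous. The statement contains two assertions: continuity of the trace $v\mapsto v|_\Sigma$ from ${}_{0}H^{1}(Q)$ to ${}_{0}H^{1/2,1/2}(\Sigma)$, and its surjectivity, which then yields the bounded right inverse $E_0$. My overall strategy is to exploit extension by zero across $t=0$ so as to reduce both assertions to the standard anisotropic trace theorem on the enlarged cylinder $\Omega\times(-T,T)$, appealing to \cite[Chapter~4, Section~2]{LM2}.

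For continuity, the classical parabolic trace theorem already gives a bounded trace from $H^1(Q)$ to $H^{1/2,1/2}(\Sigma)$. For $v\in{}_0H^1(Q)$, since $v(\cdot,0)=0$ in the trace sense, its zero extension $\tilde v$ to $\Omega\times(-T,T)$ still lies in $H^1$ with norm controlled by $\|v\|_{H^1(Q)}$. The lateral trace of $\tilde v$ coincides with the zero extension $\tilde g$ of $g:=v|_\Sigma$ across $t=0$, and the trace theorem on the enlarged cylinder yields $\tilde g\in H^{1/2,1/2}(\partial\Omega\times(-T,T))$. Isolating in the Gagliardo time seminorm of $\tilde g$ the cross term with $t>0>s$ and estimating
\[
\int_0^T\!\!\int_{-T}^0\frac{\|g(\cdot,t)\|_{L^2(\partial\Omega)}^2}{(t-s)^2}\,ds\,dt\geq c\int_0^T\frac{\|g(\cdot,t)\|_{L^2(\partial\Omega)}^2}{t}\,dt
\]
delivers the missing weighted bound $\|t^{-1/2}g\|_{L^2(\Sigma)}\leq C\|v\|_{H^1(Q)}$, hence $g\in{}_0H^{1/2,1/2}(\Sigma)$ with a continuous estimate.

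For surjectivity, the same Gagliardo computation, run in reverse, shows that for $g\in{}_0H^{1/2,1/2}(\Sigma)$ its zero extension $\tilde g$ across $t=0$ lies in $H^{1/2,1/2}(\partial\Omega\times(-T,T))$, with norm controlled by $\|g\|_{{}_0H^{1/2,1/2}(\Sigma)}$. The surjective direction of the anisotropic trace theorem on the enlarged cylinder produces $\tilde v\in H^1(\Omega\times(-T,T))$ with lateral trace $\tilde g$ and a continuous norm bound. Because $\tilde g\equiv 0$ for $t<0$, the initial trace $h:=\tilde v(\cdot,0)\in H^{1/2}(\Omega)$ must have vanishing trace on $\partial\Omega$ (corner compatibility across $\partial\Omega\times\{0\}$), so in fact $h\in H^{1/2}_{00}(\Omega)$. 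I would then subtract a corrector $w\in H^1(Q)$ with $w|_\Sigma=0$ and $w(\cdot,0)=h$, constructed from an $H^1_0(\Omega)$-extension of $h$ in space combined with a smooth cut-off in time near $t=0$; the Hardy inequality across $\partial\Omega$ ensures the $H^1$-time regularity of $w$ near $t=0$. Setting $E_0 g:=\tilde v|_Q-w$ then gives the required bounded right inverse.

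The principal obstacle is the corrector step: one must leverage the compatibility $h|_{\partial\Omega}=0$ arising from the vanishing of $\tilde g$ for $t<0$ to upgrade $h$ from $H^{1/2}(\Omega)$ to $H^{1/2}_{00}(\Omega)$, and from there produce a genuine $H^1(Q)$-extension of $h$ with zero lateral boundary data. The $H^{1/2}_{00}$-versus-$H^{1/2}$ distinction, mediated by a Hardy inequality, is precisely the mechanism that closes the argument and is the characteristic technicality in the Lions--Magenes theory of these anisotropic spaces.
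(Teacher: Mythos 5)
Your overall architecture agrees with the paper's: both arguments reduce surjectivity to (i) producing some $H^1(Q)$ lift with the correct lateral trace $g$, and then (ii) correcting its initial trace by subtracting a function in $H^1(Q)\cap L^2(0,T;H^1_0(\Omega))$, which is possible precisely when that initial trace lies in the interpolation space $[H^1_0(\Omega),L^2(\Omega)]_{1/2}=H^{1/2}_{00}(\Omega)$. The routes to (i) differ --- you extend by zero across $t=0$ and apply the trace theorem on the enlarged cylinder $\Omega\times(-T,T)$, with a Gagliardo cross-term computation translating between $|t|^{-1/2}g\in L^2(\Sigma)$ and membership of the zero extension in $H^{1/2,1/2}$, whereas the paper lifts $g$ directly on $Q$ via \cite[Chapter 1, Theorem 4.2]{LM1} --- and both are fine; your continuity argument via the same cross-term estimate is correct and somewhat more self-contained than the paper's citation of \cite[Chapter 4, Theorems 2.1 and 2.2]{LM2}.

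The gap is in your justification of step (ii). You assert that, because $\tilde g\equiv 0$ for $t<0$, the initial trace $h=\tilde v(\cdot,0)$ ``must have vanishing trace on $\partial\Omega$'' and therefore lies in $H^{1/2}_{00}(\Omega)$. This inference is not valid: at regularity $1/2$ there is no bounded trace on $\partial\Omega$, $\mathcal{C}_0^\infty(\Omega)$ is dense in $H^{1/2}(\Omega)$ (so $H^{1/2}_0(\Omega)=H^{1/2}(\Omega)$), and $H^{1/2}_{00}(\Omega)$ is a strictly smaller space characterized not by boundary values but by the quantitative Hardy-type condition $\int_\Omega\mathrm{dist}(x,\partial\Omega)^{-1}|h(x)|^2\,dx<\infty$, cf. \cite[Chapter 1, Theorem 11.7]{LM1}. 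What actually delivers this condition is the quantitative corner-compatibility relation between the lateral and initial traces of an $H^1$ function, \cite[Chapter 4, Theorem 2.2]{LM2}: applied on $\Omega\times(-T,0)$, where the lateral trace of $\tilde v$ vanishes identically (so the weighted condition on the lateral data holds trivially), it yields exactly the weighted integrability of $h$. This is the step the paper carries out (there directly on $Q$, using $t^{-1/2}g\in L^2(\Sigma)$). Once $h\in H^{1/2}_{00}(\Omega)$ is secured, the corrector should be produced by the interpolation lifting \cite[Chapter 1, Theorem 3.2]{LM1} rather than by a cut-off of a fixed spatial extension, since $h$ is only $H^{1/2}$ in space and a genuinely time-dependent lifting is needed to land in $H^1(Q)$.
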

\begin{proof}
We prove only the case $\tau=0$, as the case $\tau=T$ is similar. By \cite[Chapter 4, Theorems 2.1 and 2.2]{LM2}, the trace map $v\mapsto v|_{\Sigma}$ is continuous from  ${}_{0}H^{1}(Q)$ to ${}_{0}H^{1/2,1/2}(\Sigma)$ and hence for the first assertion, it suffices to show the surjectivity of the map. To this end, fix $g\in {}_{0}H^{1/2,1/2}(\Sigma)$. Since ${}_{0}H^{1/2,1/2}(\Sigma)\subset H^{1/2,1/2}(\Sigma)$ and by applying \cite[Chapter 1, Theorem 4.2]{LM1} (see also  \cite[Chapter 4, Theorem 2.1]{LM2}), we deduce that there exists $w\in H^1(Q)$ such that $w|_{\Sigma}=g$. Consider $w_0=w|_{t=0}$. By 
\cite[Chapter 4, Theorem 2.1]{LM2}, we have $w_0\in H^{1/2}(\Omega)$ and, \cite[Chapter 4, Theorem 2.2]{LM2} and the fact $t^{-1/2}g\in L^2(\Sigma)$, imply $\int_{\Omega}\textrm{dist}(x,\partial\Omega)^{-1}|w_0(x)|^2dx<\infty.$
Therefore, in light of \cite[ Chapter 1, Theorem 11.7]{LM1}, $w_0$ lies in the interpolation space of order $1/2$ between $H^1_0(\Omega)$ and $L^2(\Omega)$ (cf. \cite[Chapter 1, Definition 2.1]{LM1}). Then, by \cite[Chapter 1, Theorem 3.2]{LM1}, we can find $v\in H^1(Q)\cap L^2(0,T;H^1_0(\Omega))$ such that $v|_{t=0}=w_0$. Thus, we have $w-v\in {}_{0}H^{1}(Q)$ and, since $v\in  L^2(0,T;H^1_0(\Omega))$, we find  $(w-v)|_{\Sigma}=w|_{\Sigma}-v|_{\Sigma}=g$. This proves the surjectivity of the trace map, and the right inverse $E_0$ of the trace map between  ${}_{0}H^{1}(Q)$ and ${}_{0}H^{1/2,1/2}(\Sigma)$ fulfills \eqref{ET}.
\end{proof}

\begin{lem}
\label{lem:existence of solution with less regular boundary data}
   Suppose that $\gamma$, $\rho$ and $A$ satisfies \eqref{ellipticity} and \eqref{positivity condition of gamma,rho}, and $g \in H_0^{1/2,1/2}(S\times(0,T))$. Then the linear IBVP \eqref{linearised eqn. assoc. w/ DN map} still admits an unique solution  $w_{\lambda, g} \in H^1(0,T ; H^{-1} (\Omega)) \cap L^2(0,T ; H^1(\Omega))$, and there exists $C=C\left(\Omega, T, A, m(\lambda),\sup_{t\in(0,T)}\frac{\rho(t,\lambda)}{\gamma(t,\lambda)} \right)$ such that 
    \begin{equation}
    \label{eqn:estimate for solution w/ less regular boundary data}
        \|w_{\lambda,g}\|_{H^1(0,T;H^{-1}(\Omega))} + \|w_{\lambda,g}\|_{L^2(0,T;H^1(\Omega))} \leq C\|g\|_{H^{1/2,1/2}_0(S\times(0,T))}.
    \end{equation}
\end{lem}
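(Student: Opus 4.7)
The plan is to reduce the problem with inhomogeneous boundary data to one with homogeneous data via a lifting, and then apply standard variational theory for linear parabolic equations. Since $g\in H_0^{1/2,1/2}(S\times(0,T))\subset {}_0H^{1/2,1/2}(\Sigma)$, Lemma \ref{lem:lifting} applied with $\tau=0$ produces an extension $G:=E_0 g\in {}_0H^1(Q)$ satisfying $G|_\Sigma=g$, $G|_{t=0}=0$, and $\|G\|_{H^1(Q)}\leq C\|g\|_{H^{1/2,1/2}_0(S\times(0,T))}$ with $C=C(\Omega,T)$. Setting $v:=w-G$, problem \eqref{linearised eqn. assoc. w/ DN map} becomes the equivalent problem of finding $v\in L^2(0,T;H^1_0(\Omega))\cap H^1(0,T;H^{-1}(\Omega))$ solving
$$\rho(t,\lambda)\partial_t v-\gamma(t,\lambda)\nabla\cdot(A(x)\nabla v)=F:=-\rho(t,\lambda)\partial_t G+\gamma(t,\lambda)\nabla\cdot(A(x)\nabla G)\ \text{in}\ Q,$$
together with $v|_\Sigma=0$ and $v|_{t=0}=0$. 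Since $\partial_t G\in L^2(Q)$ and $\nabla G\in L^2(Q)^n$, the source $F$ lies in $L^2(0,T;H^{-1}(\Omega))$ and its norm is controlled by $\|G\|_{H^1(Q)}$ together with $\sup_{t}\gamma(t,\lambda)$ and $\sup_t\rho(t,\lambda)$.

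Next, the existence, uniqueness, and energy estimate for $v$ are obtained from the classical variational theory for linear parabolic equations with $L^2(0,T;H^{-1}(\Omega))$ data. After dividing by $\rho(t,\lambda)$ (which is continuous in $t$ and bounded below by $m(\lambda)>0$ by \eqref{positivity condition of gamma,rho}), the equation for $v$ reads $\partial_t v-\mu(t)\nabla\cdot(A(x)\nabla v)=F/\rho(t,\lambda)$ where $\mu(t):=\gamma(t,\lambda)/\rho(t,\lambda)$ is continuous and satisfies $\mu(t)\geq (\sup_{t\in(0,T)}\rho(t,\lambda)/\gamma(t,\lambda))^{-1}>0$. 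The associated bilinear form $a(t;u,\phi):=\mu(t)\int_\Omega A(x)\nabla u\cdot\nabla\phi\,dx$ is then bounded and uniformly coercive on $H_0^1(\Omega)$ thanks to the ellipticity condition \eqref{ellipticity}. A direct application of J.-L. Lions' theorem (or, equivalently, a Galerkin construction as in \cite[Chapter~3]{LM1}) yields a unique $v$ in the desired class with zero initial trace, together with the estimate
$$\|v\|_{L^2(0,T;H^1(\Omega))}+\|v\|_{H^1(0,T;H^{-1}(\Omega))}\leq C\|F\|_{L^2(0,T;H^{-1}(\Omega))},$$
where $C$ depends only on $\Omega$, $T$, $A$, $m(\lambda)$, and $\sup_{t}\rho(t,\lambda)/\gamma(t,\lambda)$.

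Setting $w_{\lambda,g}:=v+G$ provides the desired solution of \eqref{linearised eqn. assoc. w/ DN map}, and concatenating the two estimates gives \eqref{eqn:estimate for solution w/ less regular boundary data}. For uniqueness, the difference $\tilde w$ of two solutions lies in $L^2(0,T;H^1_0(\Omega))\cap H^1(0,T;H^{-1}(\Omega))$ and solves the homogeneous problem with $\tilde w|_{t=0}=0$; testing with $\tilde w$ itself (justified in this class via the integration-by-parts identity $\frac{d}{dt}\|\tilde w(t)\|_{L^2(\Omega)}^2=2\langle\partial_t\tilde w(t),\tilde w(t)\rangle_{H^{-1},H^1_0}$) together with coercivity and Grönwall forces $\tilde w\equiv 0$.

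I do not anticipate any deep obstacle. The only care required is bookkeeping of the constant dependence: the lifting constant from Lemma \ref{lem:lifting} is a function of $\Omega$ and $T$ alone, while the coercivity/boundedness of $a(t;\cdot,\cdot)$ and the bound on $1/\rho(t,\lambda)$ used to estimate $F/\rho(t,\lambda)$ encode exactly the stated dependence on $m(\lambda)$ and the supremum of the ratio $\rho/\gamma$; the latter is what ultimately determines the coercivity constant of the linear parabolic problem and drives the final estimate.
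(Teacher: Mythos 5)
Your proposal is correct and follows essentially the same route as the paper: lift $g$ to $E_0g\in{}_0H^1(Q)$ via Lemma \ref{lem:lifting}, subtract it to reduce to a homogeneous Dirichlet problem with an $L^2(0,T;H^{-1}(\Omega))$ source, and invoke the classical variational (Lions) theory to solve and estimate the remainder. Your extra step of dividing by $\rho(t,\lambda)$ to exhibit the coercivity constant merely makes explicit the dependence on $m(\lambda)$ and $\sup_t\rho/\gamma$ that the paper leaves implicit in its citation of \cite[Chapter 3, Theorem 4.1]{LM1}.
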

\begin{proof}
    First, since $g \in H_0^{1/2,1/2}(S\times(0,T))$, we can define $w=E_0g\in {}_{0}H^{1}(Q)$ as in Lemma \ref{lem:lifting} with $w\mid_\Sigma = g$ and $\|w\|_{H^1(Q)} \leq C\|g\|_{H_0^{1/2,1/2}(S\times (0,T))}$. We decompose $w_{\lambda,g}$ by $ w_{\lambda,g}= w+\xi$, where $\xi$ solves the IBVP
    \begin{equation}
            \begin{cases}
            \label{BVP for xi}
        \rho_\lambda(t)\partial_t\xi - \gamma_\lambda(t) \nabla \cdot (A(x)\nabla \xi )= -\rho_\lambda(t) \partial_t w + \gamma_\lambda(t)\nabla \cdot(A(x) \nabla w)  & \text{in } Q, \\
        \xi = 0 & \text{on }\Sigma, \\
        \xi(x,0) = 0 & x\in \Omega ,
    \end{cases}
    \end{equation}
	and $\rho_\lambda$ and $\gamma_\lambda$ denote the maps $t\mapsto\rho(t,\lambda)$ and $t\mapsto\gamma(t,\lambda)$. By \cite[Chapter 3, Theorem 4.1]{LM1}, since $-\rho_\lambda\partial_tw + \gamma_\lambda\nabla \cdot (A \nabla w) \in L^2(0,T;H^{-1}(\Omega))$, there exists a unique solution $\xi \in H^1(0,T; H^{-1}(\Omega)) \cap L^2(0,T;H_0^1(\Omega))$ to \eqref{BVP for xi}. By the same Theorem, $\xi$ satisfies the estimate 
  \begin{equation}
  \label{eqn:est for xi in well-posedness for less regular bndry data}
    \|\xi\|_{H^1(0,T;H^{-1}(\Omega))} + \|\xi\|_{L^2(0,T;H^1(\Omega))} \leq C \|\rho_\lambda\partial_tw - \gamma_\lambda\nabla \cdot (A \nabla w)\|_{L^2(0,T;H^{-1}(\Omega))} . 
  \end{equation}
Hence there exists an unique solution $w_{\lambda,g}=w+\xi$ to \eqref{linearised eqn. assoc. w/ DN map} lying in $H^1(0,T;H^{-1}(\Omega))\cap L^2(0,T;H^1(\Omega))$.  Last, using the estimate \eqref{eqn:est for xi in well-posedness for less regular bndry data}, we get
\begin{align*}
      &\quad \|w_{\lambda,g}\|_{H^1(0,T;H^{-1}(\Omega))} + \|w_{\lambda,g}\|_{L^2(0,T;H^1(\Omega))} \\
      &= \|w+\xi\|_{H^1(0,T;H^{-1}(\Omega))} + \|w+\xi\|_{L^2(0,T;H^1(\Omega))} \\
      & \leq C(\|w\|_{H^1(Q)} +  \|\rho_\lambda\partial_tw - \gamma_\lambda\nabla \cdot (A \nabla w)\|_{L^2(0,T;H^{-1}(\Omega))}) \\
      & \leq C\|w\|_{H^1(Q)} \leq C 
      \|g\|_{H_0^{1/2,1/2}(S\times(0,T))} .
\end{align*}
This completes the proof of the lemma.
\end{proof}
\begin{prop}
\label{prop:extension of DN map and continuity}
    For any $\lambda \in \R$ and $S \subset \partial\Omega$, the DN map $\Lambda_{\lambda,\gamma,\rho}$ can be extended by density to a bounded linear map from $H_0^{1/2,1/2}(S\times (0,T))$ to $H^{-1/2,-1/2}(S \times (0,T))$, defined by 
    \begin{equation}
    \label{defn of DN map for less smooth functions}
    \begin{aligned}
        \langle &\Lambda_{\lambda,\gamma,\rho} g, h\rangle_{H^{-1/2,-1/2}(S\times(0,T)),H^{1/2,1/2}_0(S\times(0,T))} \\  &= \int_Q \left(-\partial_t \rho_\lambda(t) w_{\lambda,g} E_Th - \rho_\lambda(t) w_{\lambda,g} \partial_t (E_Th)  + \gamma_\lambda(t)A(x) \nabla w_{\lambda,g}\cdot \nabla (E_Th) \right)dx dt ,
        \end{aligned}
    \end{equation}
    for $g,h \in H_0^{1/2,1/2}(S\times (0,T))$ and $E_T$ the lifting operator of Lemma \ref{lem:lifting}. Moreover, the mapping $\lambda \mapsto \Lambda_{\lambda,\gamma,\rho}$ is continuous from $\mathbb R$ to $\mathcal B(H_0^{1/2,1/2}(S\times (0,T));H^{-1/2,-1/2}(S \times (0,T)))$.
\end{prop}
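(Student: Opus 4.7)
My approach proceeds in three stages: first derive the integral representation \eqref{defn of DN map for less smooth functions} on smooth data by integration by parts, then extend it by continuity to all of $H_0^{1/2,1/2}(S\times(0,T))$ using the regularity from Lemma \ref{lem:existence of solution with less regular boundary data} and the lifting of Lemma \ref{lem:lifting}, and finally exploit this formula together with a continuous-dependence estimate to obtain the continuity of $\lambda\mapsto\Lambda_{\lambda,\gamma,\rho}$.

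\textbf{Stage 1: the formula on $J_S$.} Fix $g\in J_S$, so that Proposition \ref{prop: existence of forward problem} (applied to the linear problem, cf.\ \cite[p.~320, Theorem 5.3]{ladyzhenskaia1968linear}) gives $w_{\lambda,g}\in\mathcal{C}^{2+\alpha,1+\alpha/2}(\overline Q)$, and pick $h\in J_S$. I test the equation $\rho_\lambda\partial_tw_{\lambda,g}-\gamma_\lambda\nabla\cdot(A\nabla w_{\lambda,g})=0$ against $E_Th$ and integrate by parts. The $t$-integration by parts produces endpoint contributions at $t=0,T$ which vanish because $w_{\lambda,g}(\cdot,0)=0$ and $E_Th(\cdot,T)=0$, leaving $-\int_Q(\partial_t\rho_\lambda E_Th+\rho_\lambda\partial_t E_Th)w_{\lambda,g}\,dxdt$. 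The $x$-integration by parts produces a lateral boundary term on $\Sigma$ which equals $-\int_{S\times(0,T)}\gamma_\lambda(A\nabla w_{\lambda,g}\cdot\nu)h\,dSdt=-\langle\Lambda_{\lambda,\gamma,\rho}g,h\rangle$ since $E_Th|_\Sigma=h$ is supported in $S\times(0,T)$, plus the volume term $\int_Q\gamma_\lambda A\nabla w_{\lambda,g}\cdot\nabla E_Th\,dxdt$. Rearranging gives exactly \eqref{defn of DN map for less smooth functions}.

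\textbf{Stage 2: extension and independence of the lifting.} For general $g,h\in H_0^{1/2,1/2}(S\times(0,T))$, Lemma \ref{lem:existence of solution with less regular boundary data} yields $w_{\lambda,g}\in L^2(0,T;H^1(\Omega))\cap H^1(0,T;H^{-1}(\Omega))$ with the estimate \eqref{eqn:estimate for solution w/ less regular boundary data}, while Lemma \ref{lem:lifting} provides $E_Th\in{}_TH^1(Q)$ with $\|E_Th\|_{H^1(Q)}\le C\|h\|_{H_0^{1/2,1/2}(S\times(0,T))}$. Each of the three integrals in \eqref{defn of DN map for less smooth functions} is then controlled by Cauchy--Schwarz, using the boundedness of $\rho_\lambda,\partial_t\rho_\lambda,\gamma_\lambda$ on $[0,T]$, to produce a bounded bilinear form on $H_0^{1/2,1/2}\times H_0^{1/2,1/2}$. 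Independence of the choice of lifting is the subtle point: if $\tilde E_T$ is another lifting, then $\phi:=E_Th-\tilde E_Th\in L^2(0,T;H^1_0(\Omega))\cap H^1(0,T;H^{-1}(\Omega))$ with $\phi(\cdot,T)=0$, and I must show that the right-hand side of \eqref{defn of DN map for less smooth functions} evaluated at $\phi$ in place of $E_Th$ vanishes. For this I use the embedding $w_{\lambda,g}\in\mathcal{C}([0,T];L^2(\Omega))$, the $H^{-1}$--$H^1_0$ integration-by-parts identity in $t$ (the endpoint terms again disappearing since $w_{\lambda,g}(\cdot,0)=0$ and $\phi(\cdot,T)=0$), to rewrite the expression as the weak formulation of \eqref{linearised eqn. assoc. w/ DN map} tested against $\phi\in L^2(0,T;H^1_0(\Omega))$, which is zero. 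Finally, by Stage 1 the right-hand side coincides with the action of $\Lambda_{\lambda,\gamma,\rho}$ as defined in \eqref{eqn: defn of linear dn} on the dense subspace $J_S\subset H_0^{1/2,1/2}(S\times(0,T))$ (density follows by mollification and cutoff near $\partial S\cup\{0,T\}$), so formula \eqref{defn of DN map for less smooth functions} is the unique continuous extension.

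\textbf{Stage 3: continuity in $\lambda$.} Fix $\lambda_0\in\mathbb{R}$. Using \eqref{defn of DN map for less smooth functions} I split $\langle(\Lambda_{\lambda,\gamma,\rho}-\Lambda_{\lambda_0,\gamma,\rho})g,h\rangle$ into coefficient-difference terms (involving $\rho_\lambda-\rho_{\lambda_0}$, $\partial_t\rho_\lambda-\partial_t\rho_{\lambda_0}$, $\gamma_\lambda-\gamma_{\lambda_0}$) paired with $w_{\lambda,g}$, plus solution-difference terms paired with the frozen coefficients at $\lambda_0$. The coefficient differences tend to $0$ in $\mathcal{C}([0,T])$ as $\lambda\to\lambda_0$ thanks to $\rho,\gamma\in\mathcal{C}^1([0,T];\mathcal{C}^3(\mathbb{R}))$, and $\|w_{\lambda,g}\|_{L^2(0,T;H^1(\Omega))}$ is bounded uniformly for $\|g\|_{H_0^{1/2,1/2}}\le 1$ and $\lambda$ in a neighborhood of $\lambda_0$ by the continuity in $\lambda$ of the constant in Lemma \ref{lem:existence of solution with less regular boundary data}. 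For the solution-difference part, $w_{\lambda,g}-w_{\lambda_0,g}$ solves a linear IBVP with zero initial-boundary data and source $(\rho_{\lambda_0}-\rho_\lambda)\partial_tw_{\lambda_0,g}+(\gamma_\lambda-\gamma_{\lambda_0})\nabla\cdot(A\nabla w_{\lambda_0,g})$; standard energy estimates for the linear parabolic equation control the difference in $L^2(0,T;H^1)\cap H^1(0,T;H^{-1})$ by this source in $L^2(0,T;H^{-1}(\Omega))$, which in turn goes to $0$ uniformly in $\|g\|_{H_0^{1/2,1/2}}\le 1$ as $\lambda\to\lambda_0$. Securing this last uniform bound is the main technical obstacle, as it requires care with the dependence of parabolic a priori estimates on the coefficient bounds; once obtained, summing the pieces yields $\|\Lambda_{\lambda,\gamma,\rho}-\Lambda_{\lambda_0,\gamma,\rho}\|_{\mathcal{B}(H_0^{1/2,1/2};H^{-1/2,-1/2})}\to 0$, which is the desired continuity.
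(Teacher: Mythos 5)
Your proposal is correct and follows essentially the same route as the paper: derive \eqref{defn of DN map for less smooth functions} on $J_S$ by integrating by parts in $x$ and $t$ (using $w_{\lambda,g}|_{t=0}=0$ and $E_Th|_{t=T}=0$), extend by density using the bounds of Lemmas \ref{lem:lifting} and \ref{lem:existence of solution with less regular boundary data}, and prove continuity in $\lambda$ by splitting into coefficient-difference terms and a solution-difference term that solves a linear IBVP controlled by the energy estimate of \cite[Chapter 3, Theorem 4.1]{LM1}. Your additional verification that the formula is independent of the choice of lifting (and your sketch of the density of $J_S$) addresses points the paper leaves implicit, and is a welcome refinement rather than a departure.
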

\begin{proof}
First, we fix $g \in J_S$ and $h \in H_0^{1/2,1/2}(S\times (0,T))$, and note that $\Lambda_{\lambda,\gamma,\rho}g$ is defined by \eqref{eqn: defn of linear dn}. Also recall that $w_{\lambda,g}\in \mathcal{C}^{2+\alpha,1+\frac{\alpha}{2}}(\overline{Q})$ satisfies $w_{\lambda,g}|_{t=0}=0$, and that $E_Th\in H^1(Q)$ with $E_Th|_{t=T}=0$. Then, we derive
\begin{align}
    \langle \Lambda_{\lambda,\gamma,\rho} g, h\rangle &= \int_\Sigma \gamma_\lambda(t) (A(x) \nabla w_{\lambda,g} \cdot \nu(x))h   d\sigma(x) dt \nonumber\\
    &= \int_Q \gamma_\lambda(t) ( \nabla\cdot(A(x) \nabla w_{\lambda,g})E_Th  +A(x)\nabla w_{\lambda,g} \cdot \nabla (E_Th)) dx dt \nonumber\\
    &= \int_Q \rho_\lambda(t) \partial_tw_{\lambda,g}E_Th + \gamma_\lambda(t) A(x) \nabla w_{\lambda,g} \cdot \nabla (E_Th)dx dt \nonumber\\
    &= \int_Q -\partial_t \rho_\lambda(t) w_{\lambda,g}E_Th - \rho_\lambda(t) w_{\lambda,g} \partial_t (E_Th)dx dt\nonumber  \\ 
    & \ \ \   +\int_Q \gamma_\lambda(t)A(x) \nabla w_{\lambda,g}\cdot \nabla (E_Th) dx dt .     \label{defn of DN map for less smooth functions 2}
\end{align}
Note that for $g \in H_0^{1/2,1/2}(S\times(0,T))$, by Lemma \ref{lem:existence of solution with less regular boundary data}, $w_{\lambda,g} \in H^1(0,T ; H^{-1} (\Omega)) \cap L^2(0,T ; H^1(\Omega))$ and the expression in the last line of \eqref{defn of DN map for less smooth functions 2} is well defined for such class of functions. Hence, by density of  $J_S$ in $H_0^{1/2,1/2}(S \times (0,T))$, one can extend $\Lambda_{\lambda,\gamma,\rho}$ using the identity \eqref{defn of DN map for less smooth functions} as an operator from $H_0^{1/2,1/2}(S\times(0,T))$ to $H^{-1/2,-1/2}(S\times(0,T))$. To show the boundedness of $\Lambda_{\lambda,\gamma,\rho}$, by the inequality \eqref{eqn:estimate for solution w/ less regular boundary data} in Lemma \ref{lem:existence of solution with less regular boundary data}, we conclude 
$$
\begin{aligned}
    |\langle \Lambda_{\lambda,\gamma,\rho}g,h \rangle | &\leq C\|w_{\lambda,g}\|_{L^2(0,T;H^1(\Omega))}\|E_Th\|_{H^1(Q)} \\
    &\leq C\|g\|_{H_0^{1/2,1/2}(S\times(0,T))}\|h\|_{H_0^{1/2,1/2}(S\times(0,T))} .
    \end{aligned}
$$
To show the last assertion, it suffices to show 
\begin{equation}
\label{eqn:main limit to show lambda cts.}
    \lim_{\delta \to 0} \sup_g \|(\Lambda_{\lambda+\delta,\gamma,\rho}- \Lambda_{\lambda,\gamma,\rho})g\|_{H^{-1/2,-1/2}(S\times(0,T))} = 0  ,
\end{equation}
where the supremum is taken over all $g \in H_0^{1/2,1/2}(S\times(0,T))$ with $\|g\|_{H_0^{1/2,1/2}(S \times (0,T))}  = 1$. Using \eqref{defn of DN map for less smooth functions}, we have for arbitrary $g,h \in H_0^{1/2,1/2}(S\times(0,T))$ that 
$$
\begin{aligned}
    |\langle (\Lambda_{\lambda+\delta,\gamma,\rho}-\Lambda_{\lambda,\gamma,\rho})g,h\rangle| = \left\lvert \int_Q \left(\mathcal{A} + \mathcal{B} + \mathcal{C} \right) dx dt\, \right\lvert ,
\end{aligned}
$$
with
$\mathcal{A} = (\partial_t \rho_{\lambda+\delta}) w_{\lambda+\delta,g} E_Th -  (\partial_t \rho_{\lambda} )w_{\lambda,g} E_Th$, $ \mathcal{B} = \rho_{\lambda+\delta} w_{\lambda+\delta,g} \partial_t(E_Th) - \rho_{\lambda} w_{\lambda,g} \partial_t(E_Th)$,
and  $\mathcal{C} = \gamma_{\lambda+\delta} A\nabla w_{\lambda+\delta,g} \cdot \nabla (E_Th) -  \gamma_{\lambda} A\nabla w_{\lambda,g} \cdot \nabla (E_Th)$.
To bound the term $\mathcal{A}$, we rewrite $\mathcal{A}$ as
$$
\mathcal{A} =( E_Th)w_{\lambda+\delta,g}\partial_t(\rho_{\lambda+\delta}-\rho_\lambda) + (E_Th)(\partial_t\rho_\lambda)(w_{\lambda+\delta,g}-w_{\lambda,g}).
$$
The integral of the term $( E_Th)w_{\lambda+\delta,g}\partial_t(\rho_{\lambda+\delta}-\rho_\lambda)$ converges to zero as $\delta \to 0$ by the smoothness of $\rho$. For the second term $(E_Th)(\partial_t\rho_\lambda)(w_{\lambda+\delta,g}-w_{\lambda,g})$, we deduce that $w:=w_{\lambda+\delta,g}-w_{\lambda,g}$ satisfies
$$
    \begin{cases}
        \rho_{\lambda+\delta} \partial_t w - \gamma_{\lambda+\delta} \nabla \cdot (A\nabla w) = (-\rho_{\lambda+\delta} +\rho_\lambda) \partial_t w_{\lambda,g} + (\gamma_{\lambda+\delta}-\gamma_\lambda) \nabla \cdot (A\nabla w_{\lambda,g}) & \text{in }Q,\\
        w = 0 & \text{on }\Sigma,\\
        w(x,0) = 0 & x \in \Omega.
    \end{cases}
$$
Then, again by \cite[Chapter 3, Theorem 4.1]{LM1}, we have
$$
\begin{aligned}
    &\|w\|_{H^1(0,T;H^{-1}(\Omega))}  + \|w\|_{L^2(0,T;H^1(\Omega))} \\ & \quad \quad \leq C \|(\rho_{\lambda+\delta} -\rho_\lambda) \partial_t w_{\lambda,g} - (\gamma_{\lambda+\delta}-\gamma_\lambda) \nabla \cdot (A\nabla w_{\lambda,g})\|_{L^2(0,T;H^{-1}(\Omega))} , 
    \end{aligned}
$$
and the right hand side converges to zero as $\delta \to 0$ by the smoothness of $\gamma,\rho$. The terms $\mathcal{B}$ and $\mathcal{C}$ can be estimated similarly to justify \eqref{eqn:main limit to show lambda cts.} and thus the proof is concluded.
\end{proof}

\section{Construction of singular solutions to the linearized Problem}
\label{section: construction of special solutions}
In this section, we construct a class of singular solutions of the linearized equation \eqref{linearised eqn. assoc. w/ DN map} concentrating at a prescribed point $(x_0,t_0)\in S\times(0,T)$, which will play a fundamental role in the stable determination of the nonlinear term $\gamma$. In contrast to related works on boundary determination of nonlinear terms for parabolic equations (see e.g \cite{Is1,Is2}), we do not employ fundamental solutions of parabolic equations.  Instead, we extend the construction of singular solutions for elliptic equations in \cite[Section 3]{kian2023lipschitz} to the parabolic case. The approach differs also from other constructions of solutions for boundary determination of coefficients of parabolic equations such as the one introduced in \cite{FA}.

Consider an open bounded extension $\Omega_*$ of the domain $\Omega$ with a smooth boundary such that $\overline{\Omega} \subset  \Omega_*$ and we extend the map $A$ into a map defined on $\overline{\Omega_*}$ still denoted by $A$ and lying in  $ \mathcal{C}^3(\overline{\Omega_*};\R^{n\times n})$. We denote by $D$ the diagonal of $\Omega_* \times \Omega_*$, i.e. $D = \{ (x,x) : x \in \Omega_*\}$. By \cite[Theorem 3]{kalf1992ee}, for the uniformly elliptic operator $\mathcal{L}$ given by $\mathcal{L}u = -\nabla \cdot (A(x) \nabla u)$, there exists a parametrix  $P: (\Omega_*\times \Omega_*) \setminus D \to \R $ such that $P \in \mathcal{C}((\Omega_* \times \Omega_*) \setminus D)$, $P(\cdot,y) \in \mathcal{C}^2(\Omega_* \setminus \{y\})$ for each $y \in \Omega_*$, and $\mathcal{L}_xP(x, y) = 0$ for each $x\in \Omega_* \setminus \{y\}$. Moreover, we can split the parametrix $P$ into two terms $H + \mathcal{R}$, where $H$ is the fundamental solution to $\mathcal{L}$ given by 
$$
     H(x,y) = \Psi\left(\sqrt{A(y)(x-y)\cdot(x-y)}\right) ,
$$
with $\Psi$ being the fundamental solution to the Laplace equation. It is well known (see \cite[p.258]{kalf1992ee}) that for $n\geq 3$ and $(x,y) \in (\Omega_* \times \Omega_*) \setminus D$, $H$ satisfies the estimates
 \begin{equation}
 \label{reference estimate for H}
      c|x-y|^{2-n} \leq  |H(x,y)| \leq C|x-y|^{2-n},  \, c |x-y|^{1-n} \leq |\nabla_xH(x,y)| \leq C|x-y|^{1-n} ,
 \end{equation}
 where the constants $c,C>0$ depend only on $\Omega $ and $A$. When $n=2$, we have instead $c|\ln|x-y|| \leq |H(x,y)|\leq C|\ln|x-y||$ and $c|x-y|^{-1} \leq |H(x,y)| \leq C|x-y|^{-1}$. For $n \geq 2$, the remainder term  $\mathcal{R}$ satisfies 
 \begin{equation}
 \label{reference estimate for R}
     c|x-y|^{\frac{5}{2}-n} \leq |\mathcal{R}(x,y)| \leq C|x-y|^{\frac{5}{2}-n} , \, c |x-y|^{\frac{3}{2}-n} \leq |\nabla_x\mathcal{R}(x,y)| \leq C|x-y|^{\frac{3}{2}-n} ,
 \end{equation}
where $(x,y) \in (\Omega_* \times \Omega_*) \setminus D$ and $c,C>0$ are as above. Now, fixing $x_0 \in S$ and by using boundary normal coordinates (see \cite[Theorem 2.12]{kachalov2001inverse} for more details), we can set $\delta'>0$ sufficiently small such that for all $\tau \in (0,\delta')$, there exists unique $y_\tau \in \Omega_*\setminus \overline{\Omega}$ such that $\text{dist}(y_\tau,\partial\Omega) = |y_\tau - x_0| = \tau$. 
 %% By the projection property
We fix $\Omega'$ a bounded domain with $\mathcal{C}^2$ boundary such that $\Omega \varsubsetneq \Omega' \subset \overline{\Omega'} \subset \Omega_*$, and also require that $(\partial\Omega \setminus S )\subset \partial\Omega'$ (see Figure 1). Then, by picking $\delta = \min ( \text{dist}(x_0,\partial \Omega'), 1, \delta')$, we have that for all $\tau \in (0,\delta)$, \text{dist}($y_\tau,\partial \Omega') \geq \delta$. 

\begin{figure}[hbt!]
   \label{fig: construction of singular}
       \centering     
   \includegraphics[scale=0.25]{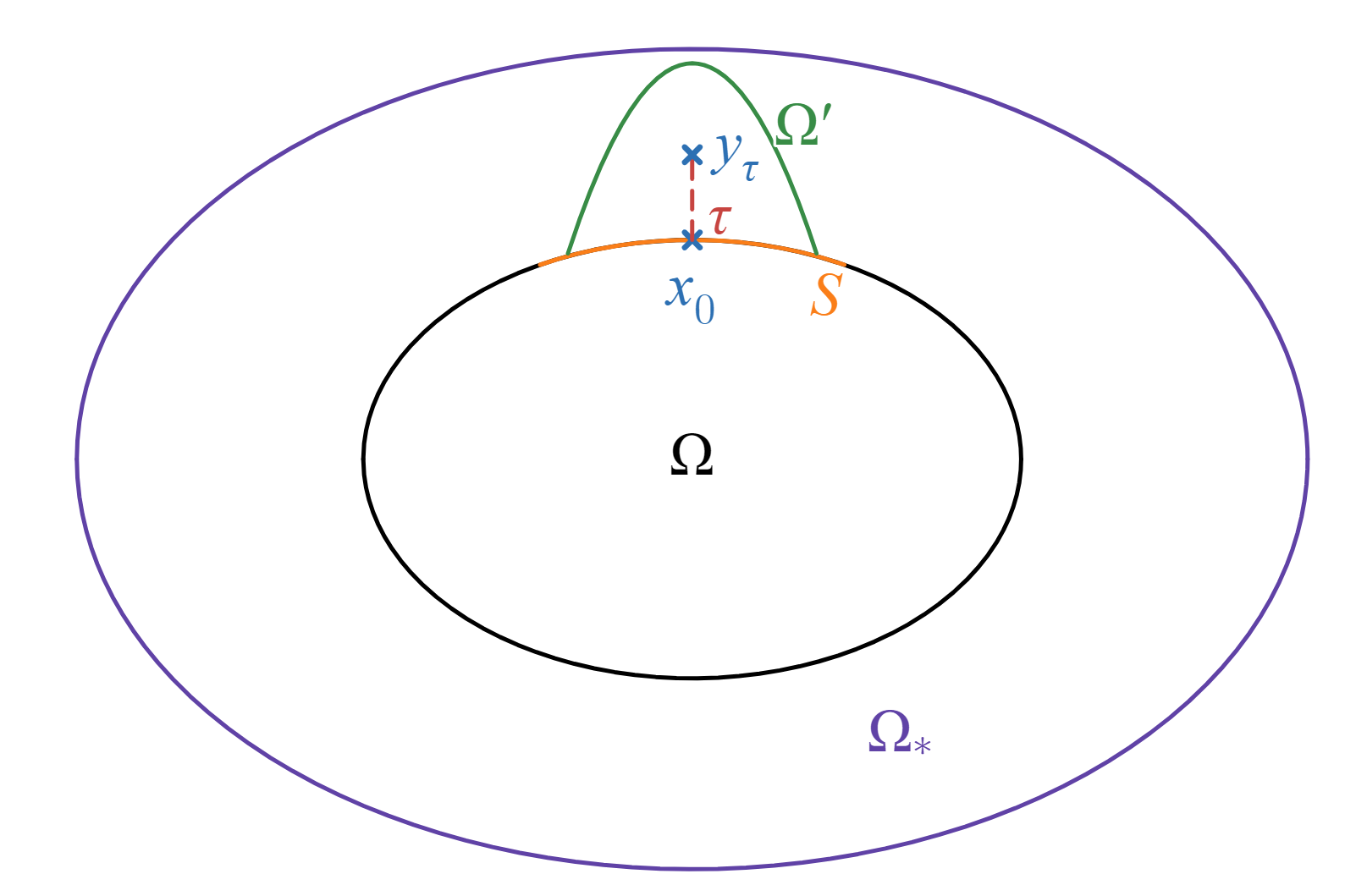}
   \caption{Example of the domains $\Omega,\Omega_*,\Omega'$ and the associated points $x_0,y_\tau$.}
    \end{figure}
For $\tau \in (0,\delta)$, we denote by $v_\tau \in H^2(\Omega')$ the solution of the boundary value problem
\begin{equation}
\label{defn of v_tau}
            \begin{cases}
-\nabla\cdot (A(x) \nabla v_\tau) = 0  & \text{in } \Omega' ,
\\
v_\tau = P(x,y_\tau) & \text{on } \partial\Omega' .
\end{cases}
\end{equation}
Then, we fix $\varphi$ to be an arbitrary function lying in $\mathcal{C}_0^\infty ([-1,1])$ with $\|\varphi\|_{L^2((-1,1))} = 1$, and for $t_0 \in (0,T)$, we define the  cutoff function $\varphi_\tau$ by 
\begin{equation}
\label{defn of varphi_tau}
    \varphi_\tau (t) = \sqrt{a_\tau}\varphi\left( a_\tau(t-t_0) \right),
\end{equation}
where we take $a_\tau = |\ln\tau|^{\frac{1}{16}}$ \footnote{Such choice is not optimal but suffices for the proof.}. Note that supp$(\varphi_\tau) \subset (t_0-a_\tau^{-1},t_0 + a_\tau^{-1})$ and $\lim_{\tau\to0}a^{-1}_\tau = 0$. Hence, for any fixed $t_0 \in (0,T)$,
we can find $\tau_0 \in (0, \delta)$ such that for all $\tau < \tau_0$, we have $a_\tau^{-1} < \min(t_0,T-t_0)$ and hence supp$(\varphi_\tau) \subset (0,T)$. 

\begin{lem}
\label{lemma: pseudo mollifier}
    Suppose $t_0 \in (0, T)$ and $f \in \mathcal{C}([0,T])$, and let $\varphi_\tau$ be defined as above. Then
    \begin{equation}
        \lim_{\tau \to 0} \int_0^T \varphi^2_\tau(t) f(t) dt  = f(t_0).
    \end{equation}
\end{lem}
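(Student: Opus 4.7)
The plan is to recognize $\varphi_\tau^2$ as an approximation to the Dirac mass at $t_0$, and to verify this via a straightforward change of variables. First, I would note that the hypothesis $a_\tau=|\ln\tau|^{1/16}\to+\infty$ as $\tau\to 0$, combined with the preceding observation in the paper that for $\tau$ small enough $\mathrm{supp}(\varphi_\tau)\subset(t_0-a_\tau^{-1},t_0+a_\tau^{-1})\subset(0,T)$, allows me to restrict the integral to a shrinking neighborhood of $t_0$ contained in $(0,T)$.

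Then I would perform the substitution $s=a_\tau(t-t_0)$, so that $dt=ds/a_\tau$ and $\varphi_\tau^2(t)\,dt=a_\tau\varphi^2(a_\tau(t-t_0))\,dt=\varphi^2(s)\,ds$. Since $\mathrm{supp}(\varphi)\subset[-1,1]$, this rewrites the integral as
$$
\int_0^T\varphi_\tau^2(t)f(t)\,dt=\int_{-1}^{1}\varphi^2(s)f\!\left(t_0+\tfrac{s}{a_\tau}\right)ds,
$$
valid for all $\tau<\tau_0$.

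Now I would pass to the limit $\tau\to 0$ using the dominated convergence theorem: the integrand is bounded pointwise in absolute value by $\|f\|_{\mathcal{C}([0,T])}\varphi^2(s)\in L^1((-1,1))$, and by continuity of $f$ at $t_0$, $f(t_0+s/a_\tau)\to f(t_0)$ for each $s\in[-1,1]$. Therefore
$$
\lim_{\tau\to 0}\int_0^T\varphi_\tau^2(t)f(t)\,dt=\int_{-1}^{1}\varphi^2(s)f(t_0)\,ds=f(t_0)\|\varphi\|_{L^2((-1,1))}^2=f(t_0),
$$
where the last equality uses the normalization $\|\varphi\|_{L^2((-1,1))}=1$ from \eqref{defn of varphi_tau}. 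There is no real obstacle here — this is a textbook mollifier-type computation — and one could equally well invoke uniform continuity of $f$ on the compact interval $[t_0-a_{\tau_0}^{-1},t_0+a_{\tau_0}^{-1}]$ to avoid dominated convergence. The lemma serves later as the mechanism by which integrating against $\varphi_\tau^2$ extracts the pointwise value at $t_0$ in the linearized identity, after the elliptic parametrix $P(\cdot,y_\tau)$ concentrates the spatial integration at $x_0$.
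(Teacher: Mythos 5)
Your proof is correct and follows exactly the paper's argument: the same change of variables $s=a_\tau(t-t_0)$ reducing the integral to $\int_{-1}^1\varphi^2(s)f(t_0+s/a_\tau)\,ds$, followed by dominated convergence and the normalization $\|\varphi\|_{L^2((-1,1))}=1$. No issues.
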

\begin{proof}
By choosing $\tau<\tau_0$ and changing variable $u=a_\tau(t-t_0)$, we get 
$$ \int_0^T \varphi^2_\tau(t) f(t) dt  =\int_{\mathbb R}\varphi^2_\tau(t) f(t)dt= \int_{\mathbb R} a_\tau \varphi^2(a_\tau (t-t_0)) f(t) dt =   \int_{-1}^1 \varphi^2 (u) f(ua_\tau^{-1} + t_0)  du. $$
The result then follows from the dominated convergence theorem, since $\|\varphi\|_{L^2((-1,1))} = 1$. 
\end{proof}

For $t_0 \in (0,T)$ and $\tau \in (0,\tau_0)$, we define a boundary function $g_\tau$ by
\begin{equation}
    g_\tau(x,t) = \varphi_{\tau}(t)(P(x,y_\tau)-v_\tau(x)).
\end{equation}
Note that $g_\tau \in \mathcal{C}^\infty_0(0,T;H^{3/2}(\partial\Omega))\subset H^{1/2,1/2}(\Sigma)$ and supp$(g_\tau)\subset S\times(0,T)$. For $\lambda\in\R$, $t_0 \in (0,T)$ and $\tau \in (0, \tau_0)$, by \cite[Chapter 4, Theorem 6.1]{LM2}, we denote by $w_{\lambda,\tau} \in H^1(0,T;L^2(\Omega))\cap L^2(0,T;H^2(\Omega))$ the solution to problem \eqref{linearised eqn. assoc. w/ DN map} with $g = g_\tau$, i.e. 
\begin{equation}
\label{defn of w_lambda,rho,tau}
            \begin{cases}
\rho_\lambda(t) \partial_t w_{\lambda,\tau} - \gamma_\lambda(t)\nabla\cdot(A(x)\nabla w_{\lambda,\tau}) = 0   & \text{in } Q ,
\\
w_{\lambda,\tau} = g_\tau(x,t) & \text{on } \Sigma , \\
w_{\lambda,\tau}(x,0) = 0 & x \in \Omega, 
\end{cases}
\end{equation}
with $\rho_\lambda(t) = \rho(t,\lambda)$ and $\gamma_\lambda(t)=\gamma(t,\lambda)$. Now we derive some estimates for the above functions in terms of $\tau$. We first consider the case $n \geq 3$ and treat the case $n=2$ separately. 

\begin{prop}
\label{prop:estimates for various functions}
    Let $n\geq 3$. Then, for any $t_0 \in (0,T)$ and any $\tau \in (0,\tau_0)$ the functions $H(\cdot,y_\tau)$ and $\mathcal{R}(\cdot,y_\tau)$ satisfy the estimates
\begin{equation}
\label{h and r estimates for n geq 3}
\begin{aligned}
        &\|H(\cdot,y_\tau)\|_{L^2(\Omega)} \leq \tilde{C} \tau^{2-\frac{n}{2}} ,\, \|\nabla H (\cdot,y_\tau)\|_{L^2(\Omega)} \leq \tilde{C} \tau^{1-\frac{n}{2}} , \\
        &\|\mathcal{R}(\cdot,y_\tau)\|_{L^2(\Omega)} \leq \tilde{C} \tau^{\frac{5}{2}-\frac{n}{2}} , \, \|\nabla \mathcal{R} (\cdot,y_\tau)\|_{L^2(\Omega)} \leq \tilde{C} \tau^{\frac{3}{2}-\frac{n}{2}} .
\end{aligned}
\end{equation}
Further, the functions $v_\tau$ and $g_\tau$ satisfy the estimates
    \begin{equation}
            \|v_\tau\|_{H^1(\Omega')} \leq \tilde{C}, \, \|g_\tau\|_{H_0^{1/2,1/2}(S\times (0,T))} \leq \tilde{C}  \tau^{1-\frac{n}{2}}.
    \end{equation}
Here the constant $\tilde{C}=\tilde C(\Omega,S,T,A)>0$. 
\end{prop}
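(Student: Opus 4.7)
My plan is to establish the four estimates in order, each step feeding into the next. For the bounds on $H(\cdot,y_\tau)$, $\mathcal R(\cdot,y_\tau)$, and their gradients I will insert the pointwise estimates \eqref{reference estimate for H}--\eqref{reference estimate for R} and pass to polar coordinates centered at $y_\tau$. Since $\mathrm{dist}(y_\tau,\overline\Omega)=\tau$ and $\Omega$ is bounded with $\Omega\subset\Omega_*$, for any real exponent $p$ the template
\[
\int_{\Omega}|x-y_\tau|^{p}\,dx\ \leq\ |S^{n-1}|\int_{\tau}^{D}r^{p+n-1}\,dr,\qquad D=\mathrm{diam}(\Omega_*),
\]
is available. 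Substituting $p=2(2-n)$, $2(1-n)$, $2(5/2-n)$, and $2(3/2-n)$ and evaluating the elementary radial integral (whose value is dominated by the lower limit $r=\tau$) yields the four claimed bounds after taking square roots.

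For the $v_\tau$ bound, the key observation is that by the choice $\delta\leq\mathrm{dist}(x_0,\partial\Omega')$, the pole $y_\tau$ remains at distance at least $\delta$ from $\partial\Omega'$ for every $\tau\in(0,\delta)$. Hence $P(\,\cdot\,,y_\tau)|_{\partial\Omega'}\in\mathcal C^\infty(\partial\Omega')$ with seminorms bounded uniformly in $\tau$, and the standard $H^1$-estimate applied to the Dirichlet problem \eqref{defn of v_tau} for the uniformly elliptic operator $-\nabla\cdot(A\nabla\,\cdot\,)$ on $\Omega'$ delivers $\|v_\tau\|_{H^1(\Omega')}\leq C\|P(\,\cdot\,,y_\tau)\|_{H^{1/2}(\partial\Omega')}\leq\tilde C$.

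To handle $\|g_\tau\|_{H_0^{1/2,1/2}(S\times(0,T))}$ I will exploit the tensor-product structure $g_\tau(x,t)=\varphi_\tau(t)\psi_\tau(x)$, with $\psi_\tau:=(P(\,\cdot\,,y_\tau)-v_\tau)|_{\partial\Omega}$; since $v_\tau=P(\,\cdot\,,y_\tau)$ on $\partial\Omega'\supset\partial\Omega\setminus S$, the support condition $\supp(g_\tau)\subset S\times(0,T)$ is automatic. The norm decomposes into an $L^2(0,T;H^{1/2}(\partial\Omega))$-part, an $H^{1/2}(0,T;L^2(\partial\Omega))$-part, and the two weighted $L^2$ boundary-in-time contributions at $t=0$ and $t=T$. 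Since $\|\varphi_\tau\|_{L^2(0,T)}=1$, the first part equals $\|\psi_\tau\|_{H^{1/2}(\partial\Omega)}$, which by the trace theorem is controlled by $\|P(\,\cdot\,,y_\tau)\|_{H^1(\Omega)}+\|v_\tau\|_{H^1(\Omega')}$; the estimates already established bound this by $\tilde C\tau^{1-n/2}$, the dominant contribution.

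The main obstacle will be showing that the remaining three pieces are subordinate to this leading term. For $\tau$ small, $\supp(\varphi_\tau)\subset[t_0-a_\tau^{-1},t_0+a_\tau^{-1}]$ is strictly interior to $(0,T)$, so both weights $|t|^{-1/2}$ and $|t-T|^{-1/2}$ are bounded on that set, reducing the weighted terms to constant multiples of $\|g_\tau\|_{L^2(\Sigma)}$. Interpolation between $\|\varphi_\tau\|_{L^2}=1$ and $\|\partial_t\varphi_\tau\|_{L^2}\leq Ca_\tau$ yields $\|\varphi_\tau\|_{H^{1/2}(0,T)}\leq Ca_\tau^{1/2}=C|\ln\tau|^{1/32}$, and this slowly-growing logarithmic factor is the only obstruction. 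It will be absorbed by combining it with the sharper spatial estimate $\|\psi_\tau\|_{L^2(\partial\Omega)}\leq C\tau^{(3-n)/2}$, which I will derive by a direct computation in boundary-normal coordinates near $x_0$ together with the pointwise bounds on $P(\,\cdot\,,y_\tau)$; since $a_\tau^{1/2}\tau^{(3-n)/2}=\bigl(a_\tau^{1/2}\tau^{1/2}\bigr)\tau^{1-n/2}=o(\tau^{1-n/2})$ as $\tau\to 0$, all remaining contributions fall below $\tau^{1-n/2}$ and the proposition follows.
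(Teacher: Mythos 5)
Your treatment of the volume estimates for $H$, $\mathcal R$ and their gradients, and of the elliptic bound for $v_\tau$, coincides with the paper's. The genuine divergence is in the bound for $\|g_\tau\|_{H_0^{1/2,1/2}(S\times(0,T))}$: the paper simply invokes the continuity of the trace map from ${}_\tau H^1(Q)$ to ${}_\tau H^{1/2,1/2}(\Sigma)$ (Lemma \ref{lem:lifting}) applied to the natural space--time extension $\varphi_\tau(t)(P(x,y_\tau)-v_\tau(x))$, so that the boundary norm is controlled by $\|\varphi_\tau\nabla(P-v_\tau)\|_{L^2(Q)}+\|\partial_t\varphi_\tau(P-v_\tau)\|_{L^2(Q)}$ and the already-derived volume estimates finish the job in one line. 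You instead attack the anisotropic norm directly, exploiting the tensor structure $\varphi_\tau\otimes\psi_\tau$: this costs you two extra ingredients the paper never needs, namely the surface estimate $\|\psi_\tau\|_{L^2(\partial\Omega)}\lesssim\tau^{(3-n)/2}$ and the interpolation bound $\|\varphi_\tau\|_{H^{1/2}(0,T)}\lesssim a_\tau^{1/2}$, but it buys a sharper picture of which component of the norm is actually dominant (the $L^2_tH^{1/2}_x$ piece). Both routes are valid; yours is more self-contained but longer. Two caveats. First, for the weighted contributions $\||t|^{-1/2}g_\tau\|_{L^2(\Sigma)}$ and $\||t-T|^{-1/2}g_\tau\|_{L^2(\Sigma)}$ you should not bound the weight by a constant on $\mathrm{supp}(\varphi_\tau)$, since that constant behaves like $(t_0-a_{\tau_0}^{-1})^{-1/2}$ and degenerates as $t_0\to0$ or $t_0\to T$, whereas the Proposition claims $\tilde C=\tilde C(\Omega,S,T,A)$ uniformly in $t_0$ (and this uniformity is used later when the estimate is extended to all $t_0\in[0,T]$ by continuity); use instead the Hardy-type bound $\int_0^T t^{-1}\varphi_\tau^2\,dt\leq 2\|\varphi_\tau\|_{L^2}\|\partial_t\varphi_\tau\|_{L^2}\leq Ca_\tau$, which restores uniformity and is still absorbed by the leading term. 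Second, your surface estimate acquires a $|\ln\tau|^{1/2}$ correction at $n=3$ (where $\int_\tau^D s^{2-n}\,ds$ is logarithmic), and likewise the radial volume integrals for $\|H\|_{L^2}$ and $\|\mathcal R\|_{L^2}$ are \emph{not} dominated by the lower limit in low dimensions; this is the same harmless imprecision present in the paper's own proof, and it does not affect the final bound since everything is ultimately compared to the genuinely divergent gradient term $\tau^{1-n/2}$.
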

\begin{proof}
%In the below proof $\tilde{C}>0$ is a constant depending only on $\Omega$, $S$, $ T$ and $A$. 
To bound  $H(\cdot,y_\tau)$, by taking sufficiently large $R>0$ such that $\Omega \subset B(y_\tau,R)$ and noting that $\Omega \subset B(y_\tau,R)\setminus B(y_\tau,\tau)$ since dist$(y_\tau,\partial\Omega) = \tau$, and using  \eqref{reference estimate for H}, we obtain
$$
\label{estimate for H} 
\|H(\cdot,y_\tau)\|^2_{L^2(\Omega)} \leq \tilde{C}\int_{B(y_\tau,R)\setminus B(y_\tau,\tau)} |x-y_\tau|^{4-2n}  dx \leq \tilde{C} \int_\tau^R r^{4-2n} r^{n-1} dr .
$$
Hence, $\|H(\cdot,y_\tau)\|_{L^2(\Omega)} \leq \tilde{C} \tau^{2-\frac{n}{2}}$. The other estimates in \eqref{h and r estimates for n geq 3} follow similarly from \eqref{reference estimate for H} and \eqref{reference estimate for R}. For $v_\tau$, the definition \eqref{defn of v_tau} and a typical estimate  \cite[Section 8.2]{gilbarg1977elliptic} gives
$$
\label{estimate for v_tau}
   \|v_\tau\|_{H^1(\Omega')} \leq \tilde{C} \|P(\cdot,y_\tau)\|_{H^{1/2}(\partial\Omega')} \leq \tilde{C} ,
$$
since $\mathrm{dist}(y_\tau,\partial\Omega')>0$ and $P(\cdot,y_\tau) \in \mathcal{C}^2(\Omega_* \setminus \{y_\tau\})$. Lastly, for $g_\tau$, we first derive 
 \begin{equation}
 \label{estimates for varphi}
     \|\varphi_\tau\|_{L^2((0,T))} \leq \tilde{C} , \, \|\partial_t\varphi_\tau\|_{L^2((0,T))} \leq  \tilde{C} a_\tau .
 \end{equation}
Last, by applying Lemma \ref{lem:lifting}, the estimates \eqref{h and r estimates for n geq 3}, and the estimate for $v_\tau$, we have 
 \begin{align*}
     & \quad  \|g_\tau\|_{H_0^{1/2,1/2}(S\times (0,T))} \leq \tilde{C}\|\varphi_{\tau}(P(\cdot,y_\tau)-v_\tau) \|_{H^1(Q)} \\
        &\leq \tilde{C}(\|\varphi_\tau\nabla(P(\cdot,y_\tau)-v_\tau)\|_{L^2(Q)} + \|\partial_t\varphi_\tau(P(\cdot,y_\tau)-v_\tau)\|_{L^2(Q)}) \\
        & \leq \tilde{C}\left(\max(1,\tau^{1-\frac{n}{2}}) + a_\tau\max(1,\tau^{2-\frac{n}{2}})\right) \leq \tilde{C} \tau^{1-\frac{n}{2}}.
        \end{align*}
\end{proof}
\begin{prop}
\label{prop:h est.}
    Let $R>0$, and consider $\lambda \in [-R,R]$,  $t_0 \in (0,T)$, and $\tau \in (0,\tau_0)$. We can decompose $w_{\lambda,\tau}$ by
    \begin{equation}\label{dede}
        w_{\lambda,\tau}(x,t) = \varphi_\tau (t) (P(x,y_\tau) - v_\tau(x)) + h_{\lambda,\tau}(x,t) ,\quad (x,t)\in Q,
    \end{equation}
    with $h_{\lambda,\tau} \in H^1(0,T;L^2(\Omega))\cap L^2(0,T;H^2(\Omega)\cap H^1_0(\Omega))$. Moreover, for $n\geq 3$, we have 
\begin{equation}
\label{eqn: est for h,dtw and B}
\begin{aligned}
     &\| h_{\lambda,\tau}\|_{H^1(Q)} \leq Ca_\tau(1+\tau^{2-\frac{n}{2}}), \, \|\partial_t w_{\lambda,\tau}\|_{L^2(Q)} \leq Ca_\tau(1+\tau^{2-\frac{n}{2}}), \\
&\|\nabla(\varphi_\tau (\mathcal{R}(\cdot,y_\tau)-v_\tau) + h_{\lambda,\tau})) \|_{L^2(Q)} \leq C( a_\tau \tau^{2-\frac{n}{2}} + \tau^{\frac{3}{2}-\frac{n}{2}} ) ,
    \end{aligned}
\end{equation}
with the constant $C=C(\Omega,\gamma,S,T,A,R,\rho)>0$.
\end{prop}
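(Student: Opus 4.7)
The plan is to introduce $h_{\lambda,\tau}$ directly via \eqref{dede}, that is, to set $h_{\lambda,\tau}:=w_{\lambda,\tau}-\varphi_\tau(P(\cdot,y_\tau)-v_\tau)$, and to identify a clean parabolic IBVP it solves. From $w_{\lambda,\tau}|_\Sigma = g_\tau = \varphi_\tau(P(\cdot,y_\tau)-v_\tau)|_\Sigma$ I get $h_{\lambda,\tau}|_\Sigma = 0$, and $h_{\lambda,\tau}(\cdot,0)=0$ since $\mathrm{supp}(\varphi_\tau)\subset(0,T)$ for $\tau<\tau_0$. Applying $\rho_\lambda\partial_t-\gamma_\lambda\nabla\cdot(A\nabla\cdot)$ to \eqref{dede} and using the two key facts $\nabla\cdot(A(x)\nabla P(x,y_\tau))=0$ in $\Omega$ (as $y_\tau\in\Omega_*\setminus\overline{\Omega}$) and $\nabla\cdot(A(x)\nabla v_\tau)=0$ in $\Omega\subset\Omega'$, one checks that $h_{\lambda,\tau}$ solves a linear parabolic IBVP with zero Dirichlet data, zero initial data, and source
\begin{equation*}
F_\tau(x,t):=-\rho_\lambda(t)\partial_t\varphi_\tau(t)\bigl(P(x,y_\tau)-v_\tau(x)\bigr)\in L^2(Q).
\end{equation*}

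The classical parabolic theory of \cite[Chapter 4, Theorem 6.1]{LM2} then yields both the claimed regularity $h_{\lambda,\tau}\in H^1(0,T;L^2(\Omega))\cap L^2(0,T;H^2(\Omega)\cap H^1_0(\Omega))$ and the energy bound $\|h_{\lambda,\tau}\|_{H^1(Q)}\leq C\|F_\tau\|_{L^2(Q)}$. Separating variables and plugging in $\|\partial_t\varphi_\tau\|_{L^2(0,T)}\leq Ca_\tau$ from \eqref{estimates for varphi}, together with $\|P(\cdot,y_\tau)\|_{L^2(\Omega)}\leq C\tau^{2-n/2}$ and $\|v_\tau\|_{L^2(\Omega)}\leq C$ from Proposition \ref{prop:estimates for various functions}, gives the first estimate in \eqref{eqn: est for h,dtw and B}. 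The second estimate then follows by differentiating \eqref{dede} in $t$ (since $P(\cdot,y_\tau)$ and $v_\tau$ are $t$-independent) and combining the $L^2(Q)$-bound on $\partial_t h_{\lambda,\tau}$ just obtained with the direct bound on $\partial_t\varphi_\tau(P-v_\tau)$.

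For the third and most subtle estimate, the identity
\begin{equation*}
\varphi_\tau(\mathcal{R}(\cdot,y_\tau)-v_\tau)+h_{\lambda,\tau}=w_{\lambda,\tau}-\varphi_\tau H(\cdot,y_\tau)
\end{equation*}
shows that the task is to control the $L^2_{t,x}$-gradient of the part of $w_{\lambda,\tau}$ beyond the leading singularity $\varphi_\tau H(\cdot,y_\tau)$. I split $h_{\lambda,\tau}=h^{(1)}_{\lambda,\tau}+h^{(2)}_{\lambda,\tau}$ by linearity according to $P=H+\mathcal{R}$: $h^{(1)}_{\lambda,\tau}$ has source $-\rho_\lambda\partial_t\varphi_\tau H(\cdot,y_\tau)$ and $h^{(2)}_{\lambda,\tau}$ has source $-\rho_\lambda\partial_t\varphi_\tau(\mathcal{R}(\cdot,y_\tau)-v_\tau)$, both with zero boundary and initial data. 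The $H$-part directly gives $\|\nabla h^{(1)}_{\lambda,\tau}\|_{L^2(Q)}\leq Ca_\tau\tau^{2-n/2}$ by the above energy argument, and $\|\varphi_\tau\nabla(\mathcal{R}-v_\tau)\|_{L^2(Q)}\leq C(\tau^{3/2-n/2}+1)\leq C\tau^{3/2-n/2}$ using $\tau\leq 1$ and Proposition \ref{prop:estimates for various functions}.

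The main obstacle is to bound $\|\nabla h^{(2)}_{\lambda,\tau}\|_{L^2(Q)}$ sharply enough to fit within $C(a_\tau\tau^{2-n/2}+\tau^{3/2-n/2})$, since the naive $L^2$-source energy bound produces an extra $a_\tau$ factor that would spoil the estimate. The plan is to treat $k^{(2)}:=\varphi_\tau(\mathcal{R}-v_\tau)+h^{(2)}_{\lambda,\tau}$ as a single quantity: a direct computation, again using $\nabla\cdot(A\nabla P)=0$ and $\nabla\cdot(A\nabla v_\tau)=0$ in $\Omega$, shows that $k^{(2)}$ solves
\begin{equation*}
\rho_\lambda\partial_t k^{(2)}-\gamma_\lambda\nabla\cdot(A\nabla k^{(2)})=\gamma_\lambda\varphi_\tau\nabla\cdot(A(x)\nabla H(x,y_\tau))\quad\text{in }Q,
\end{equation*}
with the smooth nonzero boundary data $\varphi_\tau(\mathcal{R}-v_\tau)|_\Sigma$ and zero initial datum. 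An energy estimate on $k^{(2)}$ — after lifting the boundary data in the spirit of Lemma \ref{lem:lifting} — combined with the Kalf parametrix cancellation that makes $\nabla\cdot(A(x)\nabla H(x,y_\tau))$ less singular than a naive derivative count on $H$ would suggest (due to the $A(y_\tau)$-freezing in the construction of $H$) then yields $\|\nabla k^{(2)}\|_{L^2(Q)}\leq C\tau^{3/2-n/2}$. Summing with the bound on $\nabla h^{(1)}_{\lambda,\tau}$ gives the third estimate, the fine exploitation of the parametrix cancellation being the technical crux of this last step.
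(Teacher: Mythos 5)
Your treatment of the decomposition, of the IBVP solved by $h_{\lambda,\tau}$ (zero Dirichlet and initial data, source $F_\tau=-\rho_\lambda\partial_t\varphi_\tau\,(P(\cdot,y_\tau)-v_\tau)$, using that $\nabla\cdot(A\nabla P(\cdot,y_\tau))$ and $\nabla\cdot(A\nabla v_\tau)$ vanish on $\Omega$), and of the first two estimates in \eqref{eqn: est for h,dtw and B} coincides with the paper's argument; the paper invokes a standard parabolic energy estimate for $\|h_{\lambda,\tau}\|_{H^1(Q)}\leq C\|F_\tau\|_{L^2(Q)}$ exactly as you do.

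The divergence is in the third estimate, whose proof the paper declares to be ``derived in the same way'', i.e.\ by the triangle inequality $\|\nabla(\varphi_\tau(\mathcal{R}-v_\tau)+h_{\lambda,\tau})\|_{L^2(Q)}\leq\|\varphi_\tau\|_{L^2((0,T))}(\|\nabla\mathcal{R}(\cdot,y_\tau)\|_{L^2(\Omega)}+\|\nabla v_\tau\|_{L^2(\Omega)})+\|h_{\lambda,\tau}\|_{H^1(Q)}$ together with Proposition \ref{prop:estimates for various functions} and the bound already obtained for $h_{\lambda,\tau}$. You are right that this yields $C(a_\tau(1+\tau^{2-\frac{n}{2}})+\tau^{\frac{3}{2}-\frac{n}{2}})$ rather than the stated right-hand side; however the two coincide for $n\geq4$ (since then $1+\tau^{2-\frac n2}\leq 2\tau^{2-\frac n2}$), so the ``spoiling'' you describe concerns only $n=3$, and even there the weaker bound is amply sufficient where the proposition is used, because in the proof of Theorem \ref{thm:est for gamma} every occurrence of $\|\nabla B\|_{L^2(Q)}$ is multiplied by factors that absorb the stray $a_\tau$ after division by $\tau^{2-n}$. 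More importantly, the replacement argument you propose does not close. First, the energy estimate for $k^{(2)}$ requires lifting its nonzero boundary datum $\varphi_\tau(\mathcal{R}-v_\tau)|_\Sigma$, and any lifting in the spirit of Lemma \ref{lem:lifting} contributes a term of order $\|\partial_t\varphi_\tau\|_{L^2((0,T))}\|\mathcal{R}(\cdot,y_\tau)-v_\tau\|_{L^2(\Omega)}\sim a_\tau$ (for $n=3$) to the right-hand side --- precisely the term you set out to eliminate; lifting by $\varphi_\tau(\mathcal{R}-v_\tau)$ itself merely returns you to $h^{(2)}_{\lambda,\tau}$ and its $L^2$ source of size $a_\tau$. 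Second, the ``Kalf parametrix cancellation'' is asserted rather than proved: the estimates \eqref{reference estimate for R} control $\mathcal{R}$ and $\nabla_x\mathcal{R}$ only, and neither the paper nor your argument supplies a quantitative bound on $\nabla\cdot(A\nabla H(\cdot,y_\tau))=-\nabla\cdot(A\nabla\mathcal{R}(\cdot,y_\tau))$. The safe resolution is the paper's elementary route, accepting the extra additive $Ca_\tau$ in dimension $3$, which is harmless downstream; as written, your final step is a gap.
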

\begin{proof}
% In the below proof $C>0$ is a constant depending only on $\Omega, \gamma, S,T,A,R,\rho$  that may change from line to line. 
Note that $h_{\lambda,\tau}$ solves the IBVP
$$
       \begin{cases}
           \rho_\lambda(t) \partial_th_{\lambda,\tau} - \gamma_\lambda(t)\nabla \cdot (A(x) \nabla h_{\lambda,\tau}) = - \rho_\lambda(t)   \partial_t\varphi_\tau(t)  (P(x,y_\tau) - v_\tau(x))  & \text{in } Q, \\
           h_{\lambda,\tau} = 0 & \text{on } \Sigma, \\
          h_{\lambda,\tau}(x,0) = 0
           & x \in \Omega, 
       \end{cases}
$$
where the first equation follows since $\nabla \cdot (A(x)\nabla v_\tau)$ and $\nabla_x \cdot (A(x) \nabla_x P(x,y_\tau))$ both vanish on $Q$; and  the third equation follows from the assumption $\mathrm{supp}(\varphi_\tau) \subset (0,T)$. In view of \cite[Chapter 4, Theorem 6.1]{LM2}, $ \rho_\lambda \partial_t \varphi_\tau (P(\cdot,y_\tau)-v_\tau)\in L^2(Q)$, which implies 
$h_{\lambda,\tau} \in H^1(0,T;L^2(\Omega))\cap L^2(0,T;H^2(\Omega))$.
In addition, by \cite[Section 7, Theorem 5]{evans2022partial}, we get
$$
    \| h_{\lambda,\tau}\|_{H^1(Q)} \leq C\| \rho_\lambda \partial_t  \varphi_\tau  (P(\cdot,y_\tau) - v_\tau)\|_{L^2(Q)} \leq Ca_\tau\|P(\cdot,y_\tau)-v_\tau\|_{L^2(\Omega)} ,
$$
where the second inequality follows from the regularity of $\rho$ and  \eqref{estimates for varphi}. Last, by Proposition \ref{prop:estimates for various functions}, we obtain the  estimate for $h_{\lambda,\tau}$. Next, to bound $\partial_tw_{\lambda,\tau}$, we derive 
$$
\begin{aligned}
    &\quad\|\partial_tw_{\lambda,\tau}\|_{L^2(Q)}  = \|\partial_t\varphi_\tau (P(\cdot,y_\tau) - v_\tau) + \partial_th_{\lambda,\tau}\|_{L^2(Q)}
    \\ 
    & \leq Ca_\tau(\|H(\cdot,y_\tau)\|_{L^2(Q)} + \|\mathcal{R}(\cdot,y_\tau)\|_{L^2(Q)} + 
    \|v_\tau\|_{L^2(Q)} )+ 
    \|h_{\lambda,\tau}\|_{H^1(Q)} \\
    & \leq Ca_\tau(\tau^{2-\frac{n}{2}} + \tau^{\frac{5}{2}-\frac{n}{2}} + 1 + 1+\tau^{2-\frac{n}{2}}) \leq Ca_\tau(1+\tau^{2-\frac{n}{2}}) .
\end{aligned}
$$
The last estimate is derived in the same way and the proof is thus omitted. 
\end{proof}

We also consider $\overline{w}_{\lambda,\tau}$ the solution to the adjoint problem
\begin{equation}
\label{defn of w*_lambda,rho,tau}
            \begin{cases}
-\partial_t(\rho_\lambda(t)  \overline{w}_{\lambda,\tau} )- \gamma_\lambda(t)\nabla\cdot(A(x)\nabla \overline{w}_{\lambda,\tau}) = 0   & \text{in } Q ,
\\
\overline{w}_{\lambda,\tau} = g_\tau(x,t) & \text{on } \Sigma , \\
\overline{w}_{\lambda,\tau}(x,T) = 0 & x \in \Omega.
\end{cases}
\end{equation}
The next proposition asserts that $\overline{w}_{\lambda,\tau}$ can be split similarly to $w_{\lambda,\tau}$, and moreover satisfies similar estimates to Proposition \ref{prop:h est.}. The proof is analogous and thus is omitted. 
\begin{prop}
\label{prop:hbar est.}
    Let $R>0$, and consider $\lambda \in [-R,R]$, $t_0 \in (0,T)$, and $\tau \in (0, \tau_0)$. We may decompose $\overline{w}_{\lambda,\tau}$ into 
    \begin{equation}
        \overline{w}_{\lambda,\tau}(x,t) = \varphi_\tau (t) (P(x,y_\tau) - v_\tau(x)) + \overline{h}_{\lambda,\tau}(x,t),\quad (x,t)\in Q.
    \end{equation}
     with $\overline{h}_{\lambda,\tau}\in H^1(0,T;L^2(\Omega))\cap L^2(0,T;H^2(\Omega)\cap H^1_0(\Omega))$.
Moreover, for $n\geq 3$, it holds that
\begin{equation}
\begin{aligned}
\label{eqn: est for h adj, w adj and B adj}
     &\| \overline{h}_{\lambda,\tau}\|_{H^1(Q)} \leq Ca_\tau(1+\tau^{2-\frac{n}{2}}),\,\|\overline{w}_{\lambda,\tau}\|_{L^2(Q)} \leq Ca_\tau(1+\tau^{2-\frac{n}{2}}), \\
   & \|\nabla(\varphi_\tau (\mathcal{R}(\cdot,y_\tau) -v_\tau) + \overline{h}_{\lambda,\tau}) \|_{L^2(Q)} \leq C( a_\tau \tau^{2-\frac{n}{2}} + \tau^{\frac{3}{2}-\frac{n}{2}} ) ,
    \end{aligned}
\end{equation}
with $C=C(\Omega,S,T,A,R,\gamma,\rho)>0$.
\end{prop}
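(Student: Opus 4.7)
The plan is to mimic the proof of Proposition \ref{prop:h est.} with the standard time-reversal trick that turns the backward adjoint problem \eqref{defn of w*_lambda,rho,tau} into a forward parabolic IBVP, so that the well-posedness and energy estimates of \cite[Chapter 4, Theorem 6.1]{LM2} and \cite[Section 7, Theorem 5]{evans2022partial} apply in the same function spaces. Concretely, define $\overline{h}_{\lambda,\tau}:=\overline{w}_{\lambda,\tau}-\varphi_\tau(P(\cdot,y_\tau)-v_\tau)$. Because $\mathrm{supp}(\varphi_\tau)\subset(0,T)$, the lift $\varphi_\tau(P(\cdot,y_\tau)-v_\tau)$ matches the Dirichlet data $g_\tau$ of $\overline{w}_{\lambda,\tau}$ on $\Sigma$ and vanishes at $t=T$, so $\overline{h}_{\lambda,\tau}$ has zero lateral and terminal boundary data.

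Next, I substitute the decomposition into the adjoint equation and use that both $\nabla\cdot(A\nabla P(x,y_\tau))=0$ and $\nabla\cdot(A\nabla v_\tau)=0$ on $\Omega\subset\Omega'$. This cancels the elliptic part of $\varphi_\tau(P-v_\tau)$ and yields that $\overline{h}_{\lambda,\tau}$ solves
\begin{equation*}
\begin{cases}
-\partial_t(\rho_\lambda\overline{h}_{\lambda,\tau})-\gamma_\lambda\nabla\cdot(A\nabla \overline{h}_{\lambda,\tau})=\partial_t(\rho_\lambda\varphi_\tau)(P(\cdot,y_\tau)-v_\tau) & \text{in }Q,\\
\overline{h}_{\lambda,\tau}=0 & \text{on }\Sigma,\\
\overline{h}_{\lambda,\tau}(x,T)=0 & x\in\Omega.
\end{cases}
\end{equation*}
The right-hand side lies in $L^2(Q)$ by the regularity of $\rho$, the bound $\|\partial_t\varphi_\tau\|_{L^2((0,T))}\leq \tilde C a_\tau$ from \eqref{estimates for varphi}, and the $L^2(\Omega)$ bounds on $P(\cdot,y_\tau)$ and $v_\tau$ given in Proposition \ref{prop:estimates for various functions}. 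Applying the cited well-posedness result (after time reversal $s=T-t$) gives $\overline{h}_{\lambda,\tau}\in H^1(0,T;L^2(\Omega))\cap L^2(0,T;H^2(\Omega)\cap H^1_0(\Omega))$ together with the energy estimate
\begin{equation*}
\|\overline{h}_{\lambda,\tau}\|_{H^1(Q)}\leq C\|\partial_t(\rho_\lambda\varphi_\tau)(P(\cdot,y_\tau)-v_\tau)\|_{L^2(Q)}\leq Ca_\tau\|P(\cdot,y_\tau)-v_\tau\|_{L^2(\Omega)}\leq Ca_\tau(1+\tau^{2-\frac{n}{2}}),
\end{equation*}
where the dominant contribution comes from $\rho_\lambda\partial_t\varphi_\tau$ and the last step uses Proposition \ref{prop:estimates for various functions}.

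The second estimate $\|\overline{w}_{\lambda,\tau}\|_{L^2(Q)}\leq Ca_\tau(1+\tau^{2-n/2})$ follows by the triangle inequality from the decomposition, combining $\|\varphi_\tau\|_{L^2((0,T))}\leq\tilde C$ with the $L^2$ bounds on $H(\cdot,y_\tau)$, $\mathcal{R}(\cdot,y_\tau)$ and $v_\tau$, and the $H^1(Q)$ bound on $\overline{h}_{\lambda,\tau}$ just obtained. For the third estimate, the singular contribution $\varphi_\tau\nabla H(\cdot,y_\tau)$ has been removed by design, so only $\nabla\mathcal{R}(\cdot,y_\tau)$, $\nabla v_\tau$ and $\nabla\overline{h}_{\lambda,\tau}$ remain. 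The gradient bounds in Proposition \ref{prop:estimates for various functions} control the first two contributions by $C(\tau^{3/2-n/2}+1)$, and the $H^1(Q)$ bound on $\overline{h}_{\lambda,\tau}$ controls the last, yielding the desired $C(a_\tau\tau^{2-n/2}+\tau^{3/2-n/2})$ after absorbing lower-order powers of $\tau$ into the two dominant ones. There is no genuine obstacle beyond carefully tracking that the extra lower-order term $(\partial_t\rho_\lambda)\varphi_\tau(P-v_\tau)$ appearing on the right-hand side of the backward equation is harmless and that time reversal does not affect the constant in the energy estimate because of the regularity hypothesis $\rho_\lambda\in\mathcal{C}^1([0,T])$ together with \eqref{positivity condition of gamma,rho}.
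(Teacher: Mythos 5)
Your proposal is correct and is precisely the argument the paper intends: the paper omits this proof as ``analogous'' to Proposition \ref{prop:h est.}, and your reduction via the time reversal $s=T-t$, the cancellation of the elliptic part using the $A$-harmonicity of $P(\cdot,y_\tau)$ and $v_\tau$ on $\Omega$, the resulting source $\partial_t(\rho_\lambda\varphi_\tau)(P(\cdot,y_\tau)-v_\tau)$ with homogeneous lateral and terminal data, and the energy estimate for $\overline{h}_{\lambda,\tau}$ reproduce that proof faithfully. The only caveat — inherited verbatim from the paper's own Proposition \ref{prop:h est.} — is that for $n=3$ the final ``absorption'' step in the third estimate is slightly off, since $\|\nabla\overline{h}_{\lambda,\tau}\|_{L^2(Q)}\leq Ca_\tau$ with $a_\tau=|\ln\tau|^{1/16}\to\infty$ is not dominated by $C(a_\tau\tau^{1/2}+1)$; this imprecision is harmless for the way the estimate is used in Sections \ref{sec: proof of gamma} and \ref{sec: proof of rho}.
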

Now, consider the case $n=2$. The main difference from  $n \geq 3$  is that the fundamental solution $H$ has a logarithmic bound. Nevertheless, the proof is similar and hence omitted.
\begin{prop}
\label{prop:case of n=2}
    Let $n=2$ and let $R>0$, and consider $\lambda\in [-R,R]$, $t_0 \in (0,T)$, and $\tau\in (0,\tau_0)$. Then, for the functions $H$ and $\mathcal{R}$, we have 
   \begin{equation}
   \begin{aligned}
        &\|H(\cdot,y_\tau)\|_{L^2(\Omega)} \leq \tilde{C} \tau |\ln \tau|,\,  \|\nabla H (\cdot,y_\tau)\|_{L^2(\Omega)} \leq \tilde{C} |\ln\tau|^{\frac{1}{2}} ,\\
        &\|\mathcal{R}(\cdot,y_\tau)\|_{L^2(\Omega)} \leq \tilde{C} \tau^{\frac{3}{2}} , \, \|\nabla \mathcal{R} (\cdot,y_\tau)\|_{L^2(\Omega)} \leq \tilde{C} \tau^{\frac{1}{2}} .
        \end{aligned}
    \end{equation}
For the functions $v_\tau$ and $g_\tau$, we have
    \begin{equation}
            \|v_\tau\|_{H^1(\Omega')} \leq \tilde{C} , \,  \|g_\tau\|_{H_0^{1/2,1/2}(S\times (0,T))} \leq \tilde{C}  |\ln\tau|^{\frac{1}{2}}.
    \end{equation}
Here the constant $\tilde{C}=\tilde C(\Omega,S, T,A)>0$. Moreover, by Proposition \ref{prop:h est.}, we may decompose $w_{\lambda,\tau} = \varphi_\tau(P(\cdot,y_\tau) - v_\tau) + h_{\lambda,\tau}$ (resp. by Proposition \ref{prop:hbar est.} $\overline{w}_{\lambda,\tau}=\varphi_\tau(P(\cdot,y_\tau)-v_\tau)+\overline{h}_{\lambda,\tau}$), and there exists $C=C(\Omega,\gamma,S,T,A,R,\rho)>0$ such that
\begin{equation}
\begin{aligned}
    & \| h_{\lambda,\tau}\|_{H^1(Q)}+  \|\partial_t w_{\lambda,\tau}\|_{L^2(Q)} +  \|\nabla(\varphi_\tau (\mathcal{R}(\cdot,y_\tau)-v_\tau) + h_{\lambda,\tau})) \|_{L^2(Q)}  \leq Ca_\tau, \\
    &\| \overline{h}_{\lambda,\tau}\|_{H^1(Q)}+  \|\overline{w}_{\lambda,\tau}\|_{L^2(Q)} +  \|\nabla(\varphi_\tau (\mathcal{R}(\cdot,y_\tau)-v_\tau) + \overline{h}_{\lambda,\tau})) \|_{L^2(Q)}  \leq Ca_\tau. 
\end{aligned}
\end{equation}
\end{prop}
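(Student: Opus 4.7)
The plan is to follow, almost line by line, the approach used for Propositions \ref{prop:estimates for various functions}, \ref{prop:h est.}, and \ref{prop:hbar est.}, but replacing the power-law size bounds \eqref{reference estimate for H} and \eqref{reference estimate for R} with their two-dimensional counterparts: the logarithmic bound $|H(x,y)| \leq C|\ln|x-y||$, the radial bound $|\nabla_x H(x,y)| \leq C|x-y|^{-1}$, and the Hölder-type bounds $|\mathcal{R}(x,y)| \leq C|x-y|^{1/2}$, $|\nabla_x \mathcal{R}(x,y)| \leq C|x-y|^{-1/2}$ that hold when $n=2$. The strategy is then to plug these into the same polar-coordinate integrals and the same parabolic energy estimate as before.

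First I would handle the four elliptic bounds on $H(\cdot,y_\tau)$, $\nabla H(\cdot,y_\tau)$, $\mathcal{R}(\cdot,y_\tau)$, $\nabla \mathcal{R}(\cdot,y_\tau)$. Using $\Omega \subset B(y_\tau,R) \setminus B(y_\tau,\tau)$ and switching to polar coordinates centered at $y_\tau$, the $L^2$-norms reduce to one-dimensional radial integrals of the form $\int_\tau^R |\ln r|^2 r\, dr$, $\int_\tau^R r^{-2} r\, dr$, $\int_\tau^R r\cdot r\, dr$, and $\int_\tau^R r^{-1}r\, dr$ respectively. These yield the announced bounds of type $\tau|\ln\tau|$, $|\ln\tau|^{1/2}$, $\tau^{3/2}$, $\tau^{1/2}$ after taking square roots and absorbing lower-order terms into $\tilde C$. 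The bound for $v_\tau$ follows from standard elliptic boundary regularity applied to \eqref{defn of v_tau}, since $P(\cdot,y_\tau)$ remains smooth near $\partial\Omega'$ (as $\mathrm{dist}(y_\tau,\partial\Omega') \geq \delta > 0$), and the bound for $g_\tau$ comes from Lemma \ref{lem:lifting}, the product rule applied to $\varphi_\tau(P(\cdot,y_\tau)-v_\tau)$, and the estimates \eqref{estimates for varphi} for $\varphi_\tau$ and $\partial_t\varphi_\tau$; here the dominant contribution comes from $\|\varphi_\tau \nabla H(\cdot,y_\tau)\|_{L^2(Q)}$, which produces the final $|\ln\tau|^{1/2}$.

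Next, for the decomposition $w_{\lambda,\tau} = \varphi_\tau(P(\cdot,y_\tau)-v_\tau) + h_{\lambda,\tau}$ and its adjoint analogue $\overline{w}_{\lambda,\tau}$, I would repeat verbatim the arguments of Propositions \ref{prop:h est.} and \ref{prop:hbar est.}: $h_{\lambda,\tau}$ solves a parabolic IBVP with zero initial/boundary data and right-hand side $-\rho_\lambda(t)\partial_t\varphi_\tau(t)(P(x,y_\tau)-v_\tau(x))$, so \cite[Chapter 3, Theorem 4.1]{LM1} (or \cite[Section 7, Theorem 5]{evans2022partial}) yields $\|h_{\lambda,\tau}\|_{H^1(Q)} \leq Ca_\tau\|P(\cdot,y_\tau)-v_\tau\|_{L^2(\Omega)}$. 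Plugging in the two-dimensional bound $\|H(\cdot,y_\tau)\|_{L^2(\Omega)} \leq \tilde C \tau|\ln\tau| \leq \tilde C a_\tau$ together with the bounded contributions from $\mathcal{R}(\cdot,y_\tau)$ and $v_\tau$, the right-hand side becomes $Ca_\tau$. The same argument, run for the time-reversed problem \eqref{defn of w*_lambda,rho,tau}, yields the bound on $\overline{h}_{\lambda,\tau}$. The estimates on $\|\partial_t w_{\lambda,\tau}\|_{L^2(Q)}$ and $\|\overline{w}_{\lambda,\tau}\|_{L^2(Q)}$ then follow by expanding the decomposition and applying the previously established bounds, while the estimates on $\|\nabla(\varphi_\tau(\mathcal{R}(\cdot,y_\tau)-v_\tau) + h_{\lambda,\tau})\|_{L^2(Q)}$ (and its adjoint counterpart) combine the $\mathcal{R}$ bounds with the $h_{\lambda,\tau}$ bound.

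The main obstacle, modest though it is, lies in carefully tracking how the logarithmic singularity of $H$ in two dimensions interacts with the factor $a_\tau = |\ln\tau|^{1/16}$ coming from $\varphi_\tau$: unlike the $n\geq 3$ case, here every dominant term involves some power of $|\ln\tau|$, and one must verify that these logarithmic factors can be absorbed into a single $a_\tau$ (up to a multiplicative constant) on the right-hand sides of the final estimates. Once this bookkeeping is done, the conclusions follow exactly as in the higher-dimensional cases, which is why the authors declare the proof to be similar and omit it.
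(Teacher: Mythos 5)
Your overall architecture is the right one and matches what the authors intend (they omit the proof precisely because it is the $n\geq 3$ argument with the two-dimensional kernel bounds substituted in): the $v_\tau$, $g_\tau$, $h_{\lambda,\tau}$, $\overline h_{\lambda,\tau}$ and final $Ca_\tau$ estimates all go through exactly as you describe. However, there is a concrete failure in your first step. Three of the four "elliptic" bounds you claim to derive do not follow from the radial integrals you write down, because those integrals are $O(1)$, not $o(1)$. Explicitly, $\int_\tau^R |\ln r|^2\, r\, dr$, $\int_\tau^R r\cdot r\, dr$ and $\int_\tau^R r^{-1}\cdot r\, dr$ all converge to positive constants as $\tau\to 0$ (only $\int_\tau^R r^{-2}\, r\, dr=\ln(R/\tau)$ genuinely diverges and yields the stated $|\ln\tau|^{1/2}$ for $\|\nabla H(\cdot,y_\tau)\|_{L^2(\Omega)}$). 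So your computation gives $\|H(\cdot,y_\tau)\|_{L^2(\Omega)}\leq \tilde C$, $\|\mathcal R(\cdot,y_\tau)\|_{L^2(\Omega)}\leq\tilde C$, $\|\nabla\mathcal R(\cdot,y_\tau)\|_{L^2(\Omega)}\leq\tilde C$, and cannot give the decaying bounds $\tau|\ln\tau|$, $\tau^{3/2}$, $\tau^{1/2}$. Indeed those decaying bounds are false as stated: since $y_\tau\to x_0\in\partial\Omega$, each of these norms converges to the corresponding norm of the (nontrivial) kernel centered at $x_0$, hence stays bounded away from zero. The same phenomenon already occurs in Proposition \ref{prop:estimates for various functions} for $n=3,4$, which is why the paper systematically writes $\max(1,\tau^{2-\frac{n}{2}})$ and $(1+\tau^{2-\frac{n}{2}})$ in the places where these quantities are actually used.

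The consequence for your write-up is that asserting the integrals "yield the announced bounds" is not a derivation; you should instead prove the $O(1)$ bounds (plus the genuine $|\ln\tau|^{1/2}$ bound on $\nabla H$) and observe that these are all that the remainder of the proposition consumes: in the $g_\tau$ estimate the dominant term is $\|\varphi_\tau\nabla H\|_{L^2(Q)}\leq C|\ln\tau|^{1/2}$ as you say; in the $h_{\lambda,\tau}$ estimate one only needs $\|P(\cdot,y_\tau)-v_\tau\|_{L^2(\Omega)}\leq C$ to get $Ca_\tau$; and in the $\nabla B$-type estimates one only needs $\|\nabla\mathcal R(\cdot,y_\tau)\|_{L^2(\Omega)}+\|\nabla v_\tau\|_{L^2(\Omega')}\leq C\leq Ca_\tau$. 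With that correction the final two displayed estimates of the proposition, which are the only ones invoked in the $n=2$ branch of the proof of Theorem \ref{thm:est for gamma}, are fully justified; but as written your first paragraph claims inequalities that are both underivable by your method and false.
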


\section{Proof of the first stability estimate}
\label{sec: proof of gamma}
 
In this section, we prove the Lipschitz stability estimate \eqref{est1} of $\gamma$ stated in Theorem \ref{thm:est for gamma}. For this purpose, we will use the singular solutions constructed in Section \ref{section: construction of special solutions}. 

\begin{proof}[Proof of Theorem \ref{thm:est for gamma}]   Throughout this proof we denote by $P$, $H$ and $\mathcal{R}$ the maps $x\mapsto P(x,y_\tau)$, $x\mapsto H(x,y_\tau)$ and $x\mapsto \mathcal{R}(x,y_\tau)$ and, we write  $\gamma_\lambda = \gamma^1_{\lambda} - \gamma^2_{\lambda}$, $\rho_\lambda = \rho^1_{\lambda} - \rho^2_{\lambda}$. We will also denote $\Lambda_\lambda = \Lambda_{\lambda,\gamma^1,\rho^1}- \Lambda_{\lambda,\gamma^2,\rho^2}$. In view of Proposition \ref{prop: derivative of N is Lambda}, for every  $g \in J_S$, we have $\partial_s \mathcal{N_\lambda}(sg)\mid_{s=0} = \Lambda_{\lambda}(g)$, and by definition $\mathcal{N}_\lambda(0) = 0$. Therefore, we obtain
\begin{align*} 
&\quad \sup_{g\in B_1} \limsup_{k \to +\infty} \left(k\left\|\mathcal{N}_\lambda\left(\frac{g}{k}\right)\right\|_{H^{-1/2,-1/2}(S\times(0,T))}\right) \\
&=\sup_{g\in B_1}  \left\|\lim_{k \to +\infty}\left(\frac{\mathcal{N}_\lambda\left(\frac{g}{k}\right)-\mathcal{N}_\lambda(0)}{\frac{1}{k}}\right)\right\|_{H^{-1/2,-1/2}(S\times(0,T))}
= \sup_{g\in B_1} \|\Lambda_{\lambda}(g) \|_{H^{-1/2,-1/2}(S\times (0,T))}.
\end{align*}
Combining this with the density of $J_S$ in $ H^{1/2,1/2}_0(S\times(0,T))$ and fixing
$\mathbb B_1 = \{g \in  H^{1/2,1/2}_0(S\times(0,T)):\  \|g\|_{H^{1/2,1/2}_0(S\times(0,T))} \leq 1\}$ gives
$$
     \sup_{g\in B_1} \limsup_{k \to +\infty} \left(k\left\|\mathcal{N}_\lambda\left(\frac{g}{k}\right)\right\|_{H^{-1/2,-1/2}(S\times(0,T))}\right) =\sup_{g\in \mathbb B_1} \|\Lambda_{\lambda}(g) \|_{H^{-1/2,-1/2}(S\times (0,T))}= \|\Lambda_\lambda\|_{op} ,
$$
		where $\|\cdot\|_{op}$ denotes the operator norm from $H_0^{1/2,1/2}(S\times(0,T))$ to $H^{-1/2,-1/2}(S\times(0,T))$. By Proposition \ref{prop:extension of DN map and continuity},  $\sup_{\lambda \in [-R,R]} \|\Lambda_\lambda\|_{op} < \infty$. Thus by the regularity of $\gamma$, it remains to show 
    \begin{equation}\label{estlip}
        |\gamma(t_0,\lambda)| \leq \tilde{C} \sup_{\mu \in [-R,R]} \|\Lambda_\mu \|_{op},\quad (t_0,\lambda)\in[0,T]\times[-R,R], 
    \end{equation}
    with $\tilde{C}=\tilde C(\Omega,S,T,A)>0$.
Fix $\lambda\in[-R,R]$,  $t_0 \in (0,T)$, and $\tau \in (0,\tau_0)$. Let $w^j_{\lambda,\tau}$ (resp. $\overline{w}^{j}_{\lambda,\tau}$) be the  solution of \eqref{defn of w_lambda,rho,tau} (resp. \eqref{defn of w*_lambda,rho,tau}) with  $(\gamma,\rho)=(\gamma^j,\rho^j)$. By Proposition \ref{prop:h est.} (resp. Proposition \ref{prop:hbar est.}), $w^j_{\lambda,\tau}$ (resp. $\overline{w}^{j}_{\lambda,\tau}$) takes the form \eqref{dede} (resp. \eqref{defn of w*_lambda,rho,tau}). 
Fix $w_{\lambda,\tau}=w_{\lambda,\tau}^2-w_{\lambda,\tau}^1$ and note that $w_{\lambda,\tau}\in H^1(0,T;L^2(\Omega))\cap L^2(0,T;H^2(\Omega))$ solves the IBVP
$$
        \begin{cases}
\rho^2_\lambda(t) \partial_t w_{\lambda,\tau} - \gamma^2_\lambda(t)\nabla \cdot (A(x) \nabla w_{\lambda,\tau}) = \rho_\lambda(t) \partial_t w_{\lambda,\tau}^1 - \gamma_\lambda(t)\nabla \cdot (A(x)  \nabla w_{\lambda,\tau}^1)  & \text{in } Q ,
\\
w_{\lambda,\tau} = 0 & \text{on } \Sigma , \\
w_{\lambda,\tau}(x,0) = 0 & x \in \Omega.
\end{cases}
$$
    Multiplying the identity by $\overline{w}^2_{\lambda,\tau}$, integrating over $Q$ and integrating by parts give
$$
    \begin{aligned}
         \int_\Sigma - \gamma^2_\lambda(t)(A(x)  \nabla w_{\lambda,\tau} \cdot \nu(x))  \overline{w}^2_{\lambda,\tau} d\sigma(x) dt &=
      \int_Q (\rho_\lambda(t) \partial_t w_{\lambda,\tau}^1\overline{w}^2_{\lambda,\tau}  + \gamma_\lambda(t)A(x) \nabla w_{\lambda,\tau}^1 \cdot \nabla \overline{w}_{\lambda,\tau}^2)dx dt \\ & \quad \quad - \int_\Sigma\gamma_\lambda(t) (A(x)\nabla w_{\lambda,\tau}^1 \cdot \nu(x)) \overline{w}_{\lambda,\tau}^2 d\sigma(x) dt . 
    \end{aligned}
$$
    Next, we group the terms that involve the boundary integrals and note the identity 
    $$A\gamma_\lambda \nabla w_{\lambda,\tau}^1 \cdot \nu - A \gamma^2_\lambda \nabla w_{\lambda,\tau} \cdot \nu = A\gamma^1_\lambda \nabla w_{\lambda,\tau}^1 \cdot \nu - A \gamma^2_\lambda \nabla w_{\lambda,\tau}^2 \cdot \nu.$$
    Moreover, we have $w_{\lambda,\tau}^j = g_\tau = \overline{w}_{\lambda,\tau}^j$  on $\Sigma$ and $\gamma^j_\lambda (A\nabla w^j_{\lambda,\tau} \cdot \nu) = \Lambda_{\lambda,\gamma^j,\rho^j}g_\tau $ (we may use such definition of $\Lambda_{\lambda,\gamma^j,\rho^j}$ instead of \eqref{defn of DN map for less smooth functions}, since $w_{\lambda,\tau}\in L^2(0,T;H^2(\Omega))$).
    Hence, we derive
$$
\label{main identity}
        \int_\Sigma (\Lambda g_\tau)g_\tau d\sigma(x) dt = \int_Q (\rho_\lambda(t) \partial_t w_{\lambda,\tau}^1\overline{w}^2_{\lambda,\tau}  + \gamma_\lambda(t)A(x) \nabla w_{\lambda,\tau}^1 \cdot \nabla \overline{w}_{\lambda,\tau}^2)dx dt  .
$$
By Propositions \ref{prop:h est.} and \ref{prop:hbar est.}, and recalling $P=H+\mathcal{R}$, we may rewrite the identity as  
\begin{align*}
           \int_\Sigma (\Lambda g_\tau)g_\tau d\sigma(x) dt &= \int_Q \rho_\lambda(t) \partial_t w_{\lambda,\tau}^1\overline{w}^2_{\lambda,\tau}\ dx dt \\ &+ \int_Q \gamma_\lambda(t) A(x)\nabla(\varphi_\tau(t) H+B) \cdot \nabla(\varphi_\tau(t)H+\overline{B}) dx dt  ,
\end{align*}
where $B = \varphi_\tau(t)(\mathcal{R}- v_\tau(x)) + h^1_{\lambda,\tau}$ (resp. $\overline{B} = \varphi_\tau(t) (\mathcal{R}- v_\tau(x)) + \overline{h}^2_{\lambda,\tau} $). We rearrange the identity to derive
\begin{align*}
    &\int_Q \gamma_\lambda(t) \varphi_\tau^2(t) A(x)\nabla H \cdot \nabla H dx dt \\
    =  &\int_\Sigma (\Lambda_\lambda g_\tau)g_\tau d\sigma(x) dt -  \int_Q \rho_\lambda(t) \partial_t w_{\lambda,\tau}^1\overline{w}^2_{\lambda,\tau} dx dt 
    - \int_Q \gamma_\lambda(t) \varphi_\tau(t)A(x)\nabla B \cdot \nabla H dxdt\\
    &- \int_Q \gamma_\lambda(t)\varphi_\tau(t)A(x) \nabla  H \cdot \nabla \overline{B} dxdt 
    - \int_Q \gamma_\lambda(t) A(x) \nabla B \cdot \nabla \overline{B}dx  dt .
\end{align*}
First consider the case $n \geq 3$, for which we bound the terms using Propositions \ref{prop:estimates for various functions} - \ref{prop:hbar est.}. There exists $\tilde{C}=\tilde{C}(\Omega,S,T, A)>0$ and $C=C(\Omega,\gamma,S,T,A,R,\rho)>0$ such that 
\begin{align*}
    \left\lvert   \int_\Sigma (\Lambda_\lambda g_\tau)g_\tau d\sigma(x) dt\right\lvert &=\left\lvert\left\langle \Lambda_\lambda g_\tau, g_\tau\right\rangle_{H^{-1/2,-1/2}(S\times(0,T)),H_0^{1/2,1/2}(S\times(0,T))}\right\lvert\\
		&\leq \|\Lambda_\lambda g_\tau\|_{H^{-1/2,-1/2}(S\times(0,T))} \|g_\tau\|_{H^{1/2,1/2}_0(S\times (0,T))}\\ & \leq \|\Lambda_\lambda\|_{op} \|g_\tau\|^2_{H^{1/2,1/2}(S\times (0,T))} \leq \tilde{C}\|\Lambda_\lambda\|_{op} \tau^{2-n} , \\
     \left\lvert     \int_Q \rho_\lambda(t) \partial_t w_{\lambda,\tau}^1\overline{w}^2_{\lambda,\tau}  dx dt\right\lvert  &\leq \|\rho_\lambda\|_{L^\infty((0,T))}\|\partial_t w_{\lambda,\tau}^1\|_{L^2(Q)} \|\overline{w}_{\lambda,\tau}^2\|_{L^2(Q)} \leq C a^2_\tau (1+\tau^{4-n}) . 
\end{align*}
Additionally, we see that the terms involving $\nabla B$ and $\nabla \overline{B}$ can be estimated using \eqref{eqn: est for h,dtw and B} and \eqref{eqn: est for h adj, w adj and B adj} respectively. Utilizing Fubini's theorem, we derive
\begin{align*}
   \left\lvert \int_Q  \gamma_\lambda(t)\varphi_\tau(t) A(x)\nabla B \cdot \nabla H  dx dt\right\lvert &\leq C\|\gamma_\lambda\|_{L^\infty((0,T))} \|A\|_{L^\infty(\Omega)} \|\nabla B\|_{L^2(Q)} \|\varphi_\tau\nabla H\|_{L^2(Q)} \\
   &\leq C(a_\tau \tau^{2-\frac{n}{2}}+\tau^{\frac{3}{2}-\frac{n}{2}} )\|\varphi_\tau\|_{L^2((0,T))} \|\nabla H\|_{L^2(\Omega)} 
    \leq C\tau^{\frac{5}{2}-n} ,\\
   \left\lvert \int_Q  \gamma_\lambda(t) \varphi_\tau(t) A(x)\nabla H \cdot \nabla \overline{B} dx dt \right\lvert &\leq C\|\gamma_\lambda\|_{L^\infty((0,T))} \|A\|_{L^\infty(\Omega)} \|\varphi_\tau\nabla H\|_{L^2(Q)} \|\nabla \overline{B}\|_{L^2(Q)} \\
   &\leq C(a_\tau \tau^{2-\frac{n}{2}}+\tau^{\frac{3}{2}-\frac{n}{2}} )\|\varphi_\tau\|_{L^2((0,T))} \|\nabla H\|_{L^2(\Omega)} 
   \leq C\tau^{\frac{5}{2}-n} ,\\
   \left\lvert \int_Q  \gamma_\lambda(t) A(x)\nabla B \cdot \nabla \overline{B}  dx dt\right\lvert &\leq C\|\gamma_\lambda\|_{L^\infty((0,T))} \|A\|_{L^\infty(\Omega)} \|\nabla B\|_{L^2(Q)} \|\nabla \overline{B}\|_{L^2(Q)} \\ &\leq C(a_\tau^2 \tau^{4-n} + \tau^{3-n}) .
   \end{align*}
Combining the preceding estimates and again applying Fubini's theorem yield
\begin{equation}
 \label{aux_4,1}
\begin{aligned}
  &\quad \left \lvert \int_0^T \gamma_\lambda(t) \varphi_\tau^2(t)dt \right \lvert \left\lvert \int_{\Omega} A(x) \nabla H \cdot \nabla H dx\right \lvert\\
  &=\left \lvert \int_Q \gamma_\lambda(t) \varphi_\tau^2(t) A(x) \nabla H \cdot \nabla H dx dt\right \lvert \\
&\leq \tilde{C}\|\Lambda_\lambda\|_{op} \tau^{2-n} + C( a_\tau \tau^{4-n} + \tau^{\frac{5}{2}-n} + a_\tau^2 \tau^{4-n} + \tau^{3-n}) \\ &
\leq \tilde{C}\|\Lambda_\lambda\|_{op}\tau^{2-n} +C \tau^{\frac{5}{2}-n} . 
\end{aligned}
\end{equation}
Last, by \eqref{reference estimate for H} and utilizing polar coordinates, we get
\begin{equation}
\label{eqn:lower bound for gradH}
\begin{aligned}
    &\quad\left\lvert \int_\Omega A(x) \nabla H \cdot \nabla Hdx \right\lvert\geq c\int_\Omega |\nabla H|^2dx \geq \tilde{C} \int_\Omega |x-y_\tau|^{2-2n} dx  \\& 
    \geq \tilde{C} \int_{[B(y_\tau,2\tau) \setminus B(y_\tau,\tau) ]\cap \Omega}  |x-y_\tau|^{2-2n} \geq \tilde{C} \int_\tau^{2\tau} r^{1-n} \ dr  \geq \tilde{C} \tau^{2-n}.
    \end{aligned}
\end{equation}
Hence, combining the estimates \eqref{aux_4,1} and \eqref{eqn:lower bound for gradH} gives 
$$
     \left \lvert \int_0^T \gamma_\lambda(t) \varphi_\tau^2(t) dt\right \lvert \leq \tilde{C} \|\Lambda_\lambda\|_{op} + C\tau^{\frac{1}{2}} .
$$
Taking $\tau\to 0$ and applying Lemma \ref{lemma: pseudo mollifier} leads to
\begin{equation}\label{estint}
    |\gamma(t_0,\lambda)| \leq \tilde{C} \|\Lambda_\lambda\|_{op} \leq \tilde{C} \sup_{\mu \in [-R,R]}\|\Lambda_{\mu}\|_{op}.
\end{equation} 
Since $\tilde{C}$ is independent of  $\lambda\in[-R,R]$ and $t_0\in(0,T)$, by the continuity of $\gamma$, the estimate \eqref{estint}  holds for any $(t_0,\lambda) \in [0,T]\times[-R,R]$. This proves \eqref{estlip} and it completes the proof for the case $n \geq 3$. The case $n=2$ can be treated similarly by applying Proposition \ref{prop:case of n=2}, and we only sketch the main steps. By repeating the previous argumentation, we obtain
$$
\begin{aligned}
    \left \lvert \int_0^T \gamma_\lambda(t) \varphi_\tau^2(t) dt \right \lvert \left\lvert \int_{\Omega} A(x) \nabla H \cdot \nabla H dx\right \lvert &\leq \tilde{C} \|\Lambda_\lambda\|_{op}|\ln \tau| + C(a_\tau^2 + a_\tau |\ln \tau|^{\frac{1}{2}} + a_\tau^2)\\ &\leq \tilde{C} \|\Lambda \|_{op} |\ln \tau| + C a_\tau |\ln \tau|^{\frac{1}{2}} .
\end{aligned}
$$
Then similarly, we may derive
$\left\lvert \int_\Omega A(x) \nabla H \cdot \nabla Hdx \right\lvert  \geq \tilde{C} |\ln \tau| .$
Consequently, we have 
$$
      \left \lvert \int_0^T \gamma_\lambda (t)\varphi_\tau^2(t) dt\right \lvert \leq \tilde{C} \|\Lambda_\lambda\|_{op} + Ca_\tau|\ln \tau|^{-\frac{1}{2}}.
$$
We obtain the estimate for $n=2$ by taking $\tau \to 0$ and repeating the previous argument.
\end{proof}

\section{Proof of the second stability estimate}
\label{sec: proof of rho}

This section is devoted to the  proof of the H\"older stability estimate \eqref{estt} of $\rho$ stated in Theorem \ref{thm:est of rho}. For this purpose, we will use a new class of  singular solutions of the linearized equation \eqref{linearised eqn. assoc. w/ DN map}. These solutions can be compared with those introduced in Section \ref{section: construction of special solutions} under some modifications related to the new class of parameter $\rho$ that we want to determine and the corresponding H\"older stability estimate \eqref{estt}. In all this section, we assume that $A = \text{Id}$ and  $n \geq 3$. We also additionally assume that 
\begin{equation}\label{cond}
    \sup_{t\in [0,T]}\partial_t \rho(t,s) \leq \kappa(s), \quad s\in\R,
\end{equation}
for some strictly positive continuous function $\kappa:\R\to(0,\infty)$.
We now adjust the singular solutions constructed in Section \ref{section: construction of special solutions}. In this case, for all $(x,y)\in \Omega_*\times\Omega_*\setminus D$, we can fix  $P(x,y)=H(x,y) = |x-y|^{2-n}$ and  observe that, for all $y\in\Omega_*$,  $\Delta H(\cdot,y)=0$ on $\Omega_*\setminus\{y\}$. Also take $y_\tau \in \Omega'$ for $\tau \in (0,\delta)$ as before, and by fixing $t_0\in(0,T)$, consider the cutoff function $\varphi_\tau$ defined by \eqref{defn of varphi_tau} which is modified with $a_\tau = \tau^{-r}$, where $r>0$ is to be determined. By fixing $j=1,\ldots,n$, we define $v_{j,\tau} \in H^2(\Omega')$ the solution of 
\begin{equation}
    \begin{cases}
        -\Delta v_{j,\tau} = 0 & \text{in } \Omega', \\
        v_{j,\tau} = \partial_{x_j}H(x,y_\tau) & \text{on } \partial\Omega'.
    \end{cases}
\end{equation}
Fix $\tau_0\in(0,\delta)$ such that $\tau_0^r < \min(t_0,T-t_0)$. Then, for $\tau < \tau_0$, we construct the modified boundary functions $g_{j,\tau} \in H_0^{1/2,1/2}(S\times(0,T))$ by 
\begin{equation}
    g_{j,\tau}(x,t) = \chi(t) \Phi_\tau(t) (\partial_{x_j}H(x,y_\tau)-v_{j,\tau}(x)) ,
\end{equation}
where $\Phi_\tau(t) = \int_0^t \varphi_\tau (s) ds  $, and $\chi \in \mathcal{C}_0^\infty([0,T])$ is an arbitrary function satisfying $\chi=1$ 
on  $[t_0-\tau_0^r,t_0+\tau_0^r]$. Since $\tau<\tau_0$, we have supp$(\varphi_\tau) \subset [t_0-\tau^r,t_0+\tau^r]\subset(t_0-\tau_0^r,t_0+\tau_0^r)$, which implies $\chi = 1$ on a neighborhood of $\mathrm{supp}(\varphi_\tau$). Note also that
\begin{equation}
\label{expression for chiPhi and (chiPhi)_t}
\chi(t) \Phi_\tau(t) = \Phi_\tau(t) + \xi(t) , \, \partial_t(\chi(t) \Phi_\tau(t)) = \varphi_\tau(t) + \zeta(t) ,    
\end{equation}
where $\xi = \Phi_\tau(T)\chi I_{[t_0+\tau^r,T]}$ and $\zeta = \Phi_\tau(T)(\partial_t\chi)I_{[t_0+\tau^r,T]}$, with $I_B$ denoting the indicator function of the set $B$. Due to \eqref{estimates for varphi}, there holds $\|\Phi_\tau\|_{L^2((0,T))} \leq C$ for $C>0$ independent of $\tau$. Hence, $\|\xi\|_{L^2((0,T))}+\|\zeta\|_{L^2((0,T))} \leq C$.
In addition, since $\chi = 1$ in a neighborhood of $\text{supp}(\varphi_\tau$), we have  $\xi\varphi_\tau = \zeta\varphi_\tau \equiv 0$. Finally, for $\lambda\in\R$ and $\tau \in (0,\tau_0)$, we denote by $w_{j,\lambda,\tau} \in H^1(0,T;L^2(\Omega))\cap L^2(0,T;H^2(\Omega))$ the solution to problem \eqref{linearised eqn. assoc. w/ DN map} with $g = g_{j,\tau}$, i.e. 
\begin{equation}
    \begin{cases}
        \rho_\lambda(t)\partial_t w_{j,\lambda,\tau} - \gamma_{\lambda}(t) \Delta w_{j,\lambda,\tau} = 0 & \text{in }Q, \\
        w_{j,\lambda,\tau} = g_{j,\tau}(x,t) & \text{on }\Sigma, \\
        w_{j,\lambda,\tau}(x,0) = 0 & x \in \Omega. 
    \end{cases}
\end{equation}
\begin{prop}
\label{prop: w for rho case}
Let  $R>0$ and consider $\lambda\in[-R,R]$. For any $t_0 \in (0,T)$ and any  $\tau \in (0,\tau_0)$,  $w_{j,\lambda,\tau}$ can be decomposed into
    \begin{equation}
        w_{j,\lambda,\tau}(x,t) =  \chi(t) \Phi_\tau(t) (\partial_{x_j}H(x,y_\tau) -v_{j,\tau}(x)) + h_{j,\lambda,\tau}(x,t),\quad (x,t)\in Q  ,
    \end{equation}
    with $h_{j,\lambda,\tau} \in H^1(0,T;L^2(\Omega))\cap L^2(0,T;H^2(\Omega)\cap H^1_0(\Omega))$.
    Moreover, we have the estimates
    \begin{equation}
    \begin{aligned}
        &\|g_{j,\tau}\|_{H_0^{1/2,1/2}(S \times (0,T))} \leq C \tau^{-\frac{n}{2}} , \,           \|w_{j,\lambda,\tau}\|_{H^1(Q)} \leq \tau^{-\frac{n}{2}} ,\\
             &\|h_{j,\lambda,\tau}\|_{H^1(0,T;H^{-1}(\Omega))}+ \|h_{j,\lambda,\tau}\|_{L^2(0,T;H^1(\Omega))}\leq C(1+\tau^{2-\frac{n}{2}} ),
            \end{aligned}
        \end{equation}
with the constant $C=C(\Omega, S, T, R, \kappa,m)>0$.
\end{prop}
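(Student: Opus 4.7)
The plan is to mirror the argument of Proposition \ref{prop:h est.}, while exploiting two new features specific to this construction: the cutoff $\chi(t)\Phi_\tau(t)$ has time derivative $\varphi_\tau+\zeta$ bounded in $L^2((0,T))$ uniformly in $\tau$ (rather than of order $a_\tau$ as was the case of $\partial_t\varphi_\tau$ in Proposition \ref{prop:h est.}), and the singular factor $\partial_{x_j}H(\cdot,y_\tau)$ is a spatial derivative of $H(\cdot,y_\tau)$, so one gains one power of $\tau$ by measuring it in $H^{-1}(\Omega)$ via duality.

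First I would define
$$h_{j,\lambda,\tau}(x,t):= w_{j,\lambda,\tau}(x,t)-\chi(t)\Phi_\tau(t)\bigl(\partial_{x_j}H(x,y_\tau)-v_{j,\tau}(x)\bigr)$$
and verify that it solves
$$
\begin{cases}
\rho_\lambda(t)\partial_t h_{j,\lambda,\tau}-\gamma_\lambda(t)\Delta h_{j,\lambda,\tau}=-\rho_\lambda(t)(\varphi_\tau(t)+\zeta(t))(\partial_{x_j}H(\cdot,y_\tau)-v_{j,\tau}) & \text{in } Q,\\
h_{j,\lambda,\tau}=0 & \text{on }\Sigma,\\
h_{j,\lambda,\tau}(\cdot,0)=0 & \text{in }\Omega.
\end{cases}
$$
The zero boundary trace follows from the very definition of $g_{j,\tau}$; the zero initial condition from $\Phi_\tau(0)=0$; the vanishing of the Laplacian of the subtracted term from the fact that $\partial_{x_j}H(\cdot,y_\tau)$ is harmonic on $\Omega\subset\Omega_*\setminus\{y_\tau\}$ and $v_{j,\tau}$ is harmonic on $\Omega'\supset\Omega$; and the explicit source from the identity $\partial_t(\chi\Phi_\tau)=\varphi_\tau+\zeta$ recorded in \eqref{expression for chiPhi and (chiPhi)_t}.

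To obtain the weak-norm bound on $h_{j,\lambda,\tau}$, I would apply \cite[Chapter 3, Theorem 4.1]{LM1} with the source viewed in $L^2(0,T;H^{-1}(\Omega))$. The key estimate is that $\partial_{x_j}H(\cdot,y_\tau)=\partial_{x_j}\bigl(H(\cdot,y_\tau)\bigr)$ so, using boundedness of $\partial_{x_j}:L^2(\Omega)\to H^{-1}(\Omega)$ together with the bound $\|H(\cdot,y_\tau)\|_{L^2(\Omega)}\leq C\tau^{2-n/2}$ from Proposition \ref{prop:estimates for various functions}, one gets $\|\partial_{x_j}H(\cdot,y_\tau)\|_{H^{-1}(\Omega)}\leq C\tau^{2-n/2}$; while a standard elliptic estimate applied to the defining problem of $v_{j,\tau}$ gives $\|v_{j,\tau}\|_{H^1(\Omega')}\leq C$ uniformly in $\tau$, since $\text{dist}(y_\tau,\partial\Omega')\geq\delta$ makes $\partial_{x_j}H(\cdot,y_\tau)$ smooth on $\partial\Omega'$. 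Combined with $\|\varphi_\tau+\zeta\|_{L^2((0,T))}\leq C$, this yields the claimed $C(1+\tau^{2-n/2})$ bound. For the lifting estimates, I would directly control the $H^1(Q)$ norm of $\chi\Phi_\tau(\partial_{x_j}H(\cdot,y_\tau)-v_{j,\tau})$; the dominant contribution is $\chi\Phi_\tau\nabla_x\partial_{x_j}H(\cdot,y_\tau)$, whose $L^2(Q)$ norm is of order $\tau^{-n/2}$ by the polar-coordinate computation in Proposition \ref{prop:estimates for various functions} applied to the pointwise bound $|\nabla_x\partial_{x_j}H(x,y_\tau)|\leq C|x-y_\tau|^{-n}$. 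Applying Lemma \ref{lem:lifting} at both $\tau=0$ and $\tau=T$ (the lifting vanishes at both endpoints since $\chi$ is compactly supported in $(0,T)$) yields the bound on $\|g_{j,\tau}\|_{H_0^{1/2,1/2}(S\times(0,T))}$, and the $H^1(Q)$ bound on $w_{j,\lambda,\tau}$ follows by combining this lifting bound with the $H^1(Q)$ bound on $h_{j,\lambda,\tau}$ obtained from \cite[Chapter 4, Theorem 6.1]{LM2} applied to its $L^2(Q)$ source (of norm at most $C\tau^{1-n/2}$).

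The main obstacle is to keep the $\tau$-dependence sharp enough in the bound on $h_{j,\lambda,\tau}$. Two simultaneous gains are needed: the $H^{-1}$ duality to pass from $\tau^{1-n/2}$ to $\tau^{2-n/2}$ in the spatial factor, and the use of $\Phi_\tau=\int_0^t\varphi_\tau$ in place of $\varphi_\tau$ in the time profile so that the time derivative avoids the factor $a_\tau=\tau^{-r}$ that appeared in Proposition \ref{prop:h est.}. Losing either gain would degrade the H\"older rate $\tfrac{1}{9}$ in \eqref{estt} that this construction is designed to yield.
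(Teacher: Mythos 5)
Your proposal is correct and follows essentially the same route as the paper's proof: the same decomposition with the PDE for $h_{j,\lambda,\tau}$ driven by the source $-\rho_\lambda(\varphi_\tau+\zeta)(\partial_{x_j}H(\cdot,y_\tau)-v_{j,\tau})$, the same application of \cite[Chapter 3, Theorem 4.1]{LM1} combined with the duality gain $\|\partial_{x_j}H(\cdot,y_\tau)\|_{H^{-1}(\Omega)}\leq C\|H(\cdot,y_\tau)\|_{L^2(\Omega)}\leq C\tau^{2-\frac{n}{2}}$, and the same lifting estimate for $g_{j,\tau}$ dominated by the second-order derivatives of $H$. You correctly identify the two mechanisms (the antiderivative time profile and the $H^{-1}$ measurement of the spatial singularity) that the paper's construction relies on.
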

\begin{proof} 
%In the proof $C>0$ is a constant depending on $\Omega$, $S$, $T$, $R$, $\kappa$ and $m$.
We first bound $g_{j,\tau}$.
By Lemma \ref{lem:lifting} and Proposition \ref{prop:estimates for various functions}, we have 
\begin{align*}
    &\quad \|g_{j,\tau}\|_{H_0^{1/2,1/2}(S \times (0,T))}  \leq C\left\lvert\left\lvert 
 \chi \Phi_\tau (\partial_{x_j}H(\cdot,y_\tau)-v_{j,\tau})\right\lvert \right\lvert_{H^1(Q)} \\
 & \leq C\left(\|\partial_t(\chi\Phi_\tau)(\partial_{x_j}H(\cdot,y_\tau)-v_{j,\tau})\|_{L^2(Q)} + \left\lvert\left\lvert \chi\Phi_\tau\partial_{x_k}(\partial_{x_j} H(\cdot,y_\tau)-v_{j,\tau})\right\lvert \right\lvert_{L^2(Q)}\right) \\
 & \leq C\left(\|(\varphi_\tau + \zeta)(\partial_{x_j}H(\cdot,y_\tau)-v_{j,\tau})\|_{L^2(Q)} + \|(\Phi_\tau + \xi)\partial_{x_k}(\partial_{x_j} H(\cdot,y_\tau)-v_{j,\tau})\|_{L^2(Q)} \right)\\
 & \leq C(\tau^{1-\frac{n}{2}} + 1+ \tau^{-\frac{n}{2}} +1 ) \leq C\tau^{-\frac{n}{2}},
    \end{align*}
where the bound $\|\partial_{x_k}\partial_{x_j}H(\cdot,y_\tau)\|_{L^2(\Omega)} \leq C\tau^{-\frac{n}{2}}$ follows similarly as Proposition \ref{prop:estimates for various functions}. Next, consider the estimate for $h_{j,\lambda,\tau}$. Noting that  $\Delta \partial_{x_j} H(x,y_\tau)=\partial_{x_j}\Delta  H(x,y_\tau) = 0 $ for $x \in \Omega$, we see $h_{j,\lambda,\tau}$ solves
$$
    \begin{cases}
        \rho_\lambda(t) \partial_t h_{j,\lambda,\tau} -\gamma_\lambda(t) \Delta h_{j,\lambda,\tau} = - \rho_\lambda(t)\partial_t(\chi(t)\Phi_\tau (t))(\partial_{x_j} H(x,y_\tau)-v_{j,\tau}(x)) & \text{in }Q, \\
        h_{j,\lambda,\tau} = 0 & \text{on } \Sigma, \\
        h_{j,\lambda,\tau}(x,0) = 0  & x\in \Omega.
    \end{cases}
$$
 Like the proof of Lemma \ref{lem:existence of solution with less regular boundary data}, by \cite[Chapter 3, Theorem 4.1]{LM1}, we obtain
\begin{align*}
&\quad\|h_{j,\lambda,\tau}\|_{H^1(0,T;H^{-1}(\Omega))}+\|h_{j,\lambda,\tau}\|_{L^2(0,T;H^1(\Omega))} \leq \| \rho_\lambda(\varphi_\tau + \zeta)(\partial_{x_j} H(\cdot,y_\tau)-v_{j,\tau})\|_{L^2(0,T;H^{-1}(\Omega))}\\
&\leq C(1+\| \partial_{x_j} H(\cdot,y_\tau)\|_{H^{-1}(\Omega)})	\leq C(1+\|  H(\cdot,y_\tau)\|_{L^2(\Omega)})\leq C(1+\tau^{2-\frac{n}{2}} ) .\end{align*}
Lastly, combining the above estimates gives the bound on $w_{j,\lambda,\tau}$. 
\end{proof}

Also we define the boundary data
\begin{equation}
    \overline{g}_{j,\tau}(x,t) = \varphi_\tau(t) (\partial_{x_j}H(x,y_\tau)-v_{j,\tau}(x)) ,
\end{equation}
and let $\overline{w}_{j,\lambda,\tau}$ be the solution to the adjoint problem 
\begin{equation}
            \begin{cases}
-\partial_t(\rho_\lambda(t)  \overline{w}_{j,\lambda,\tau} )- \gamma_\lambda(t)\Delta \overline{w}_{j,\lambda,\tau} = 0   & \text{in } Q ,
\\
\overline{w}_{j,\lambda,\tau} = \overline{g}_{j,\tau}(x,t) & \text{on } \Sigma , \\
\overline{w}_{j,\lambda,\tau}(x,T) = 0 & x \in \Omega.
\end{cases}
\end{equation}

\begin{prop}
\label{prop: w adj for rho case}
    Let  $R>0$ and consider $\lambda\in[-R,R]$. For any $t_0 \in (0,T)$ and any  $\tau \in (0,\tau_0)$, we may rewrite $\overline{w}_{j,\lambda,\tau}$ as
    \begin{equation}
        \overline{w}_{j,\lambda,\tau}(x,t) =  \varphi_\tau(t) (\partial_{x_j}H(x,y_\tau)-v_{j,\tau}(x)) + \overline{h}_{j,\lambda,\tau}(x,t),\quad (x,t)\in Q .
    \end{equation}
   with   $\overline{h}_{j,\lambda,\tau} \in H^1(0,T;L^2(\Omega))\cap L^2(0,T;H^2(\Omega)\cap H^1_0(\Omega))$. Moreover, we have
    \begin{equation}
    \begin{aligned}
        &\|\overline{g}_{j,\tau}\|_{H_0^{1/2,1/2}(S \times (0,T))} \leq C (\tau^{1-r-\frac{n}{2}}+\tau^{-\frac{n}{2}}) ,\,       \|\overline{h}_{j,\lambda,\tau}\|_{L^2(0,T;H^1(\Omega))}\leq C(\tau^{-r}+\tau^{2-r-\frac{n}{2}} ) ,\\
&\|\overline{w}_{j,\lambda,\tau}\|_{L^2(0,T;H^1(\Omega))} \leq C( \tau^{-\frac{n}{2}}+\tau^{-r}+\tau^{2-r-\frac{n}{2}}) ,
        \end{aligned}
    \end{equation}
    with the constant $C=C(\Omega, S, T, R, \kappa,m)>0$. 
\end{prop}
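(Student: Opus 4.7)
The plan is to mirror the proof of Proposition \ref{prop: w for rho case}, adapted to the backward-in-time adjoint equation and the simpler boundary function $\overline{g}_{j,\tau} = \varphi_\tau(\partial_{x_j}H(\cdot,y_\tau) - v_{j,\tau})$ (without the $\chi\Phi_\tau$ structure). Set $E(x) := \partial_{x_j}H(x,y_\tau) - v_{j,\tau}(x)$ and observe that $\Delta E = 0$ in $\Omega$ by the construction of $v_{j,\tau}$ and the fact that $y_\tau \in \Omega_* \setminus \overline{\Omega}$. Inserting the ansatz $\overline{w}_{j,\lambda,\tau} = \varphi_\tau(t) E(x) + \overline{h}_{j,\lambda,\tau}(x,t)$ into the adjoint IBVP and using $\Delta E = 0$, the remainder $\overline{h}_{j,\lambda,\tau}$ satisfies an adjoint problem with homogeneous Dirichlet data on $\Sigma$, vanishing terminal data at $t=T$ (since $\varphi_\tau(T)=0$ as $\mathrm{supp}\,\varphi_\tau \subset (t_0-\tau^r,t_0+\tau^r)$ with $\tau^r < T - t_0$), and source $\partial_t(\rho_\lambda \varphi_\tau)E = [(\partial_t\rho_\lambda)\varphi_\tau + \rho_\lambda \partial_t\varphi_\tau]E$.

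For $\overline{g}_{j,\tau}$, I will invoke Lemma \ref{lem:lifting} to bound its $H_0^{1/2,1/2}(S\times(0,T))$ norm by the $H^1(Q)$ norm of $\varphi_\tau E$, then apply the Leibniz rule and the estimates of Proposition \ref{prop:estimates for various functions}, refined to $\|\partial_{x_j}H(\cdot,y_\tau)\|_{L^2(\Omega)} \leq C\tau^{1-n/2}$ and $\|\nabla\partial_{x_j}H(\cdot,y_\tau)\|_{L^2(\Omega)} \leq C\tau^{-n/2}$ (the latter obtained in polar coordinates exactly as in \eqref{reference estimate for H}, increased by one derivative). Combining with \eqref{estimates for varphi} in the form $\|\partial_t\varphi_\tau\|_{L^2((0,T))} \leq C\tau^{-r}$ and with the uniform bound $\|v_{j,\tau}\|_{H^1(\Omega')} \leq C$ from standard elliptic regularity on $\Omega'$, the leading contributions $\tau^{1-r-n/2}$ (from $\partial_t\varphi_\tau \cdot \partial_{x_j}H$) and $\tau^{-n/2}$ (from $\varphi_\tau \cdot \nabla\partial_{x_j}H$) produce the claimed bound.

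Next, I will apply the adjoint counterpart of \cite[Chapter 3, Theorem 4.1]{LM1}, reached after the time reversal $t \mapsto T-t$, to estimate $\|\overline{h}_{j,\lambda,\tau}\|_{L^2(0,T;H^1(\Omega))}$ by the $L^2(0,T;H^{-1}(\Omega))$ norm of the source $\partial_t(\rho_\lambda \varphi_\tau) E$. The two summands must be treated separately: the hypothesis \eqref{cond} on $\partial_t\rho$ yields $|(\partial_t\rho_\lambda)\varphi_\tau| \leq \kappa(\lambda)|\varphi_\tau|$ uniformly in $t$, contributing $\|\varphi_\tau\|_{L^2((0,T))}\|E\|_{H^{-1}(\Omega)} \leq C(1+\tau^{2-n/2})$, where $\|E\|_{H^{-1}(\Omega)} \leq C(1+\|H(\cdot,y_\tau)\|_{L^2(\Omega)}) \leq C(1+\tau^{2-n/2})$ thanks to the boundedness of $\partial_{x_j}: L^2(\Omega) \to H^{-1}(\Omega)$; while the second summand $\rho_\lambda\partial_t\varphi_\tau E$ contributes $C\tau^{-r}(1+\tau^{2-n/2})$. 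Their sum is dominated by $C(\tau^{-r}+\tau^{2-r-n/2})$ for small $\tau$, giving the stated bound on $\overline{h}_{j,\lambda,\tau}$. The combined estimate for $\overline{w}_{j,\lambda,\tau}$ then follows by the triangle inequality together with $\|\varphi_\tau E\|_{L^2(0,T;H^1(\Omega))} \leq \|\varphi_\tau\|_{L^2((0,T))}\|E\|_{H^1(\Omega)} \leq C\tau^{-n/2}$.

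The main subtlety, which I expect to be the essential obstacle, is the term $(\partial_t\rho_\lambda)\varphi_\tau E$ in the source: unlike in Proposition \ref{prop:hbar est.}, where $\rho_\lambda$ is treated as a fixed coefficient and the source involves only $\partial_t\varphi_\tau$, here time differentiation of the product $\rho_\lambda \overline{w}_{j,\lambda,\tau}$ in the adjoint equation generates a genuine $\partial_t\rho_\lambda$ contribution that would otherwise be uncontrolled uniformly in $\lambda \in [-R,R]$. This is precisely the mechanism through which the a priori assumption \eqref{eqn:additional assumption for rho}, and the resulting dependency of the constant $C$ on $\kappa$, enters the H\"older stability estimate \eqref{estt} of Theorem \ref{thm:est of rho}.
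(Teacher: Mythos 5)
Your proposal is correct and follows essentially the same route as the paper: the same decomposition $\overline{w}_{j,\lambda,\tau}=\varphi_\tau(\partial_{x_j}H-v_{j,\tau})+\overline{h}_{j,\lambda,\tau}$ with $\Delta(\partial_{x_j}H-v_{j,\tau})=0$, the lifting Lemma \ref{lem:lifting} plus the scaling bounds $\|\partial_{x_j}H(\cdot,y_\tau)\|_{L^2(\Omega)}\leq C\tau^{1-\frac{n}{2}}$, $\|\partial_{x_k}\partial_{x_j}H(\cdot,y_\tau)\|_{L^2(\Omega)}\leq C\tau^{-\frac{n}{2}}$ for $\overline{g}_{j,\tau}$, and the energy estimate of \cite[Chapter 3, Theorem 4.1]{LM1} applied to the homogeneous-Dirichlet adjoint problem for $\overline{h}_{j,\lambda,\tau}$ with source $\partial_t(\rho_\lambda\varphi_\tau)(\partial_{x_j}H-v_{j,\tau})$ measured in $L^2(0,T;H^{-1}(\Omega))$ via the boundedness of $\partial_{x_j}:L^2(\Omega)\to H^{-1}(\Omega)$. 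Your explicit splitting of the source into the $(\partial_t\rho_\lambda)\varphi_\tau$ and $\rho_\lambda\partial_t\varphi_\tau$ contributions is just a more detailed rendering of the paper's factor $C(1+\tau^{-r})$, so no substantive difference.
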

\begin{proof}
%In the proof $C>0$ is a constant depending on $\Omega$, $S$, $T$, $R$, $\kappa$ and $m$. 
First we bound $\overline{g}_{j,\tau}$. Lemma \ref{lem:lifting}, Proposition \ref{prop:estimates for various functions}, and the estimate \eqref{estimates for varphi} imply 
$$
    \begin{aligned}
        &\quad \|\overline{g}_{j,\tau}\|_{H_0^{1/2,1/2}(S \times (0,T))} 
         \leq C\left\lvert\left\lvert 
 \varphi_\tau(\partial_{x_j}H(\cdot,y_\tau) - v_{j,\tau}\right\lvert \right\lvert_{H^1(Q)} \\
 & \leq C\left(\|\partial_t\varphi_\tau(\partial_{x_j}H(\cdot,y_\tau)-v_{j,\tau})\|_{L^2(Q)} + \left\lvert\left\lvert \varphi_\tau\partial_{x_k}(\partial_{x_j} H(\cdot,y_\tau)-v_{j,\tau})\right\lvert \right\lvert_{L^2(Q)} \right)\\
 & \leq C(\tau^{1-r-\frac{n}{2}} +\tau^{-r}+\tau^{-\frac{n}{2}}+1) \leq C(\tau^{1-r-\frac{n}{2}}+\tau^{-\frac{n}{2}}),
    \end{aligned}
$$
using again the estimate $\|\partial_{x_k}\partial_{x_j}H(\cdot,y_\tau)\|_{L^2(\Omega)}\leq C\tau^{-\frac{n}{2}}$. Note that $\overline{h}_{j,\lambda,\tau}$ solves
$$
    \begin{cases}
        -\partial_t(\rho_\lambda (t)\overline{h}_{j,\lambda,\tau}) - \gamma_{\lambda}(t) \Delta \overline{h}_{j,\lambda,\tau} = \partial_t(\rho_\lambda(t) \varphi_{\tau}(t))(\partial_{x_j}H(x,y_\tau) - v_{j,\tau}(x)) & \text{in }Q, \\
        \overline{h}_{j,\lambda,\tau} = 0 & \text{on } \Sigma, \\
        \overline{h}_{j,\lambda,\tau}(x,T) = 0  & x\in \Omega. 
    \end{cases}
$$
Then, by \cite[Chapter 3, Theorem 4.1]{LM1}, we have
\begin{align*}
\|\overline{h}_{j,\lambda,\tau}\|_{L^2(0,T;H^1(\Omega))} &\leq \| \partial_t(\rho_\lambda \varphi_\tau)(\partial_{x_j}H(\cdot,y_\tau)-v_{j,\tau})\|_{L^2(0,T;H^{-1}(\Omega))}\\
		&\leq C(1+\tau^{-r})(1+\| \partial_{x_j} H(\cdot,y_\tau)\|_{H^{-1}(\Omega)})\\
		&\leq C\tau^{-r}(1+\|  H(\cdot,y_\tau)\|_{L^2(\Omega)}) \leq C(\tau^{-r}+\tau^{2-r-\frac{n}{2}} ) .
\end{align*}
Now the bound on $\overline{w}_{j,\lambda,\tau}$ follows directly from the bounds for $\overline{g}_{j,\tau}$ and $\overline{h}_{j,\lambda,\tau}$. 
\end{proof}

With Proposition \ref{prop: w for rho case} and \ref{prop: w adj for rho case}, we can now prove Theorem \ref{thm:est of rho}.

\begin{proof}[Proof of Theorem \ref{thm:est of rho}] Fix $R>0$. %The constant $C>0$ depends on $\Omega$, $S$, $\kappa$, $m$, $R$ and $T$. 
Since $\rho^j \in    \mathcal{C}^1([0,T]; \mathcal{C}^3(\R))$ for $j=1,2$, we can find $(t_0,\lambda_R)\in[0,T]\times[-R,R]$ such that
\begin{equation}\label{cond3}\sup_{(t,\lambda)\in[0,T]\times[-R,R]}|\rho^1(t,\lambda)-\rho^2(t,\lambda)|=|\rho^1(t_0,\lambda_R)-\rho^2(t_0,\lambda_R)|.\end{equation}
Then, condition \eqref{cond2} implies that we may assume  $t_0\in(0,T)$. Similar to the proof of Theorem \ref{thm:est for gamma}, for $j=1,\ldots,n$,  denote by $w^k_{j,\lambda_R,\tau}$ the singular solutions in Proposition \ref{prop: w for rho case} with $(t_0,\lambda_R) \in (0,T) \times[-R,R]$ satisfying condition \eqref{cond3}, and with $\gamma=\gamma^k$, $\rho=\rho^k$, $k=1,2$. Moreover, we write $\lambda=\lambda_R$, and for $j=1,\ldots,n$, we denote $w_{j,\lambda,\tau} = w^2_{j,\lambda,\tau} - w^1_{j,\lambda,\tau}$, $\gamma_{\lambda} = \gamma^1_{\lambda} -\gamma^2_{\lambda}$, $\rho_{\lambda} = \rho^1_{\lambda} - \rho^2_{\lambda}$.  We also denote  $\Lambda_\mu = \Lambda_{\mu,\gamma^1,\rho^1} - \Lambda_{\mu,\gamma^2,\rho^2}$, $\mu\in\R$. Following the argumentation in Theorem \ref{thm:est for gamma},  we only need to show 
    \begin{equation}
        |\rho(\lambda,t_0)| \leq C\eta^{\frac{1}{9}}, \quad \mbox{with }  \eta := \sup_{\mu \in [-R,R]} \|\Lambda_\mu\|_{op} ,
    \end{equation}
    with $\|\cdot\|_{op}$ denoting the operator norm on maps from $H^{1/2,1/2}_0(S\times (0,T))$ to $H^{-1/2,-1/2}(S\times (0,T))$. Similar to Theorem \ref{thm:est for gamma}, we derive the identity
$$
        \int_\Sigma (\Lambda_{\lambda} g_{j,\tau})\overline{g}_{j,\tau} d\sigma(x) dt = \int_Q \rho_\lambda(t) \partial_t w^1_{j,\lambda,\tau} \overline{w}^2_{j,\lambda,\tau} dx dt+ \int_Q \gamma_\lambda(t) \nabla w^1_{j,\lambda,\tau} \cdot \nabla \overline{w}^2_{j,\lambda,\tau}dx dt .
$$
In view of Proposition \ref{prop: w for rho case} and \ref{prop: w adj for rho case}, and by \eqref{expression for chiPhi and (chiPhi)_t}, we can rewrite it as
\begin{equation}\label{ttt}
    \int_Q \rho_\lambda(t) \varphi_\tau^2 (t)(\partial_{x_j}H(x,y_\tau))^2 dx dt = \int_\Sigma (\Lambda_\lambda g_{j,\tau})\overline{g}_{j,\tau}d\sigma(x)dt - \int_Q \gamma_\lambda(t) \nabla w^1_{j,\lambda,\tau} \cdot \nabla \overline{w}^2_{j,\lambda,\tau}dx dt - B,
    \end{equation} 
with
\begin{align*}
B =& \int_Q\rho_\lambda(t)\varphi_\tau(t) \zeta(t)(\partial_{x_j}H(x,y_\tau)-v_{j,\tau}(x))^2 dx dt\\ 
 & + \int_Q \rho_\lambda(t) \varphi_\tau^2(t) (-2v_{j,\tau}(x) \partial_{x_j} H(x,y_\tau) + v_{j,\tau}^2(x)) dx dt \\
 &+\int_Q\rho_\lambda(t)\partial_th^1_{j,\lambda,\tau}(x,t)\varphi_\tau (t)(\partial_{x_j}H(x,y_\tau)-v_{j,\tau}(x)) dx dt 
 \\
 &+ \int_Q \rho_\lambda(t) \overline{h}^2_{j,\lambda,\tau}(x,t)(\varphi_\tau(t)+\zeta(t))(\partial_{x_j}H(x,y_\tau)-v_{j,\tau}(x)) dx dt \\
 &+ \int_Q \partial_t h^1_{j,\lambda,\tau} (x,t)  \overline{h}^2_{j,\lambda,\tau}(x,t) dx dt .
 \end{align*}
Since $\varphi_\tau\zeta\equiv0$, $\varphi_\tau\in \mathcal{C}^\infty_0([0,T])$, $\overline{h}^2_{j,\lambda,\tau}\in L^2(0,T;H^1_0(\Omega))$, integration by parts in $t$ gives
\begin{align*}B =& \int_Q\rho_\lambda(t)\varphi^2_\tau(t)( -2v_{j,\tau}(x)\partial_{x_j}H(x,y_\tau)+v^2_{j,\tau}(x))dx dt\\ 
 &-\int_Qh^1_{j,\lambda,\tau}(x,t)(\rho_\lambda(t)\partial_t\varphi_\tau(t)+\partial_t\rho_\lambda(t)\varphi_\tau (t))(\partial_{x_j}H(x,y_\tau)-v_{j,\tau}(x))dx dt\\
&+ \int_Q \rho_\lambda(t) \overline{h}^2_{j,\lambda,\tau}(x,t)(\varphi_\tau(t)+\zeta(t))(\partial_{x_j}H(x,y_\tau)-v_{j,\tau}(x)) dx dt \\
&+\int_0^T\rho_\lambda(t)\left\langle \partial_th^1_{j,\lambda,\tau}(\cdot,t),\overline{h}^2_{j,\lambda,\tau}(\cdot,t)\right\rangle_{H^{-1}(\Omega),H^1_0(\Omega)}dt .\end{align*}
Next we bound each term separately. By Proposition \ref{prop: w for rho case} and \ref{prop: w adj for rho case}, we have 
\begin{align*}
   &\quad \left\lvert \int_\Sigma (\Lambda_\lambda g_{j,\tau})\overline{g}_{j,\tau} d\sigma(x) dt\right \lvert \leq \|\Lambda_\lambda g_{j,\tau}\|_{H^{-1/2,-1/2}(S\times (0,T))} \|\overline{g}_{j,\tau}\|_{H^{1/2,1/2}_0(S\times (0,T))} \\
    & \leq C\|\Lambda_\lambda\|_{op} \|g_{j,\tau}\|_{H^{1/2,1/2}_0(S\times(0,T))}(\tau^{1-r-\frac{n}{2}}+\tau^{-\frac{n}{2}}) 
  \leq C\eta (\tau^{1-r-n}+\tau^{-n}) .
\end{align*}
Then by Theorem \ref{thm:est for gamma}, we find 
\begin{align*}
   &\quad \left \lvert \int_Q \gamma_\lambda(t) \nabla w^1_{j,\lambda,\tau} \cdot \nabla \overline{w}^2_{j,\lambda,\tau} dx dt\right \lvert  \leq C\eta  \|w^1_{j,\lambda,\tau}\|_{L^2(0,T;H^1(\Omega))} \|\overline{w}^2_{j,\lambda,\tau}\|_{L^2(0,T;H^1(\Omega))} \\
    & \leq C\eta \tau^{-\frac{n}{2}} (\tau^{-\frac{n}{2}} + \tau^{-r} + \tau^{2-r-\frac{n}{2}})
    \leq C \eta (\tau^{-n} + \tau^{2-r-n}) .
\end{align*}
Lastly, by Propositions \ref{prop: w for rho case} and \ref{prop: w adj for rho case} and repeating the argument for Theorem \ref{thm:est for gamma}, we obtain
$$|B| \leq C(\tau^{1-\frac{n}{2}}+\tau^{1-r-\frac{n}{2}}+\tau^{3-r-n}+\tau^{-r}+\tau^{4-r-n})  \leq C(\tau^{1-r-\frac{n}{2}}+\tau^{3-r-n}) . $$
Hence, combining the preceding estimates gives
$$ \begin{aligned}&\left\lvert \int_0^T \rho_\lambda(t) \varphi_\tau^2(t)dt \right \lvert \left\lvert \int_\Omega (\partial_{x_j} H(x,y_\tau))^2dx \right\lvert 
\leq C\left[\eta (\tau^{-n}+\tau^{1-r-n}) + \tau^{1-r-\frac{n}{2}}+\tau^{3-r-n} \right] ,\end{aligned}
$$
and summing the expression over $j=1,\ldots,n$ leads to
\begin{align*}&\left\lvert \int_0^T \rho_\lambda(t) \varphi_\tau^2(t)dt  \right \lvert \left\lvert \int_\Omega |\nabla H(x,y_\tau)|^2dx \right\lvert
\leq C\left[\eta (\tau^{-n}+\tau^{1-r-n}) +\tau^{1-r-\frac{n}{2}}+ \tau^{3-r-n} \right] .
\end{align*}
Using the estimate \eqref{eqn:lower bound for gradH} and dividing both sides of the last inequality by $\tau^{2-n}$, we get
\begin{equation}\label{esti4}
     \left\lvert \int_0^T \rho_\lambda(t) \varphi_\tau^2(t)dt \right \lvert \leq C(\eta(\tau^{-2}+\tau^{-1-r})+\tau^{-1-r+\frac{n}{2}} +\tau^{1-r}) .
\end{equation}
Since $\|\varphi_\tau\|_{L^2(\R)}=1$ and supp$(\varphi_\tau)\subset(0,T)$, applying \eqref{cond} and mean value theorem gives
\begin{align*}\left\lvert \int_0^T \rho_\lambda(t) \varphi_\tau^2(t) dt \right \lvert&\geq |\rho_\lambda(t_0)|-\left\lvert \int_0^T \rho_\lambda(t) \varphi_\tau^2(t)dt -\rho_\lambda(t_0)\right\lvert\\
&\geq |\rho_\lambda(t_0)|- \int_\R |\rho_\lambda(s\tau^r+t_0)-\rho_\lambda(t_0)| \varphi^2(s)ds \\
&\geq |\rho_\lambda(t_0)|- 2\bigg(\sup_{\mu\in[-R,R]}\kappa(\mu)\bigg) \tau^r \int_{-1}^1 s \varphi^2(s)ds\geq  |\rho_\lambda(t_0)|-C\tau^r.
\end{align*}
Combining this with \eqref{esti4} yields
$|\rho_\lambda(t_0)|\leq C(\tau^r + 
\eta(\tau^{-2}+\tau^{-1-r}) + \tau^{-1-r+\frac{n}{2}}+ \tau^{1-r} )$,
and by fixing $r = \frac{1}{4}$, we  obtain
$$
 |\rho_\lambda(t_0)|\leq C(\tau^{\frac{1}{4}}+\eta\tau^{-2}) .   
$$
Using this estimate, we shall prove \eqref{estt} for all possible values of $\eta\geq0$.\\
For $\eta=0$, by sending $\tau\to0$, we get $|\rho_\lambda(t_0)|=0$ which, combined with \eqref{cond3}, clearly implies \eqref{estt}. For $\eta\in \left(0,\tau_0^{4/9}\right)$, we can choose $\tau=\eta^{4/9}$ and deduce 
$$  |\rho_\lambda(t_0)|\leq C\eta^{\frac{1}{9}} , $$
which, combined with \eqref{cond3}, clearly implies \eqref{estt}. Moreover, for $\eta\geq\tau_0^{4/9}$, we have
$$   |\rho_\lambda(t_0)|\leq     |\rho_\lambda(t_0)|\frac{\eta^{\frac{1}{9}}}{\tau_0^{\frac{4}{81}}}\leq C\eta^{\frac{1}{9}}. $$
Thus \eqref{estt} holds for all $\eta\geq0$. This completes the proof of the theorem.

\end{proof}

\bibliographystyle{siam} 
\bibliography{References}

\end{document}